\documentclass[11pt]{article}
\usepackage{amssymb,amsmath,amsthm}




\newtheorem{theorem}{Theorem}[section]
\newtheorem{lemma}[theorem]{Lemma}
\newtheorem{proposition}[theorem]{Proposition}
\newtheorem{corollary}[theorem]{Corollary}
\newtheorem{question}[theorem]{Question}

\theoremstyle{definition}
\newtheorem{definition}[theorem]{Definition}
\newtheorem{example}[theorem]{Example}

\theoremstyle{remark}

\numberwithin{equation}{section}

\newcommand{\quotgp}[2]{\mathchoice
   {\raise.5ex\hbox{$(#1)$}\hbox{\Large$/$}\lower.25ex\hbox{$(#2)$}}
   {\raise.25ex\hbox{\small$(#1)$}/\lower.25ex\hbox{\small$(#2)$}}
  {\raise.15ex\hbox{\small$\scriptstyle(#1)$}/
  \lower.15ex\hbox{\small$\scriptstyle(#2)$}}
{\raise.1ex\hbox{\small$\scriptscriptstyle(#1)$}/ 
\lower.1ex\hbox{\small$\scriptscriptstyle(#2)$}}
  }




\newcommand{\RR}{{\mathbb R}}			
\newcommand{\ZZ}{{\mathbb Z}}			
\newcommand{\NN}{{\mathbb N}}			
\newcommand{\QQ}{{\mathbb Q}}			
\newcommand{\TT}{{\mathbb T}}
\newcommand{\G}{\Gamma}
\newcommand{\g}{ {\mathfrak g} }

\newcommand{\mcal}{\mathcal}			

\newcommand{\mB}{{\mcal B}}

\newcommand{\mA}{{\mcal A}}

\newcommand{\sm}{\setminus}			


\newcommand{\e}{\varepsilon}

\renewcommand{\and}{\text{ and }}

\newcommand{\nrml}{\triangleleft}

\newcommand{\EQ}{\begin{eqnarray*}}
\newcommand{\EQE}{\end{eqnarray*}}

\newcommand{\ENUM}{\begin{enumerate}}
\newcommand{\ENUME}{\end{enumerate}}

\title{Equidistribution of singular measures on nilmanifolds and skew products}
\author{Fabrizio Polo}

\begin{document}

\maketitle

\begin{abstract}
We prove that for a minimal rotation $T$ on a 2-step nilmanifold and any measure $\mu$, the push-forward $T^n_\star \mu$ of $\mu$ under $T^n$ tends toward Haar measure if and only if $\mu$ projects to Haar measure on the maximal torus factor.  For an arbitrary nilmanifold we get the same result along a sequence of uniform density $1.$  These results strengthen Parry's result \cite{parry} that such systems are uniquely ergodic. Extending the work of Furstenberg \cite{furstenberg}, we get the same result for a large class of iterated skew products. Additionally we prove a multiplicative ergodic theorem for functions taking values in the upper unipotent group.  Finally, we characterize limits of $T^n_\star \mu$ for some skew product transformations with expansive fibers.  All results are presented in terms of twisting and weak twisting, properties which strengthen unique ergodicity in a way analogous to how mixing and weak mixing strengthen ergodicity for measure preserving systems.
\end{abstract}


\section{Introduction}

By a {\it topological dynamical system} we shall mean a compact metric space $X$ equipped with a homeomorphism $T:X \to X$ (all systems will be assumed invertible.)  We will denote such a system by a pair $(X,T).$  Given two systems, $(X,T)$ and $(Y,S)$, a {\it factor map} $\Phi:(X,T) \to (Y,S)$ is a surjective continuous map $\Phi:X \to Y$ such that $\Phi(T(x)) = S( \Phi(y)).$  Consider the following easy rephrasing of well known results from ergodic theory.  (For the basic definitions of ergodic theory, see \cite{walters}.)
\begin{proposition} \label{mixProp}
 Let $(X, T)$ be a topological dynamical system and let $m$ be an invariant probability measure on $X.$
\begin{enumerate}
\item $m$ is ergodic if and only if for all $\mu,$ absolutely continuous with respect to $m,$ and for all $f \in C(X)$
\[ \lim_{N \to \infty} \frac{1}{N} \sum_{n =0}^{N-1} \int_X f \circ T^n d \mu - \int_X f dm = 0. \]
\item $m$ is weakly mixing if and only if for all $\mu,$ absolutely continuous with respect to $m,$ and for all $f \in C(X)$
\[ \lim_{N \to \infty} \frac{1}{N} \sum_{n =0}^{N-1} \left| \int_X f \circ T^n d \mu - \int_X f dm \right| = 0. \]
\item $m$ is mixing if and only if for all $\mu,$ absolutely continuous with respect to $m,$ and for all $f \in C(X)$
\[ \lim_{n \to \infty} \int_X f \circ T^n d \mu - \int_X f dm = 0. \]
\end{enumerate}
\end{proposition}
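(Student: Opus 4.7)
The plan is to reduce each of the three statements to the standard Hilbert-space characterizations of ergodicity, weak mixing, and mixing on $L^2(m)$. Given $\mu$ absolutely continuous with respect to $m$, the Radon--Nikodym theorem produces a density $g := d\mu/dm \in L^1(m)$ with $g \geq 0$ and $\int g\, dm = 1$, and the quantity inside each limit becomes
\[
D_n(f,g) := \int_X (f \circ T^n)\, g \, dm - \left(\int_X f\, dm\right)\left(\int_X g\, dm\right).
\]
The classical $L^2$ characterizations say precisely that, for all $f, g \in L^2(m)$, the sequences $N^{-1}\sum_{n=0}^{N-1} D_n(f,g)$, $N^{-1}\sum_{n=0}^{N-1}|D_n(f,g)|$, and $D_n(f,g)$ tend to zero, respectively for ergodicity, weak mixing, and mixing. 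The content of the proposition is to extend these equivalences from $L^2$ data to the pair $f \in C(X)$, $g \in L^1(m)$.

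For the forward directions, fix $f \in C(X)$ and a density $g \in L^1(m)$, and approximate $g$ in $L^1$-norm by functions $g_k \in L^\infty(m)$. Since $\|f \circ T^n\|_\infty \leq \|f\|_\infty$ and $m$ is a probability measure, one has $|D_n(f,g) - D_n(f,g_k)| \leq 2\|f\|_\infty \|g - g_k\|_1$ uniformly in $n$. This uniform control passes through both Ces\`aro averages and pointwise limits in $n$, so applying the $L^2$ characterization to $f$ and each $g_k \in L^\infty(m) \subset L^2(m)$ yields the desired conclusion in each case.

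For the converse of (1), any $T$-invariant Borel set $A$ with $0 < m(A) < 1$ gives the density $m(A)^{-1}\mathbf{1}_A$ for which $\int f \circ T^n\, d\mu = m(A)^{-1}\int_A f \, dm$ is constant in $n$ and differs from $\int f \, dm$ for any continuous $f$ sufficiently close to $\mathbf{1}_A$. For the converses of (2) and (3), if the $L^2$ characterization fails for some $f_0, g_0 \in L^2(m)$, then because $U_T f := f \circ T$ is an $L^2$-isometry, $L^2$-approximation of $f_0$ by an element of $C(X)$ and of $g_0$ by an element of $L^\infty(m)$ perturbs $D_n$ uniformly in $n$, and the failure persists for some $f \in C(X)$ and $g \in L^\infty(m)$. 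Since $D_n(f, g+c) = D_n(f,g)$ for any constant $c$ (by $T$-invariance of $m$, which kills the constant contribution), replacing $g$ with $(g+c)/\int(g+c)\,dm$ for large enough $c$ turns $g$ into a probability density while scaling $D_n$ by a positive constant, so the same failure is witnessed within the class of $\mu \ll m$.

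The main obstacle is the $L^1$-versus-$L^2$ mismatch between the hypothesis on $\mu$ and the natural Hilbert-space form of the three characterizations; once the uniform bounded approximation is in place, the proposition reduces to bookkeeping around the mean ergodic, weak mixing, and mixing theorems.
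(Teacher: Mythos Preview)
Your argument is correct and follows essentially the same route as the paper. For the forward directions both you and the paper approximate the $L^1$ density by bounded functions (the paper uses the explicit truncation $\psi_M=\min(\psi,M)$) and then invoke the $L^2$ characterizations, with boundedness of $f$ giving the uniform-in-$n$ control needed to pass to the limit. For the converses the paper simply says ``$L^1(m)\supset L^2(m)$ and $C(X)$ is dense in $L^2(m)$, so the Hilbertian definitions follow immediately''; your version is more explicit, and in particular the shift-by-a-constant trick $g\mapsto (g+c)/\int(g+c)\,dm$ is exactly what is needed to pass from arbitrary $g\in L^\infty$ back to genuine probability densities, a step the paper leaves to the reader.
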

\begin{proof}
Let $d \mu = \psi dm$ where $\psi \in L^1(m)$ and let $\psi_M = \min( \psi, M).$  Then $f,\psi_M \in L^2(m),$ so in statements 1,2, and 3, the Hilbertian definitions of ergodicity, weak mixing, and mixing dictate that the appropriate limits hold when
\[ \int f \circ T^n d \mu \text{ \ is replaced by } \int f \circ T^n \psi_M dm = \left< f \circ T^n, \psi_m \right>. \]
Now we use the fact that $f$ is bounded to pass to a limit in $M$ and derive the same results for $\int f \circ T^n d \mu.$

For the converses, we observe $L^1(m)$ contains $L^2(m)$ and $C(X)$ is dense in $L^2(m).$  The Hilbertian definitions follow immediately from the limits above.
\end{proof}
The goal of this paper is to study similar averages and limits where $\mu$ has been replaced by some probability measure that is singular with respect to $m.$  More specifically, if $(X,T)$ is a topological system with unique invariant measure $m,$ we study those $\mu$ for which we could expect the limits in Proposition \ref{mixProp} to hold.

In Proposition \ref{mixProp}, using test functions $f$ from $C(X)$ seems unnatural since the topology of $X$ is irrelevant to the usual definitions of ergodicity, weak mixing, and mixing.  One usually makes similar statements in $L^2(m)$ where the integral $\int f d\mu$ against an absolutely continuous measure $d \mu = \varphi dm$ is replaced by an inner product: $\int f d \mu = \int f \varphi dm = \left< f, \bar \varphi \right>.$  However, when one wishes to study singular $\mu,$ there is no obvious analogue of $\varphi$, so $L^2(m)$ is insufficient.

One can rephrase Proposition \ref{mixProp} in terms the space $P(X)$ of Borel probability measures on $X.$  Considered as a subspace of $C(X)^\star$, $P(X)$ may be equipped with the weak$^\star$ topology.  In particular Proposition \ref{mixProp} (3) can be rewritten: $m$ is mixing if and only if for all $\mu$ absolutely continuous with respect to $m, \lim_{n \to \infty} T^n_\star \mu = m,$ where $T^n_\star \mu$ is the push-forward measure defined by $\int f dT^n_\star \mu = \int f \circ T^n d \mu.$

Before stating our results regarding singular $\mu,$ we recall some preliminaries.  A {\it nilmanifold } is a space $X$ of the form $G / \G$ where $G$ is a connected nilpotent Lie group and $\Gamma$ is a cocompact lattice.  
That is, $\G$ is a discrete subgroup of $G$ such that the quotient space $G/\G$ is compact.  $G$ acts on $X$ by left multiplication.
Such groups $G$ admit a bi-invariant {\it Haar measure } $m.$  By identifying $X$ with a fundamental domain for the left action of $G$ on itself, we can equip $X$ with a finite measure.  Since $m$ is unique up to scaling, we may assume the measure on $X$ is a probability measure.  For simplicity we write $m$ for the measure $G$ on $X$ and call both Haar measures.  The left action of $G$ on $X$ preserves $m.$  We say a sequence of measures $\mu_n$ on $X$ {\it equidistributes } if for all $f \in C(X),$
\[ \lim_{n \to \infty} \int_X f \mu_n = \int f d m. \]

Write $G_0 = G$ and $G_{n+1} = [G, G_n].$  Then $G_{n+1} \nrml G_n,$ and $G_n / G_{n+1}$ is abelian and connected.  The sequence of subgroups $G_n$ is called the {\it lower central series} of $G.$  Since $G$ is nilpotent there exists some $d$ such that $G_d = \{ 1 \}.$  The least such $d$ the called the {\it degree of nilpotency } of $G$ (and $X$.)  The lower central series of $G$ gives us a sequence of quotients $X_n := G / G_n \G$ of $X.$  The fiber above each point in the factor map $X_{n+1} \to X_n$ is homeomorphic to
\[ G_n / G_{n+1} \G \cong (G_n / G_{n+1} ) / ( (\G \cap G_n ) G_{n+1} / G_{n+1} ), \]
which is a compact quotient of $G_n / G_{n+1}$ and hence a torus.  In fact $X_{n+1}$ is a torus bundle over $X_n.$  Therefore $X$ is derived from the one point space $X_d$ by repeatedly taking circle bundles.
We call $X_1$ the {\it maximal torus factor} of $X.$

Let $J$ be a set of integers.  When the following limits exist
\[ d(J) := \lim_{N \to \infty} \frac{\# ( J \cap [1,N]) }{ N } \text{ \ and \ }  d^\star (J) := \lim_{N-M \to \infty} \frac{\# ( J \cap [M,N]) }{ N-M+1 } \]
we refer to their values as the {\it density} and {\it uniform density} of $J.$  If $d^\star (J)$ is well defined then so is $d(J)$ and the two coincide.

\begin{theorem} \label{nilThm}
Let $X = G / \G$ be a nilmanifold.  Fix $u \in G$ and let $T(g\G) = ug \G.$  If the system $(X, T)$ is transitive (i.e. has a dense orbit) then for any probability measure $\mu$ on $X$ that projects to Haar measure on the maximal torus factor, there exists a subset $J \subset \ZZ$ of uniform density zero such that for any $f \in C(X)$
\[ \lim_{n \to \infty, \ n \notin J } \int_X f \circ T^n d\mu = \int_X f dm. \]
In other words, the sequence $\{ T^n_\star \mu : n \notin J \}$ equidistributes.

Furthermore, if one fixes $f$ and lets $\mu$ range then the following limit converges uniformly in $\mu.$
\[ \lim_{N-M \to \infty} \frac{1}{N-M} \sum_{i=M}^{N-1} \left| \int f \circ T^i d \mu - \int f dm \right| = 0. \]
\end{theorem}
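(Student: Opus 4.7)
The plan is to induct on the degree of nilpotency $d$. The base case $d=1$ is vacuous, since then $X=X_1$ is a torus and the hypothesis that $\mu$ projects to Haar on $X_1$ forces $\mu = m$. For the inductive step, consider the factor $\pi: X \to Y := X_{d-1}$; this is a torus bundle with fiber the central torus $K := G_{d-1}/(G_{d-1}\cap \Gamma)$, and $T$ descends to a minimal rotation $S$ on the nilmanifold $Y$ of degree $d-1$. Since $G_{d-1}$ is central in $G$, the group $K$ acts on $X$ commuting with $T$. Using Peter--Weyl for $K$, any $f\in C(X)$ is uniformly approximable by a finite sum $f = f_0 + \sum_{\chi\neq 0}f_\chi$ of $\chi$-isotypic functions (satisfying $f_\chi(z\cdot x) = \chi(z)f_\chi(x)$), so it suffices to prove the Cesàro-$L^1$ convergence for each summand, uniformly in $\mu$. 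For the trivial character, $f_0$ descends to $\tilde f_0 \in C(Y)$ and $\int f_0\circ T^n\,d\mu = \int \tilde f_0\circ S^n\,d\nu$ where $\nu := \pi_\star\mu$ still projects to Haar on $X_1$ (a factor of $Y$), so the inductive hypothesis closes this case with uniformity inherited in $\mu$.

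The main work is the nontrivial characters. Choose a Borel section $s:Y\to X$ and write $T$ as a skew product $T^n(s(y)\cdot k) = s(S^n y)\cdot(k + c_n(y))$, where $c_n: Y\to K$ is the Birkhoff cocycle of some measurable $c:Y\to K$. Disintegrating $\mu = \int \mu_y\,d\nu(y)$ and setting $\psi(y) := \hat\mu_y(\chi)$ (so $|\psi|\leq 1$), the quantity of interest reads
\[
a_n := \int f_\chi\circ T^n\,d\mu = \int_Y \chi(c_n(y))\,\tilde f_\chi(S^n y)\,\psi(y)\,d\nu(y).
\]
Since $(a_n)$ is bounded, $\frac{1}{N-M}\sum_n |a_n|\to 0$ is equivalent to $\frac{1}{N-M}\sum_n |a_n|^2\to 0$. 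Expanding $|a_n|^2$ gives a double integral over $Y\times Y$ against $\psi\otimes\bar\psi\,d\nu\otimes d\nu$ whose integrand contains $\chi(c_n(y)-c_n(y'))$; since $K$ is central and hence abelian, $c_n(y)-c_n(y')$ is precisely the Birkhoff sum under $(S\times S)$ of $c(y)-c(y')$. I would then apply van der Corput in $n$, reducing the problem to the correlations $\frac{1}{N}\sum_n a_n\overline{a_{n+h}}$; the cocycle identity $c_{n+h}=c_n+c_h\circ S^n$ converts these into Birkhoff averages on $(Y\times Y, S\times S)$ -- a rotation on a nilmanifold of degree $d-1$ -- of continuous integrands, against the measure $\nu\otimes\nu$. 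Because $\nu\otimes\nu$ projects to Haar on the maximal torus $X_1\times X_1$ of $Y\times Y$, the inductive hypothesis applies (after the ergodic decomposition of $S\times S$ into sub-nilmanifold orbit closures, on each of which unique ergodicity holds by Parry's theorem).

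The principal obstacle is maintaining \emph{uniformity in $\mu$} throughout this reduction, since the fiber Fourier coefficients $\psi$ are essentially arbitrary elements of the unit ball in $L^\infty(\nu)$ and cannot be assumed smooth or continuous. The essential global control comes from Parseval's inequality $\int |\psi|^2\,d\nu \leq 1$, which, combined with the fact that the Fourier mode of $\chi\circ c_n$ grows without bound in $n$ (a consequence of the minimality of $(X,T)$, which forces $\chi\circ c$ to be cohomologically non-trivial for each $\chi\neq 0$), produces quantitative Cesàro decay independent of $\psi$ once tracked carefully through the van der Corput step. Promoting the resulting uniform Cesàro-$L^1$ decay to convergence along a uniform-density-one sequence is then the standard observation that the set of $n$ where $|a_n|>\varepsilon$ has density at most $\varepsilon^{-2}\cdot\frac{1}{N-M}\sum|a_n|^2$.
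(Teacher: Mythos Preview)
Your strategy is genuinely different from the paper's, and as written it has a real gap.

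The paper does \emph{not} induct on the step directly. It first proves a much sharper ``one--step'' result (Theorem~\ref{nilimprovementThm}): if $\mu$ already projects to Haar on $X_{l-1}$, then $T^n_\star\mu\to m$ outright, with no exceptional set. Applying this to each factor $X_{i+1}$ shows that for $\mu\in P_i$, every weak--$\star$ limit of $T^n_\star\mu$ lies in $P_{i+1}$. An abstract argument on invariant measures of the system $(P_1,T_\star)$ (Corollary~\ref{nilweaktwistingCor}) then forces $\delta_m$ to be the only invariant measure there, i.e.\ the system is weakly twisting, and Corollary~\ref{wktwistingCor} finishes. In particular, the uniformity in $\mu$ is not tracked by hand at all; it drops out of unique ergodicity of $(P_1,T_\star)$.

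The place your plan breaks is the appeal to the inductive hypothesis on $(Y\times Y,S\times S)$. That system is never transitive (the diagonal is invariant), so Theorem~\ref{nilThm} does not apply to it. You propose to pass to the ergodic decomposition into sub--nilmanifold orbit closures, but then the maximal torus factor of each component is no longer $X_1\times X_1$, and there is no reason the conditional measures of $\nu\otimes\nu$ on those components project to Haar on \emph{their} torus factors, which is exactly the hypothesis you would need. Two further issues: your cocycle $c$ comes from a Borel section $s$, so $\chi\circ c$ is only measurable, and the ``continuous integrands'' you promise are not continuous; and the van~der~Corput step as you describe it does not actually reduce $\frac{1}{N}\sum|a_n|^2$ to a Birkhoff average of a fixed function, because after expanding you still carry the multiplicative cocycle $\chi(c_n(y)-c_n(y'))$, which depends on $n$. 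Finally, for a single $\chi$ the Parseval bound you invoke is just the trivial $|\psi|\le 1$, and since $\nu$ ranges over all of $P_1(Y)$ and $\psi$ over the unit ball of $L^\infty(\nu)$, there is no compactness to make the uniformity in $\mu$ automatic; the paper's route via unique ergodicity of $(P_1,T_\star)$ is precisely what supplies that uniformity without any such bookkeeping.
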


When we say $\mu$ projects to Haar measure on the maximal torus factor we mean that $\pi_\star \mu$ is Haar measure on $X_1,$ where $\pi: X \to X_1$ is the obvious factor map.

\begin{theorem} \label{nil2Thm}
Let $(X, T)$ and $\mu$ be as in Theorem \ref{nilThm}.  If we assume that $G$ is a 2-step nilpotent group, then the
limit holds with no exceptional set $J$:
\[ \lim_{n \to \infty} \int_X f \circ T^n d\mu = \int_X f dm. \]
In other words, $T^n_\star \mu$ equidistributes.
\end{theorem}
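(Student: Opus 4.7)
The strategy is to use the skew-product structure of the nilrotation in the 2-step case, combined with Fourier analysis along the central torus fiber, to reduce the assertion to a Riemann--Lebesgue argument on the torus $X_1$. The 2-step hypothesis is essential: it forces the fiber twist contributed by $T^n$ to be \emph{linear} in $n$ (times a $y$-dependent phase), not polynomial of higher degree, so that Riemann--Lebesgue applies along the full sequence $n \to \infty$ with no exceptional set.

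First, decompose $f \in C(X)$ along characters of the fiber torus $F = G_1/(G_1 \cap \G)$, which acts on $X$ by left translation and commutes with $T$ (since $G_1$ is central under the 2-step hypothesis). Writing $f = \sum_\chi f_\chi$, it suffices by approximation to handle each $f_\chi$ separately. The trivial character contribution $f_1$ descends to a continuous function on $X_1$, and since $\pi_\star\mu = m_{X_1}$ is $T$-invariant on the base, $\int f_1 \circ T^n d\mu = \int f_1\,dm$ holds exactly for every $n$.

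For nontrivial $\chi$, pick a measurable section $\sigma\colon X_1 \to X$ and define the cocycle $s\colon X_1 \to F$ by $u\sigma(y) = \sigma(T_1 y)s(y)$, where $T_1$ denotes the induced rotation on $X_1$; iterating, $T^n(\sigma(y)t) = \sigma(T_1^n y)(t + S_n(y))$ with $S_n = \sum_{i=0}^{n-1} s \circ T_1^i$. A direct calculation using 2-step nilpotency (commutators are central, so $s(T_1^i y)$ is affine in $y$ and $i$, and summing in $i$ yields a $y$-dependent part linear in $n$) gives
\[ \chi(S_n(y)) = c_n\, e^{2\pi i n\lambda_\chi(y)}, \]
where $|c_n|=1$ and $\lambda_\chi\colon X_1 \to \RR/\ZZ$ is a nontrivial affine function; nontriviality follows from minimality of $(X,T)$ combined with nontriviality of $\chi$. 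Disintegrating $\mu$ over the base then produces
\[ \int f_\chi\circ T^n d\mu = c_n \int_{X_1} G_n(y)\, e^{2\pi i n\lambda_\chi(y)}\, dm_{X_1}(y), \]
where $G_n$ is a measurable function uniformly bounded by $\|f\|_\infty$.

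The right-hand side is a Fourier coefficient of a bounded function at a frequency escaping to infinity. A standard truncation (approximating the continuous base component of $f_\chi$ by trigonometric polynomials) decouples the $n$-dependence of $G_n$; then Riemann--Lebesgue on the universal cover $\RR^{\dim X_1}$, needed because the frequency $n \cdot (\text{linear part of } \lambda_\chi)$ need not lie in the integer dual of $X_1$, gives the desired convergence to zero. The principal obstacle is the explicit calculation of $\chi(S_n(y))$ in arbitrary 2-step nilmanifold coordinates via the commutator structure; this is precisely the step that breaks down for higher-step $G$, where polynomial-in-$n$ phases of higher degree arise and force an exceptional set $J$.
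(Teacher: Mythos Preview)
Your approach differs substantially from the paper's. The paper deduces Theorem~\ref{nil2Thm} in one line from Theorem~\ref{nilimprovementThm}: for a $2$-step group $l=2$, so $X_{l-1}=X_1$ and the hypothesis there is exactly the hypothesis here. Theorem~\ref{nilimprovementThm} is not proved by Fourier analysis on $\mu$; rather, one approximates the singular $\mu$ by an absolutely continuous $\nu$ via small perturbations in the central (hence isometric) $G_{l-1}$-direction, so that $\|T^n_\star\mu-T^n_\star\nu\|_\star$ stays small for all $n$, and then handles $\nu$ either by a geometric shearing argument or by invoking Green's Lebesgue-spectrum theorem.

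Your direct character-decomposition strategy is reasonable, but the key computational claim is wrong as stated. You assert $\chi(S_n(y)) = c_n\, e^{2\pi i n\lambda_\chi(y)}$ with $\lambda_\chi$ affine, because ``$s(T_1^i y)$ is affine in $y$ and $i$.'' For a measurable section $\sigma$ of a nontrivial torus bundle this fails: $\sigma$ is necessarily discontinuous and $s$ is only \emph{piecewise} affine, with pieces that move with $i$. Concretely, on the Heisenberg nilmanifold with the standard section one computes
\[
S_n(y_1,y_2)\;\equiv\; nc+\tbinom{n}{2}ab+na\,y_2\;-\;\{y_1+na\}\,\lfloor y_2+nb\rfloor \pmod{\ZZ},
\]
and the last term is neither independent of $y$ nor of the form $n\cdot(\text{affine in }y)$. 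If instead you pass to the universal cover of $X_1$ to make $s$ genuinely affine (as your remark about $\RR^{\dim X_1}$ suggests), then $h=f_\chi\circ\sigma$ is no longer periodic---it transforms by a nontrivial character under the deck group---so approximating it by trigonometric polynomials on $X_1$ no longer makes sense, and your ``decoupling of the $n$-dependence of $G_n$'' step breaks down. Either way the integrand carries additional $n$-dependent oscillatory factors that a bare Riemann--Lebesgue lemma does not control. The route can be completed, but doing so amounts to proving Lebesgue spectrum on each nontrivial $\chi$-isotypic component, which is exactly the input the paper's second proof of Theorem~\ref{nilimprovementThm} takes as a black box.
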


As will be discussed in the next section, the conclusions of Theorems \ref{nilThm} and \ref{nil2Thm} imply unique ergodicity.  So, these theorems strengthen the result of Parry \cite{parry} which asserts that such systems are uniquely ergodic.

While nilmanifolds (spaces $G/\G$ as in Theorem \ref{nilThm}) are not usually Tori, their topologies are locally similar because, as mentioned above, nilmanifolds can be constructed by repeatedly taking circle bundles.  Combining techniques used in the proof of Theorem \ref{nilThm} with a multiplicative ergodic theorem (Theorem \ref{unipotentcocycleThm}) for unipotent valued cocycles, we get

\begin{theorem} \label{skewThm}
Let $X=\TT^d$ and define $T:X \to X$ by 
\[ T(x_1, \dots, x_d) = (x_1+ \alpha, x_2+f_1(x_1), \dots x_d + f_{d-1}(x_1, \dots, x_{d-1})), \]
where $\alpha$ is irrational and each $f_k(x_1, \dots, x_{k-1}, \cdot) :\TT \to \TT$ is Lipschitz and homotopically non-trivial.
For any probability measure $\mu$ on $X$ that projects to Lebesgue measure on the first coordinate there exists a subset $J \subset \ZZ$ of uniform density zero such that for any $f \in C(X)$
\[ \lim_{n \to \infty, \ n \notin J } \int_X f \circ T^n d\mu = \int_X f dm. \]
In other words, the sequence $\{ T^n_\star \mu : n \notin J \}$ equidistributes.

Furthermore, if one fixes $f$ and lets $\mu$ range then the following limit converges uniformly in $\mu.$
\[ \lim_{N-M \to \infty} \frac{1}{N-M} \sum_{i=M}^{N-1} \left| \int f \circ T^i d \mu - \int f dm \right| = 0. \]
\end{theorem}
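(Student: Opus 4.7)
The plan is to follow the structure of the proof of Theorem \ref{nilThm}, with the iterated skew-product structure playing the role of the lower central series of a nilmanifold. By Stone--Weierstrass it suffices to establish both displayed limits for test functions $f$ that are characters of $\TT^d$. Writing
\[ T^n(x) = \bigl(x_1+n\alpha,\ x_2 + S_n^{(1)}(x_1),\ \dots,\ x_d + S_n^{(d-1)}(x_1,\dots,x_{d-1})\bigr), \]
each $S_n^{(j)}$ is a Birkhoff-type cocycle over the dynamics on the first $j$ coordinates, built additively from $f_1,\ldots,f_j$. I induct on $d$. The case $d=1$ is immediate, since $T$ is then rotation by $\alpha$ and $\mu$ is Lebesgue measure on $\TT$ by hypothesis, so $T^n_\star \mu = m$ identically.

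For the inductive step, fix a character $f(x) = \exp\bigl(2\pi i \sum_j k_j x_j\bigr)$. If $k_d = 0$, then $f\circ T^n$ depends only on the first $d-1$ coordinates, and the projection of $\mu$ to $\TT^{d-1}$ still satisfies the hypotheses of the theorem at dimension $d-1$, so the inductive hypothesis applies directly. If $k_d \neq 0$, the quantity to control becomes
\[ \int \exp\bigl(2\pi i\, k_d x_d\bigr) \cdot \exp\!\Bigl(2\pi i \sum_{j<d} k_j (T^n x)_j\Bigr) \cdot \exp\bigl(2\pi i\, k_d S_n^{(d-1)}(x_1,\dots,x_{d-1})\bigr)\, d\mu(x), \]
and the factor $\exp(2\pi i k_d x_d)$ may be absorbed into $\mu$ as a bounded complex weight, leaving a twisted Birkhoff average over the lower-dimensional dynamics, modulated by the oscillatory factor $\exp(2\pi i k_d S_n^{(d-1)})$.

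The main obstacle is showing that the Cesaro absolute averages of these twisted integrals tend to zero uniformly over admissible $\mu$. Here the Lipschitz and homotopically non-trivial hypotheses on each $f_j(x_1,\ldots,x_{j-1},\cdot)$ become essential: non-triviality forces the fiber cocycles $S_n^{(j)}$ to have nonzero winding degree, which rules out persistent resonance in the twist. The collection $(S_n^{(1)},\dots,S_n^{(d-1)})$ assembles naturally into an upper-unipotent valued multiplicative cocycle, and Theorem \ref{unipotentcocycleThm} produces the asymptotic equidistribution needed in the top fiber, uniformly in the character $\chi$ on $\TT^{d-1}$ and in the projected measure. This uniform-in-$\mu$ Cesaro-$L^1$ bound is precisely the second displayed conclusion of the theorem for the chosen character.

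Once this estimate holds uniformly for each character, a Koopman--von Neumann type diagonal extraction, carried out over a countable basis of characters and using the uniformity in $\mu$, produces a single set $J\subseteq \ZZ$ of uniform density zero along whose complement $\int f\circ T^n \, d\mu \to \int f \, dm$ for every $f\in C(X)$ and every admissible $\mu$. This yields both parts of the theorem. The delicate point throughout is that $f_j$ is merely Lipschitz rather than smooth, so one cannot simply quote a nilmanifold theorem; it is Theorem \ref{unipotentcocycleThm} that supplies the analytic input required to handle cocycles of this low regularity.
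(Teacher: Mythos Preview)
Your overall plan (induct on $d$, reduce to characters, isolate the top coordinate) is a reasonable strategy, but the crucial analytic step is not carried out, and your invocation of Theorem \ref{unipotentcocycleThm} is not correct as stated. The additive $\TT$-valued cocycles $S_n^{(j)}$ do \emph{not} ``assemble naturally into an upper-unipotent valued multiplicative cocycle'': they are scalar iterated sums, not matrix products, and Theorem \ref{unipotentcocycleThm} says nothing about them directly. What \emph{is} a $U$-valued multiplicative cocycle is the Jacobian $D_xT^n = D_{T^{n-1}x}T\cdots D_xT$, and it is to this object that Theorem \ref{unipotentcocycleThm} applies; its output is that the $(i,j)$ entry of $D_xT^n$ grows like $n^{j-i}$ times a nonzero constant (the product of the degrees of the relevant $f_k$). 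You still owe the reader the step from ``the phase $S_n^{(d-1)}$ has derivative of size $\sim n^{d-1}$'' to ``the twisted integral against an arbitrary bounded measurable weight on $\TT^{d-1}$ tends to $0$ in uniform Ces\`aro-$L^1$.'' For $\mu$ that merely projects to Lebesgue on the first coordinate, the weight absorbing $e^{2\pi i k_d x_d}$ is only $L^\infty$, so a van der Corput or stationary-phase argument does not apply directly; some approximation of $\mu$ is needed, and you have not supplied one.

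For comparison, the paper does not argue via characters at all. It proves a structural improvement theorem (Theorem \ref{skewimprovementThm}): if $\mu$ projects to an absolutely continuous measure on the first $d-1$ coordinates, then every weak$^\star$ limit of $T^n_\star\mu$ is invariant under rotation in the last coordinate. The mechanism is geometric shearing: one approximates $\mu$ by a measure $\nu$ with continuous densities on flat $(d-1)$-dimensional slices, then uses Theorem \ref{unipotentcocycleThm} applied to the Jacobian cocycle to show that a translation by $\delta\sim n^{-(d-1)}$ in the $x_1$-direction is carried by $T^n$ to (approximately) a fixed rotation in the $x_d$-direction. This gives near-invariance of $T^n_\star\nu$, hence of $T^n_\star\mu$, under vertical rotation. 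Iterating over the stratification $P_1\supset P_2\supset\cdots\supset P_d=\{m\}$ (measures projecting to Lebesgue on the first $i$ coordinates) shows that any $T_\star$-invariant probability on $P_1$ is concentrated on $\{m\}$, i.e.\ the system is weakly twisting (Corollary \ref{weaktwistingskewCor}). Theorem \ref{skewThm} then drops out of the general characterization of weak twisting in Corollary \ref{wktwistingCor}. If you want to salvage your Fourier approach, the missing ingredient is precisely this shearing/approximation step, reinterpreted as an oscillatory-integral estimate uniform over bounded weights.
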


\begin{theorem} \label{skew2Thm}
Let $(X, T)$ be as in Theorem \ref{skewThm} with $d=2$ (i.e. $T(x,y) = (x+\alpha, y+f(x))$.)  Then
the limit holds with no exceptional set:
\[ \lim_{n \to \infty} \int_X f \circ T^n d\mu = \int_X f dm. \]
\end{theorem}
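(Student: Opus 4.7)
The plan is to verify weak-$*$ convergence $T^n_\star \mu \to m$ by testing against characters of $\TT^2$: it suffices to show that for each $(k,\ell)\in\ZZ^2\setminus\{(0,0)\}$ one has $\int e^{2\pi i(kx+\ell y)}\,dT^n_\star\mu \to 0$.  When $\ell=0$ the integrand depends only on the base variable, and the hypothesis that $\mu$ projects to Lebesgue forces the integral to vanish for \emph{every} $n$, so the real content lies in the case $\ell\neq 0$.

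For $\ell\neq 0$, I would disintegrate $\mu = \int dx \otimes d\mu_x$ over the first coordinate and set $h(x)=\int_\TT e^{2\pi i\ell y}\,d\mu_x(y)$, a bounded Borel function of the base variable.  Unfolding $T^n(x,y)=(x+n\alpha,\,y+S_n(x))$, with $S_n(x)=\sum_{j=0}^{n-1}f(x+j\alpha)$, yields
\begin{equation*}
\int_{\TT^2} e^{2\pi i(kx+\ell y)}\,dT^n_\star\mu \;=\; e^{2\pi i kn\alpha}\int_\TT e^{2\pi i k x}\,e^{2\pi i\ell S_n(x)}\,h(x)\,dx.
\end{equation*}
Since $f$ has degree $c\neq 0$ I lift it to $F(x)=cx+\tilde F(x)$ with $\tilde F$ Lipschitz and periodic, absorbing the mean of $\tilde F$ into a unimodular prefactor so that $\int\tilde F=0$.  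Then $\ell S_n(x)=\ell cnx+\ell c\alpha\binom{n}{2}+\ell\tilde S_n(x)$, and the claim reduces to
\begin{equation*}
\int_\TT e^{2\pi i(k+\ell cn)x}\,e^{2\pi i\ell\tilde S_n(x)}\,h(x)\,dx \;\longrightarrow\; 0 \quad\text{as } n\to\infty,
\end{equation*}
i.e., the Fourier coefficient of $h\cdot e^{2\pi i\ell\tilde S_n}$ at frequency $-(k+\ell cn)$ must vanish in the limit, a frequency whose modulus tends to infinity because $\ell c\neq 0$.

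The technical heart is this oscillatory integral.  I would first approximate $h$ in $L^1(\TT)$ by a trigonometric polynomial $h_\epsilon$ (the error is uniformly $O(\epsilon)$ in $n$), thereby reducing to high-frequency decay of the Fourier coefficients of $e^{2\pi i\ell\tilde S_n}$ alone.  The main obstacle is that direct integration by parts is ineffective: the phase $\ell\tilde S_n$ is Lipschitz with constant of order $n$, matching exactly the size of the target frequency, so the naive bound is $O(1)$.  To overcome this I would try to upgrade the uniform-in-$\mu$ Cesaro convergence of Theorem \ref{skewThm} to pointwise convergence, combining (i) the fact that $\frac{1}{n}\tilde S_n\to 0$ uniformly by unique ergodicity of the base rotation applied to the continuous zero-mean $\tilde F$, (ii) the Lipschitz regularity of $\tilde F$, and (iii) a van der Corput / equicontinuity estimate on the successive differences of the target integrals that rules out sudden spikes without contradicting the Cesaro statement.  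The decisive feature of the $d=2$ hypothesis is that the base is a single irrational rotation, keeping the cocycle $\tilde S_n$ tractable --- an advantage lost in the iterated skew products of Theorem \ref{skewThm} for $d\geq 3$, which is precisely why the general theorem allows an exceptional set $J$.
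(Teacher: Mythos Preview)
Your Fourier-analytic setup is natural and the reduction is correct: disintegration over the base, the formula for $T^n$, the splitting $f=cx+\tilde F$, and the identification of the target as the $-(k+\ell cn)$-th Fourier coefficient of $h\cdot e^{2\pi i\ell\tilde S_n}$ are all fine.  You also correctly pinpoint the obstacle: the Lipschitz constant of the phase grows like $n$, exactly matching the target frequency, so naive integration by parts yields only $O(1)$.

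The gap is in your proposed resolution.  The ``equicontinuity estimate on successive differences'' does not hold.  Writing $a_n=\int\varphi\,dT^n_\star\mu$, one has
\[
a_{n+1}-a_n \;=\; \int(\varphi\circ T-\varphi)\,dT^n_\star\mu,
\]
and since $T$ is not close to the identity this is in general bounded only by the fixed constant $\|\varphi\circ T-\varphi\|_\infty$; it does not decay.  Concretely, for $\varphi=e^{2\pi i\ell y}$ the difference involves the factor $e^{2\pi i\ell f(x+n\alpha)}-1$, of modulus up to $2$.  Without a decay rate on $|a_{n+1}-a_n|$ one cannot upgrade the Ces\`aro statement of Theorem~\ref{skewThm} to genuine convergence: a sequence can satisfy $\tfrac{1}{N}\sum|a_n|\to 0$ while having bumps of fixed height and slowly growing width.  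There is also a logical issue: in the paper both Theorem~\ref{skewThm} and Theorem~\ref{skew2Thm} are deduced from Theorem~\ref{skewimprovementThm}, and the $d=2$ case of the latter already gives twisting outright, so bootstrapping from Theorem~\ref{skewThm} is circular in spirit.

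By contrast, the paper's argument is geometric shearing rather than harmonic analysis.  One approximates $\mu$ by a measure $\nu$ with continuous density on finitely many horizontal slices.  The key point is that a tiny horizontal shift $S$ by $\delta(n)\sim t/(cn)$ is stretched by $T^n$ into (approximately) the vertical rotation $R$ by $t$: this is exactly the statement that $S_n'(x)/n\to c$ almost everywhere, which for $d=2$ is just Birkhoff applied to the bounded function $f'$ (for general $d$ it is the unipotent ergodic theorem, Theorem~\ref{unipotentcocycleThm}).  Since $S$ nearly preserves $\nu$, one gets $\|RT^n_\star\nu-T^n_\star\nu\|_\star\to 0$, so every weak-$\star$ limit is invariant under vertical rotation and hence equals Haar.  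If you wish to salvage the Fourier route, the clean way is to invoke the classical fact that Anzai skew products with cocycle of nonzero degree have countable Lebesgue spectrum on the orthocomplement of $L^2$ of the base; then $\langle V_\ell^n 1, e_{-k'}\rangle\to 0$ is immediate.  This parallels the paper's ``second proof'' of Theorem~\ref{nilimprovementThm}.
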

Furstenberg proves in \cite{furstenberg} (Theorem 2.1) that such systems are uniquely ergodic.  Our theorem is a direct extension of his.

The multiplicative ergodic theorem alluded to above is one of the most significant results in this paper, so we include it here.  Let $U$ be the group of upper triangular $d \times d$ matrices with entries in $\RR.$  Then $U$ is admits a one parameter family $\theta_t$ of dilations given by
\[ \theta_t \left( \begin{array}{ccccc}
1 & u_{1,2} & u_{1,3} & \cdots & u_{1,d} \\
0 & 1 & u_{2,3} & \ & u_{2,d} \\
0 & 0 & 1 & \ & \vdots \\
\vdots & \ & \ & \ddots  & \ \\
0 & 0 & 0 & \ & u_{d-1,d} \\
0 & 0 & 0 & \cdots & 1
\end{array} \right)
=
\left( \begin{array}{ccccc}
1 & t u_{1,2} & t^2 u_{1,3} & \cdots & t^{d-1} u_{1,d} \\
0 & 1 & t u_{2,3} & \ & t^{d-2} u_{2,d} \\
0 & 0 & 1 & \ & \vdots \\
\vdots & \ & \ & \ddots & \ \\
0 & 0 & 0 & \ & t u_{d-1,d} \\
0 & 0 & 0 & \cdots & 1
\end{array} \right)
\] 
More formally $(\theta_t(u))_{i,j} = t^{j-i} u_{i,j}$ for $j \geq i.$  It is not hard to check that each $\theta_t$ is an automorphism of $U.$  In fact $\theta: t \mapsto \theta_t$ is a homomorphism from the semigroup $((0,\infty), \times)$ into
the automorphism group of $U.$  If we equip the latter group with the topology of uniform convergence on compact sets,  then $\theta$ is continuous.

\begin{theorem} \label{unipotentcocycleThm}
Let $(X, \mB, m, T)$ be a probability measure preserving system and suppose $f:X \to U$ is bounded and measurable.  Then there exists $f^*: X \to U$ (also bounded, measurable) such that for almost every $x \in X$
\[ \lim_{n \to \infty} \theta_{1/n} ( f ( T^{n-1} x) \cdots f(T^2 x) f (Tx) f(x) ) = f^*(x).\]
Furthermore, $f^* \circ T = f^*$ almost everywhere.

If $m$ is ergodic then, almost everywhere, 
\[ f^*_{i,j} = \lambda \prod_{k=i}^{j-1} \, \int_X f_{k,k+1} dm, \]
where $\lambda = \lambda(j-i)$ is some positive constant depending only on $j-i$.
\end{theorem}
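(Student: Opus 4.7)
The plan is to prove convergence of each matrix entry of $P_n(x) := f(T^{n-1}x) \cdots f(Tx) f(x)$ by induction on the height $r := j - i$ above the diagonal. Since $(\theta_{1/n} P_n)_{i,j} = n^{-(j-i)} (P_n)_{i,j}$, the theorem reduces to showing $(P_n)_{i,j}/n^r$ converges a.e., with the ergodic limit equal to $\lambda(r) \prod_{k=i}^{j-1} \int f_{k,k+1}\, dm$. The starting point is the recursion $P_{n+1}(x) = f(T^n x) P_n(x)$, which upon expanding the matrix product yields
\[ (P_{n+1})_{i,j} - (P_n)_{i,j} = f(T^n x)_{i,j} + \sum_{i < k < j} f(T^n x)_{i,k} (P_n)_{k,j}. \]
Telescoping in $n$ gives the fundamental identity
\[ (P_n)_{i,j} = \sum_{\ell = 0}^{n-1} f(T^\ell x)_{i,j} + \sum_{i < k < j} \sum_{\ell = 0}^{n-1} f(T^\ell x)_{i,k} (P_\ell)_{k,j}, \]
which expresses the height-$r$ entry in terms of entries of strictly smaller height.

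For the base case $r=1$ the identity reduces to $(P_n)_{i,i+1} = \sum_\ell f(T^\ell x)_{i,i+1}$, so Birkhoff's ergodic theorem gives $n^{-1}(P_n)_{i,i+1} \to \int f_{i,i+1}\, dm$ a.e.\ in the ergodic case. For the inductive step at height $r \ge 2$, the first sum on the right of the identity is $O(n)$ and contributes $0$ after division by $n^r$. In each inner summand I would substitute the inductive hypothesis $(P_\ell)_{k,j} = \ell^{j-k} f^*_{k,j}(x) + o(\ell^{j-k})$ and apply the weighted Birkhoff theorem
\[ \frac{1}{n^{p+1}} \sum_{\ell=0}^{n-1} \ell^p g(T^\ell x) \longrightarrow \frac{1}{p+1} \int g\, dm \quad \text{a.e.,} \]
which follows from standard Birkhoff by summation by parts. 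Only the $k = i+1$ summand contributes at order $n^r$; for $k \ge i+2$ the factor $(P_\ell)_{k,j}$ is $O(\ell^{j-k})$ and the resulting weighted sum has order $n^{j-k+1} = o(n^r)$. The surviving term gives
\[ \lim_n n^{-r}(P_n)_{i,j} = \frac{1}{r} \Bigl(\int f_{i,i+1}\, dm\Bigr) f^*_{i+1,j}(x), \]
so $\lambda(r) = \lambda(r-1)/r$ and inductively $\lambda(r) = 1/r!$. The non-ergodic statement runs identically, with conditional expectations on the $T$-invariant $\sigma$-algebra in place of the integrals throughout.

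Finally, the invariance $f^* \circ T = f^*$ will follow from the identity $P_n(Tx) = P_{n+1}(x) f(x)^{-1}$: applying $\theta_{1/n}$, and using that $\theta_{1/n}(f(x)^{\pm 1}) \to \Id$ because $f$ is bounded and each off-diagonal entry is scaled by at most $1/n$, combined with the fact that $\theta_{1/n} P_{n+1}(x)$ and $\theta_{1/(n+1)} P_{n+1}(x)$ share a limit since $\theta_{(n+1)/n} \to \Id$. The main technical obstacle is the inductive step: the inductive hypothesis only supplies pointwise a.e.\ convergence of $(P_\ell)_{k,j}/\ell^{j-k}$, and feeding this $o(\ell^{j-k})$ error into a polynomially weighted Cesaro sum requires a truncation argument. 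I would split the $\ell$-sum at $\ell = \e n$, bound the short piece by $O(\e^r)$ using boundedness of $f$, handle the long piece on the full-measure set where the inductive hypothesis and the weighted Birkhoff theorem both hold, and let $\e \to 0$ at the end.
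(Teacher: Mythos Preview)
Your argument is correct and follows a genuinely different route from the paper's. The paper first proves two lemmas about products in $U$: for fixed $u$, $\theta_{1/n}(u^n)$ converges with the stated limit; and if $u_1,u_2,\ldots$ agree with $u$ on the first super-diagonal to within $\delta$, then $\theta_{1/n}(u_1\cdots u_n)$ is asymptotically close to $\theta_{1/n}(u^n)$. The main proof then groups the cocycle into blocks of length $N$, applies Birkhoff to the super-diagonal entries so that the normalized block products $v_k=\theta_{1/N}C(T^{kN}x,N)$ are $\delta$-close to a fixed $u$ for most $k$, and invokes the second lemma; a fair amount of bookkeeping is needed to control the exceptional set of bad blocks and the ergodic decomposition of $T^N$. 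Your induction on $j-i$ via the telescoping identity and the weighted Birkhoff theorem is considerably more direct, bypasses the block machinery entirely, and produces $\lambda(r)=1/r!$ transparently from the recursion $\lambda(r)=\lambda(r-1)/r$. The paper's block framework does transfer verbatim to the uniquely ergodic version (Theorem~\ref{ueunipotentcocycleThm}), but so does yours: the weighted Birkhoff average inherits uniform convergence from the unweighted one through the same summation by parts.
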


Just as Birkhoff's point-wise theorem becomes stronger in the uniquely ergodic case, so does Theorem \ref{unipotentcocycleThm}.

\begin{theorem} \label{ueunipotentcocycleThm}
Suppose $(X,T)$ is uniquely ergodic and $f:X \to U$ is continuous.  Then
\[ \lim_{n \to \infty} \theta_{1/n} ( f ( T^{n-1} x) \cdots f(T^2 x) f (Tx) f(x) ) \]
converges uniformly to the constant given in Theorem \ref{unipotentcocycleThm}.
\end{theorem}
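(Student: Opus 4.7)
The plan is to prove uniform convergence of each matrix entry separately, via a path-expansion of the matrix product. Write $B_n(x) := f(T^{n-1}x)\cdots f(Tx) f(x)$ and decompose each $f(T^t x) = I + N_t(x)$ with $N_t$ strictly upper triangular. Expanding $B_n = \prod_{t=n-1}^0 (I+N_t(x))$ and collecting contributions by the set of ``active'' times yields
\[ (B_n(x))_{i,j} = \sum_{k=1}^{j-i}\ \sum_{i=i_0<i_1<\cdots<i_k=j}\ \sum_{n-1\geq t_1>\cdots>t_k\geq 0}\ \prod_{s=1}^k f_{i_{s-1},i_s}(T^{t_s}x) \qquad (j>i). \]
After dividing by $n^{j-i}$ (the scaling of $\theta_{1/n}$ on entry $(i,j)$), any path of length $k<j-i$ contributes at most $\binom{n}{k}\|f\|_\infty^k/n^{j-i} = O(n^{-(j-i-k)}) = o(1)$ uniformly in $x$, since the entries of $f$ are bounded on the compact space $X$. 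Hence only the direct path $i<i+1<\cdots<j$ survives in the limit, and the task reduces to showing that for $r=j-i$ and $\phi_s := f_{i+s-1,i+s}$,
\[ \Sigma_r(n,x) := \frac{1}{n^r}\sum_{n-1\geq t_1>\cdots>t_r\geq 0}\phi_1(T^{t_1}x)\cdots\phi_r(T^{t_r}x)\ \longrightarrow\ \frac{1}{r!}\prod_{s=1}^r\int_X \phi_s\,dm \]
uniformly in $x\in X$; this also identifies the constant $\lambda(r)=1/r!$ in Theorem \ref{unipotentcocycleThm}.

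Next I would prove uniform convergence of $\Sigma_r$ by induction on $r$. The base case $r=1$ is Oxtoby's uniform ergodic theorem for uniquely ergodic systems. For the inductive step, peel off the outermost index:
\[ \Sigma_r(n,x) = \frac{1}{n^r}\sum_{t=r-1}^{n-1}\phi_1(T^t x)\cdot \tilde S_{r-1}(t,x), \qquad \tilde S_{r-1}(t,x) := \sum_{t>t_2>\cdots>t_r\geq 0}\phi_2(T^{t_2}x)\cdots\phi_r(T^{t_r}x). \]
By the inductive hypothesis, $\tilde S_{r-1}(t,x)/t^{r-1}$ tends to $(1/(r-1)!)\prod_{s\geq 2}\int\phi_s\,dm$ uniformly in $x$, with some error $\epsilon(t)\to 0$. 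Substituting and dealing with the error (see below) reduces the main term to the polynomial-weighted Birkhoff average
\[ \frac{1}{n^r}\sum_{t=0}^{n-1} t^{r-1}\phi_1(T^t x) = \frac{1}{n}\sum_{t=0}^{n-1}\bigl(t/n\bigr)^{r-1}\phi_1(T^t x), \]
a Riemann-sum-like expression which converges uniformly to $(1/r)\int\phi_1\,dm$ by approximating $s\mapsto s^{r-1}$ from above and below by step functions and applying Oxtoby's theorem on each piece. Multiplying by the inductive constant produces $(1/r!)\prod_{s=1}^r\int\phi_s\,dm$, as required.

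The main obstacle is the error bookkeeping in the inductive step: the inductive hypothesis controls $\tilde S_{r-1}(t,x)/t^{r-1}$ only in the limit $t\to\infty$, whereas the outer sum runs over all $t\in[r-1,n-1]$ and is coupled to $x$ through the factor $\phi_1(T^tx)$. This is handled by a Ces\`aro-splitting argument: given $\delta>0$, pick $M$ so that $|\epsilon(t)|<\delta$ for all $t\geq M$; the contribution to $\Sigma_r$ of the range $t\geq M$ is then bounded by $\|\phi_1\|_\infty\delta\cdot(1/n^r)\sum_{t<n}t^{r-1}\leq \|\phi_1\|_\infty\delta$, while the contribution of $t<M$ is $O(M^r/n^r)\to 0$. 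Sending $n\to\infty$ and then $\delta\to 0$ finishes the inductive step. Uniform convergence of each matrix entry implies uniform convergence of the full $U$-valued product, since $U$ has only finitely many entries.
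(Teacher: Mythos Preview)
Your proof is correct and takes a genuinely different route from the paper's. The paper proceeds by reusing the machinery of Theorem~\ref{unipotentcocycleThm}: it groups the product into blocks of length $N$, uses unique ergodicity to guarantee that every block $\theta_{1/N}C(T^{kN}x,N)$ has superdiagonal entries within $\delta$ of $u_{i,i+1}=\int f_{i,i+1}\,dm$ for \emph{all} $x$ (so no exceptional set $E$ is needed), and then invokes Lemmas~\ref{unipowerLem} and~\ref{unilimitLem} on the block products. You instead expand the product combinatorially over index-paths, observe that any path of length $k<j-i$ contributes $O(n^{k}/n^{j-i})=o(1)$ uniformly, and reduce the surviving direct path $i\to i+1\to\cdots\to j$ to an iterated ordered Birkhoff sum, which you handle by induction on $r=j-i$ together with a weighted (Riemann-sum) version of Oxtoby's theorem. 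Your argument is more elementary and fully self-contained (it does not need Lemmas~\ref{unipowerLem} or~\ref{unilimitLem}), and as a bonus it identifies the constant $\lambda(r)=1/r!$ explicitly, which the paper leaves implicit. The paper's approach, by contrast, unifies the uniquely ergodic case with the measure-preserving case and so requires essentially no new work once Theorem~\ref{unipotentcocycleThm} is proved. One minor quibble: the phrase ``approximating $s\mapsto s^{r-1}$ from above and below by step functions'' implicitly assumes $\phi_1\geq 0$; for signed $\phi_1$ you should either split into positive and negative parts or, more cleanly, use a single step function within $\eta$ of $s^{r-1}$ in sup-norm, which gives the same conclusion without any sign hypothesis.
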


Finally we prove some partial results on the behavior of measures under pushforward in systems like $(x+\alpha, 
2y+f(x))$ which are neither isometric extensions, nor iterated isometric extensions over their maximal equicontinuous factor, and hence are of a fundamentally different character from the systems discussed above.  These theorems prove the existence of some interesting $T_\star$-invariant probability measures on $P(X),$ which, as we will see in the next section, are pertinent to the asymptotic behavior of $T^n_\star \mu.$


\section{Twisting and weak twisting} \label{twisting}

Before proceeding to the proofs, we introduce some new definitions (twisting and weak twisting) which provide a nice abstract perspective on the results.  We wish to build an analogy between the triples ergodicity, weak mixing, strong mixing, and unique ergodicity, weak twisting, twisting.

Suppose $(X,T)$ is a transitive topological system (i.e. has a point with dense orbit.)  We can define $T_\star: P(X) \to P(X)$ by the pushforward:
\[ T_\star \mu (A) := \mu(T^{-1} A), \text{ \ or equivalently \ }
\int f d T_\star \mu := \int f \circ T d \mu. \]
This makes $(P(X),T_\star)$ into a compact topological system.  Portions of this system have been studied by Glasner in \cite{glasner}, who introduced measure theoretic quasi-factors (certain invariant probability measures on $P(X).$  This avenue of research was furthered by Glasner and Weiss in \cite{glasnerweiss}.

An invariant measure $m$ on $X$ appears in this system as a fixed point. We will study the basin of attraction of $m$ (that is, the set of $\mu$ for which $T^n_\star \mu \to m.$   As explained in Proposition \ref{mixProp}, $m$ is mixing if and only if every probability measure $\mu$ absolutely continuous with respect to $m$ attracts to $m$.  So, if $m$ has full support and is mixing then its basin of attraction is dense.
If all of $P(X)$ attracts to $m$ then, in particular $\delta_x$ attracts to $m$ for each $x \in X.$  But this implies that $m$ is itself a point mass $\delta_{x_0}.$  It follows that $T^n(x)$ must tends to $x_0$ for every $x.$  From most perspectives, this is not very interesting.  Indeed, from a measure theoretic perspective, $(X,T)$ is equivalent to the one point system.

What, then, is the largest closed invariant subset $P'$ of $P(X)$ for which it is reasonable to ask if all $\mu$ in $P'$ attract to $m$?  Notice that a convex combination of measures attracting to $m$ also attracts to $m.$  So we may as well consider only convex $P'.$

Let $K = K (X,T)$ be the maximal equicontinuous factor of $(X,T).$  Specifically, $K$ is the maximal ideal space of the algebra
\[ \{ f \in C(X) : \overline{ \{ f \circ T^n : n \in \ZZ \} } \text{ \ is compact } \}. \]
We will also denote the induced transformation on $K$ by $T.$
The above definition is convenient for our purposes (for instance it makes it clear that $K$ is a functor.)  For other definitions and further discussion, see \cite{glasner2}.  One important fact we will use is that a metric may be chosen for $K(X,T)$ such that $T$ acts by isometries.  Transitivity, equicontinuity, and invertibility together imply minimality.  So $K$ is a compact abelian group and $T(x) = \alpha x$ for some fixed $\alpha \in K$ (see for instance \cite{walters}).

Write $m_K$ for normalized Haar measure on $K.$  Since $m$ was assumed to be invariant on $X,$ it projects to an invariant measure on $K.$  Unique ergodicity of $(K,T)$ tells us $m$ must project to $m_K.$  Suppose some $\mu \in P'$ projects to a measure $\nu$ on $K$ different from $m_K.$   It is easy to see $\nu$ does not attract to $m_K$ (later we will put a metric on $P(K)$ with the property that $T_\star$ is an isometry, so the distance from $T^n_\star \nu$ to $m_K$ is independent of $n.$)  Since $\nu$ does not attract to $m_K,$ it follows that $\mu$ does not attract to $m.$  So, for our purposes, we need only consider $P'$ contained in the following set.
\begin{definition}  We write $P_1 = P_1(X,T)$ for the set of all probability measures on $X$ which project to $m_K.$  This is a convex closed nonempty invariant subset of $P(X).$  So, $(P_1,T_\star|_{P_1})$ is a topological dynamical system.
\end{definition}
\begin{definition}
We will call $(X,T)$ {\it twisting} if
\[ \text{ for all \ } \mu \in P_1, \lim_{n \to \infty} T^n_\star(\mu) = m. \]
\end{definition}

Loosely speaking, a system is twisting if every probability measure which conceivably could, equidistributes under repeated application of $T_\star$ (i.e. unless it is prohibited from doing so by the maximal equicontinuous factor.)

In the spirit of treating $(P(X),T_\star)$ as a topological system, it is interesting to study its invariant measures.  We know of one invariant measure: $\delta_m.$
The argument we will use to prove Proposition \ref{twistingProp} part 2 shows that $m$ is weakly mixing if and only if every measure $\mu$ absolutely continuous with respect to $m$ is a typical point for $\delta_m.$  That is, for any $F \in C(P(X))$
\[ \lim_{N \to \infty} \frac{1}{N} \sum_{n=0}^{N-1} F( T^n_\star \mu) = \int F d \delta_m = F(m). \]
So, if $m$ has full support and is weakly mixing, then there is a dense set of measures $\mu \in P(X)$ which are typical points for the invariant measure $\delta_m$ on the system $(P(X), T_\star).$  Is it possible that every $\mu$ is a typical point for $\delta_m?$  Equivalently, is $(P(X),T_\star)$ ever uniquely ergodic?

The answer is obviously no.  Notice that $(X,T)$ is a subsystem of $(P(X),T_\star)$ where the inclusion is given by $\iota(x) = \delta_x.$  Our invariant measure $m$ on $X$ gives us an invariant measure on $P(X)$ in two ways.  Certainly we have $\delta_m.$ But we also have $\iota_\star(m) =  \iota_\star( \int_X \delta_x dm) = \int_X \delta_{\delta_x} dm.$  Unless $X$ is a single point, these are distinct.  In other words, the only way $(P(X), T_\star)$ can be uniquely ergodic is if $(X,T)$ is already the trivial system.

By analogy to the discussion of attracting fixed points, one might wonder what is the largest subsystem $P' \subset P(X)$ for which it is reasonable to ask if every point is typical for $\delta_m?$  Such measures don't necesarilly tend to $m,$ but they do spend most of their time near $m$ in the sense of uniform density (see Corollary  \ref{wktwistingCor}.)  Equivalently, what is the largest subsystem $P' \subset P(X)$ for which it is reasonable to ask if $(P', T_\star)$ is uniquely ergodic?

Suppose some $\mu \in P'$ projects to $\nu \neq m_K.$  Then $T^n_\star \nu$ avoids some weak$^\star$ neighborhood of $m_K.$  If we average along the orbit of $\nu$ as in the usual proof of the existence of invariant measures (that is, we take a weak$^\star$ limit of $N^{-1} \sum_{n=0}^{N-1} \delta_{T^n_\star \nu}$,) we get a measure different from $\delta_{m_K}.$  Indeed, the two measures can not be the same because they have disjoint support.  It follows that any weak$^\star$ limit of
$N^{-1} \sum_{n=0}^{N-1} \delta_{T^n_\star \mu}$ is different from $\delta_m.$  In particular $(P', T_\star)$ is not uniquely ergodic.  As before, we conclude that, at the very least, we must require $P' \subseteq P_1.$

\begin{definition}
We call $(X,T)$ {\it weakly twisting} if $(P_1, T_\star)$ is uniquely ergodic.  This is equivalent to requiring that each $\mu \in P_1$ is a typical point for $\delta_m.$
\end{definition}

The next proposition follows immediately from the definitions given above.
\begin{proposition} \label{twistingdescriptionProp}
\begin{enumerate}
\item Unique ergodicity of $(X,T)$ is equivalent to the existence of a unique fixed point in $P(X)$ (or in $P_1(X).$)
\item Weak twisting is equivalent to the existence of a unique fixed point in $P(P_1(X))$ (or in $P_1(P_1(X)).$)
\item twisting is equivalent to the existence of a unique universal attracting fixed point in $P_1(X).$
\end{enumerate}
\end{proposition}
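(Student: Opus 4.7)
The proposition is advertised as following immediately from the definitions, so my plan is to unwind each clause in a few lines, with the only real ingredient being that any $T$-invariant measure on $X$ automatically lies in $P_1(X)$, since its projection to the maximal equicontinuous factor $K$ must be $m_K$ (minimal rotations on compact abelian groups are uniquely ergodic).

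For part (1), I would observe that a probability measure $\mu$ on $X$ is a fixed point of $T_\star$ precisely when $\mu$ is $T$-invariant, so unique ergodicity of $(X,T)$ is tautologically the same as having a unique fixed point in $P(X)$. To upgrade to $P_1(X)$, I would use that any invariant $\mu$ pushes forward under $\pi:X\to K$ to a $T$-invariant measure on $K$, which by unique ergodicity of the group rotation $(K,T)$ must equal $m_K$; hence $\mu\in P_1(X)$, and the fixed-point sets in $P(X)$ and $P_1(X)$ coincide.

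For part (2), weak twisting was defined as unique ergodicity of the system $(P_1(X),\,T_\star|_{P_1})$. I would simply apply part (1) to this new topological dynamical system: unique ergodicity is equivalent to having a unique fixed point in $P(P_1(X))$, and repeating the projection argument to the maximal equicontinuous factor of $(P_1,T_\star)$ upgrades this to the equivalent statement for $P_1(P_1(X))$.

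For part (3), twisting was defined as: for every $\mu\in P_1(X)$, $T^n_\star \mu\to m$. This is literally the assertion that $m$ is a fixed point in $P_1(X)$ whose basin of attraction is all of $P_1(X)$, i.e.\ a universal attracting fixed point; any such point is automatically unique, since any competing fixed point would have to be attracted to it and hence equal it. Conversely, existence of a universal attracting fixed point in $P_1(X)$ forces that point to equal $m$ (as $m$ itself is a fixed point in $P_1(X)$), which is precisely the twisting condition. The main obstacle, such as it is, is purely bookkeeping in part (2): one should check that the maximal equicontinuous factor of $(P_1,T_\star)$ is well-posed and that its invariant measures project to Haar, but this follows by repeating the argument of part (1) verbatim.
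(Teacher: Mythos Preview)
Your proposal is correct and matches the paper's approach exactly: the paper itself offers no proof beyond the sentence ``The next proposition follows immediately from the definitions given above,'' and what you have written is precisely the routine unwinding of those definitions. Your observation that every $T$-invariant measure automatically lies in $P_1(X)$ (because its projection to $K$ must be $m_K$) is the only non-tautological point, and it is exactly what the paper has already noted in the discussion preceding the proposition.
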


\begin{proposition} \label{twistingImplicationProp}
Twisting implies weak twisting.  Weak twisting implies unique ergodicity.
\end{proposition}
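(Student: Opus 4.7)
The plan is to verify each implication directly from the definitions in Section~\ref{twisting}; neither requires heavy machinery.

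For twisting $\Rightarrow$ weak twisting, I would fix an arbitrary $T_\star$-invariant probability measure $\rho$ on $P_1$ and aim to show $\rho = \delta_m$. Twisting gives $T^n_\star \mu \to m$ in $P_1$ for every $\mu \in P_1$, so for any $F \in C(P_1)$ continuity followed by Cesaro averaging yields $\frac{1}{N}\sum_{n=0}^{N-1} F(T_\star^n \mu) \to F(m)$ for each $\mu$. Integrating this partial average against $d\rho(\mu)$, and using $T_\star$-invariance of $\rho$ on the left-hand side together with bounded convergence on the right, produces $\int F\, d\rho = F(m)$ for every $F \in C(P_1)$. Hence $\rho = \delta_m$, so $(P_1, T_\star)$ is uniquely ergodic.

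For weak twisting $\Rightarrow$ unique ergodicity, let $\nu$ be any $T$-invariant probability measure on $X$. First I would check $\nu \in P_1$, i.e., $\pi_\star \nu = m_K$, where $\pi: X \to K$ is the projection onto the maximal equicontinuous factor. This follows from unique ergodicity of $(K,T)$: transitivity of $(X,T)$ forces $(K,T)$ to be a minimal rotation on the compact abelian group $K$, and minimal rotations on compact groups are uniquely ergodic. The $T$-invariance of $\nu$ then makes $\delta_\nu \in P(P_1)$ a $T_\star$-invariant probability measure, and weak twisting forces $\delta_\nu = \delta_m$, i.e., $\nu = m$.

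There is no serious obstacle; both implications amount to translations between the topological mode of convergence (twisting) and the time-average mode (weak twisting, unique ergodicity). The one place requiring care is the preliminary verification that every invariant $\nu$ already lies in $P_1$, which is precisely where the unique ergodicity of the maximal equicontinuous factor $(K,T)$ must be invoked explicitly.
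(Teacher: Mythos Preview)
Your proposal is correct and follows essentially the same approach as the paper. For the first implication the paper simply notes that $T^n_\star\mu\to m$ makes $\mu$ a typical point for $\delta_m$ (using the ``every point is typical'' characterization of weak twisting), while you instead verify directly that any $T_\star$-invariant $\rho$ on $P_1$ equals $\delta_m$; these are two equivalent formulations of the same definition and the passage between them is immediate. For the second implication your argument is identical to the paper's, with the helpful addition of spelling out why an invariant $\nu$ lies in $P_1$.
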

\begin{proof}
If $T^n_\star \mu$ tends to $m$ then $\mu$ is a typical point for $\delta_m.$  This gives the first implication.
Since invariant measures $m_1, m_2$ on $X$ gives rise to invariant measures $\delta_{m_1}, \delta_{m_2}$ on $P_1(X),$ we see that unique ergodicity of $P_1$ implies unique ergodicity of $X.$  This proves the second implication.
\end{proof}

As we said at the beginning of this section, we wish to build an analogy between the triples ergodicity, weak mixing, mixing, and unique ergodicty, weak twisting, twisting.
The following characterizations should make that analogy clear (compare to Proposition \ref{mixProp}).
\begin{proposition} \label{twistingProp}
 Let $(X, T)$ be a topological dynamical system and let $m \in P(X)$ be an invariant measure.
\begin{enumerate}
\item $(X,T)$ is uniquely ergodic if and only if for all $\mu \in P_1$ and for all $f \in C(X)$
\[ \lim_{N \to \infty} \frac{1}{N} \sum_{n =0}^{N-1} \int_X f dT^n_\star \mu - \int_X f dm = 0. \]
\item $(X,T)$ is weakly twisting if and only if for all $\mu \in P_1$ and for all $f \in C(X)$
\[ \lim_{N \to \infty} \frac{1}{N} \sum_{n =0}^{N-1} \left| \int_X f dT^n_\star \mu - \int_X f dm \right| = 0. \]
\item $(X,T)$ is twisting if and only if for all $\mu \in P_1$ and for all $f \in C(X)$
\[ \lim_{N \to \infty} \int_X f dT^N_\star \mu - \int_X f dm = 0. \text{ \ That is, \ } \lim_{N \to \infty} T^N_\star \mu = m. \]
\end{enumerate}
\end{proposition}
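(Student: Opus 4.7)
The plan is to deduce each of the three characterizations from the standard criterion for unique ergodicity (uniform convergence of Birkhoff averages of continuous functions to the integral) applied at the appropriate level, tackling them in the order (3), (1), (2).  Statement (3) is essentially a tautology: by definition, twisting asserts $T^n_\star \mu \to m$ in the weak-$\star$ topology on $P(X)$ for every $\mu \in P_1$, and weak-$\star$ convergence of measures is precisely the statement that $\int f\, dT^n_\star \mu \to \int f\, dm$ for every $f \in C(X)$.

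For (1), the forward direction follows by applying that criterion to $(X,T)$: $\tfrac{1}{N}\sum_{n=0}^{N-1} f(T^n x) \to \int f\,dm$ uniformly in $x \in X$, which I integrate against $\mu$ (the uniform convergence legitimates swapping limit and integral) to obtain the stated limit for every $\mu \in P(X)$, in particular every $\mu \in P_1$.  For the converse, any $T$-invariant probability $m'$ on $X$ projects to a $T$-invariant probability on the maximal equicontinuous factor $K$; since $(K,T)$ is a minimal translation on a compact abelian group, it is uniquely ergodic with invariant measure $m_K$, so $m' \in P_1$.  Applying the hypothesis to $\mu = m'$ (which is $T^n_\star$-fixed, so the Cesàro sum collapses to $\int f\,dm' - \int f\,dm$) yields $\int f\,dm' = \int f\,dm$ for every $f \in C(X)$, hence $m' = m$.

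For (2), the forward direction combines the definition of weak twisting (that $(P_1, T_\star)$ is uniquely ergodic, with unique invariant measure $\delta_m$) with the same criterion now applied to the system $(P_1, T_\star)$: for every continuous $F:P_1 \to \RR$, $\tfrac{1}{N}\sum F(T^n_\star\mu) \to F(m)$ uniformly in $\mu \in P_1$.  Applied to the continuous function $F_f(\nu) := |\int f\,d\nu - \int f\,dm|$, which vanishes at $\nu = m$, this is the stated limit.  For the converse, I take an arbitrary $T_\star$-invariant probability $m'$ on $P_1$ and show $m' = \delta_m$, which establishes unique ergodicity of $(P_1,T_\star)$ and hence weak twisting.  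Since $F_f$ is bounded by $2\|f\|_\infty$, bounded convergence applied to the hypothesis, combined with the invariance $(T^n_\star)_\star m' = m'$, gives
\[ 0 \;=\; \lim_N \int_{P_1} \tfrac{1}{N}\sum_{n=0}^{N-1} F_f(T^n_\star \mu)\,dm'(\mu) \;=\; \int_{P_1} F_f\,dm'. \]
Thus $\int f\,d\nu = \int f\,dm$ for $m'$-a.e.\ $\nu$; running this over a countable dense family $\{f_k\} \subset C(X)$ on a common conull set forces $\nu = m$ for $m'$-a.e.\ $\nu$, so $m' = \delta_m$.  The only substantive step in the entire proposition is this integration argument, which converts the pointwise-in-$\mu$ Cesàro hypothesis of (2) into a constraint on every invariant measure of the ``meta-system'' $(P_1, T_\star)$; everything else reduces to invoking the standard unique-ergodicity criterion at the right level.
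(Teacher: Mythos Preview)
Your proof is correct. Parts (1) and (3), as well as the forward direction of (2), match the paper's argument essentially verbatim (the paper phrases the forward direction of (1) via dominated convergence rather than uniform convergence, but this is cosmetic).

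The genuine difference is in the converse of (2). The paper invokes Lemma~\ref{cesaroLem}: from the Ces\`aro hypothesis it extracts, for each $f$ and each $\varepsilon>0$, a density-$1$ set of times $n$ along which $|\int f\,dT^n_\star\mu - \int f\,dm|<\varepsilon$; intersecting finitely many such sets shows that $T^n_\star\mu$ visits any weak-$\star$ neighborhood of $m$ along a density-$1$ set, so each $\mu\in P_1$ is generic for $\delta_m$. Your route instead integrates the Ces\`aro hypothesis against an arbitrary $T_\star$-invariant $m'\in P(P_1)$ and uses invariance plus bounded convergence to force $\int_{P_1}F_f\,dm'=0$, hence $m'=\delta_m$. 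Your argument is a touch more direct in that it bypasses Lemma~\ref{cesaroLem} entirely and goes straight to the definition of weak twisting as unique ergodicity of $(P_1,T_\star)$; the paper's argument, on the other hand, establishes the a~priori stronger-looking (though equivalent) statement that every $\mu\in P_1$ is individually generic, which is the form actually exploited in Corollary~\ref{wktwistingCor}.
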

The only difference between Propositions \ref{mixProp} and \ref{twistingProp} is that, in each case, the assumption that $\mu$ is absolutely continuous with respect to $m$ has been replaced by the assumption that $\mu$ lie in $P_1.$  We should expect that usually $P_1$ contains many measures which are not absolutely continuous with respect to $m.$

It would probably be beneficial for the reader to keep in mind the simple motivating example $T(x,y) = (x+\alpha, y+x)$ on $\TT^2.$  Here, $\alpha$ is some irrational number.  Two dimensional Lebesgue measure is the unique invariant measure for this minimal system (see \cite{furstenberg}.)  The class $P_1$ contains many singular measures.  It includes, for instance, one dimensional Lebesgue measure supported on a horizontal line $\{(x,y_0) : x \in \TT \}.$
Using harmonic analysis, it is not difficult to prove that this measure equidistributes under repeated application of $T.$  We leave this proof to the reader, and derive the result, instead, from the more complicated, but significantly more general Theorem \ref{skew2Thm}.

One may object that in Proposition \ref{twistingProp} (1) the assumption that $\mu$ lie in $P_1$ is not necessary.  However, for parts (2) and (3) it is obvious that this assumption is unavoidable (this is the content of the discussion above involving reasonable choices of $P'$.)  In part (1) we assume $\mu \in P_1$ to reinforce the similarity between Proposition \ref{twistingProp} and \ref{mixProp}. 

In the proof of Proposition \ref{twistingProp} and in future propositions we call on the following well known fact.  The proof is easy and is left to the reader.
\begin{lemma} \label{cesaroLem}
A sequence $x_n$ of non-negative real numbers satisfies
\[ \lim_{N \to \infty} \frac{1}{N} \sum_{n=1}^{N} x_n = 0 \text{ \ if and only if \ } \lim_{n \to \infty, n \in J} x_n = 0 \]
for some $J \subset \NN$ of density $1.$  Similarly,
\[ \lim_{N-M \to \infty} \frac{1}{N-M+1} \sum_{n=M}^{N} x_n = 0 \text{ \ if and only if \ } \lim_{n \to \infty, n \in J} x_n = 0 \]
for some $J \subset \ZZ$ of uniform density $1.$
\end{lemma}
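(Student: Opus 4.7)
The plan is to run a standard Koopman--von Neumann-style argument, adapted to handle the density and uniform density cases in parallel. The argument works cleanly under the (implicit) assumption that $x_n$ is bounded by some $B$; this always holds in the applications in this paper, because the sequences of interest have the form $|\int f\circ T^n\,d\mu - \int f\,dm| \leq 2\|f\|_\infty$. Throughout, I treat the density version first; the uniform version differs only in replacing $[1,N]$ by $[M,N]$ and $N\to\infty$ by $N-M\to\infty$.

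For the direction ($\Rightarrow$), I would set $J_k := \{n : x_n \geq 1/k\}$. Markov's inequality gives
\[
\frac{|J_k \cap [1,N]|}{N} \leq k \cdot \frac{1}{N}\sum_{n=1}^{N} x_n,
\]
so each $J_k$ has density zero once the Ces\`aro means of $x_n$ vanish. A diagonal construction then produces the desired set: choose an increasing sequence $N_k$ with $|J_k \cap [1,N]|/N < 1/k$ whenever $N \geq N_k$, and define $J := \NN \setminus \bigcup_k \bigl(J_k \cap [N_k,N_{k+1})\bigr)$. A routine check shows $d(J) = 1$, and if $n \in J$ satisfies $n \geq N_k$, then by construction $n \notin J_k$, whence $x_n < 1/k$; thus $x_n \to 0$ along $J$.

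For the direction ($\Leftarrow$), fix $\e > 0$ and choose $N_0$ with $x_n < \e$ for $n \in J$, $n \geq N_0$. Splitting $[1,N]$ into $[1,N_0]$, $J \cap [N_0,N]$, and $J^c \cap [N_0,N]$ gives
\[
\frac{1}{N}\sum_{n=1}^{N} x_n \;\leq\; \frac{N_0 B}{N} \;+\; \e \;+\; B \cdot \frac{|J^c \cap [1,N]|}{N}.
\]
The first and third terms tend to zero as $N \to \infty$, so $\limsup \frac{1}{N}\sum x_n \leq \e$, and $\e > 0$ was arbitrary.

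The uniform density version is obtained by substituting $[M,N]$ for $[1,N]$: in ($\Rightarrow$) the Markov estimate immediately gives $d^\star(J_k) = 0$, and one picks $N_k$ so that $|J_k \cap [M,N]|/(N-M+1) < 1/k$ whenever $N-M \geq N_k$; in ($\Leftarrow$) the analogous decomposition produces
\[
\frac{1}{N-M+1}\sum_{n=M}^{N} x_n \;\leq\; \frac{N_0 B}{N-M+1} \;+\; \e \;+\; B \cdot \frac{|J^c \cap [M,N]|}{N-M+1},
\]
and both non-$\e$ terms vanish as $N - M \to \infty$. The only mildly fussy point is the uniform-density bookkeeping in the diagonal construction, but this is standard and poses no genuine obstacle.
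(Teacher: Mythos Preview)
The paper does not actually prove this lemma: it states ``The proof is easy and is left to the reader.'' Your argument is the standard Koopman--von Neumann construction and is correct; there is nothing in the paper to compare it against. Your observation that the $(\Leftarrow)$ direction requires boundedness is well taken---as stated, the lemma is false without it (e.g.\ $x_n = n$ on the squares and $0$ elsewhere), and the paper silently relies on the fact that in every application the sequence is bounded by $2\|f\|_\infty$.
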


\begin{proof}[Proof of Proposition \ref{twistingProp}.]
Assume $m$ is the unique invariant measure on $(X,T).$  It is well known that this is equivalent to assuming that
\[ \lim_{N \to \infty} \frac{1}{N} \sum_{n =0}^{N-1} f(T^n x) - \int_X f dm = 0 \]
for all $x \in X$ and all $f \in C(X).$  Integrating with respect to $\mu$ and applying the dominated convergence theorem yields the convergence of the average in (1).  Conversely, suppose this average converges for all $\mu \in P_1.$  All invariant measures lie in $P_1.$  So, in particular, this average converges if $\mu$ is invariant.  In this case the expression immediately degenerates into $\int f d\mu = \int f dm$ which implies $\mu = m.$

Now we prove (2).  Assume $(X,T)$ is weakly twisting.  Fix $f \in C(X)$ and define $F \in C(P_1)$ by $F(\mu) = | \int f d\mu - \int f dm |.$  Since $\delta_m$ is the unique invariant measure on $(P_1, T_\star)$ we see that for any $\mu \in P_1,$ the limit in (2) is equal to
\[ \lim_{N \to \infty} \frac{1}{N} \sum_{n=0}^{N-1} F(T^n_\star \mu) = \int F d \delta_m
= F(m) = 0, \]

To see the converse, notice that if the limit in (2) holds then for any $f$ and any $\e>0$ then by Lemma \ref{cesaroLem} we can find a sequence of density $1$ along which $| \int f dT^n_\star \mu - \int f dm | < \e.$ Intersecting finitely many such sequences proves that the set of $n \in \NN$ for which $T^n_\star \mu$ lies in a given weak$^\star$ neighborhood of $m$ has density $1.$  Obviously, then $\mu$ is a typical point for the invariant measure $\delta_m$ on $P_1.$

For (3) there is nothing to prove.  This is the definition.
\end{proof}

The numerous characterizations of unique ergodicity give rise to characterizations of weak twisting.
\begin{corollary} \label{wktwistingCor}
With $(X,T)$ and $m$ as in Proposition \ref{twistingProp}, weak twisting is equivalent to the assertion that for each $\mu \in P_1$ there exists a sequence $J \subset \NN$ of uniform density $1$ such that for all $f \in C(X), \lim_{n \in J} T^n_\star \mu = m.$  (Moreover, this is never true when $\mu \notin P_1.$)

Weak twisting is also equivalent to the convergence of the uniform averages
\[ \lim_{N-M \to \infty} \frac{1}{N-M} \sum_{i=M}^{N-1} \left| \int f \circ T^i d \mu - \int f dm \right| = 0. \]
Furthermore, if one fixes $f,$ this convergence is uniform in $\mu.$ (Again, this is never true when $\mu \notin P_1.$)
\end{corollary}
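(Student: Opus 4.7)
The strategy is to combine the characterization (Proposition \ref{twistingdescriptionProp}) of weak twisting as unique ergodicity of $(P_1, T_\star)$ with the standard fact that in a uniquely ergodic topological system, Birkhoff averages of continuous observables converge uniformly to the space average. For $f \in C(X)$, define $F_f : P_1 \to \RR$ by $F_f(\mu) = \left| \int f\, d\mu - \int f\, dm \right|$. This is weak-$\star$ continuous on $P_1$, and $F_f(m) = 0$. Weak twisting makes $\delta_m$ the unique $T_\star$-invariant measure on $P_1$, so applying the uniform ergodic theorem to $(P_1, T_\star)$ with observable $F_f$ yields
\[ \frac{1}{N-M}\sum_{i=M}^{N-1} F_f(T^i_\star \mu) \;\longrightarrow\; 0 \quad \text{as } N-M \to \infty, \]
uniformly in $\mu \in P_1$. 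This is precisely the uniform-average statement, with uniformity in $\mu$ built in. Conversely, vanishing of the uniform averages for every $\mu \in P_1$ and $f \in C(X)$ implies vanishing of the ordinary Cesaro averages appearing in Proposition \ref{twistingProp}(2), hence weak twisting.

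For the density-$1$ subsequence formulation, choose a countable sup-norm-dense subset $\{f_k\}_{k \ge 1} \subset C(X)$ with $\|f_k\|_\infty \le 1$, and metrize the weak-$\star$ topology on $P(X)$ via
\[ d(\eta_1, \eta_2) \;=\; \sum_{k \ge 1} 2^{-k} \min\!\Bigl(1, \, \bigl|\textstyle\int f_k\, d\eta_1 - \int f_k\, d\eta_2\bigr|\Bigr). \]
Applying the uniform-average convergence above to each $f_k$ (every term is bounded by $1$), dominated convergence in $k$ gives $\frac{1}{N-M}\sum_{i=M}^{N-1} d(T^i_\star \mu, m) \to 0$ as $N - M \to \infty$. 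Lemma \ref{cesaroLem} then produces a single set $J \subset \NN$ of uniform density $1$ along which $d(T^n_\star \mu, m) \to 0$, i.e.\ $T^n_\star \mu \to m$ weakly, simultaneously for every $f \in C(X)$. For the converse, given such a $J$, the bound $F_f(T^n_\star \mu) \le 2\|f\|_\infty$ together with uniform density $1$ of $J$ forces the uniform averages to vanish, returning us to weak twisting via the previous paragraph.

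For the ``never when $\mu \notin P_1$'' clauses, let $\pi : X \to K$ be the projection to the maximal equicontinuous factor and set $\nu = \pi_\star \mu \neq m_K$. Since $T$ acts on $K$ as translation by some $\alpha \in K$ and Fourier characters separate probability measures on the compact abelian group $K$, there is a character $\chi \neq 1$ of $K$ with $c := \int \chi\, d\nu \neq 0$. For every $n$,
\[ \int \chi\, dT^n_\star \nu \;=\; \chi(\alpha)^n\, c, \qquad \int \chi\, dm_K = 0, \]
so $\left| \int \chi\, dT^n_\star \nu - \int \chi\, dm_K \right| = |c| > 0$ independently of $n$. Taking $f$ equal to $\mathrm{Re}(\chi \circ \pi)$ or $\mathrm{Im}(\chi \circ \pi)$ (whichever retains a nonzero difference) yields $f \in C(X)$ with $\left| \int f\, dT^n_\star \mu - \int f\, dm \right|$ bounded below by a positive constant for every $n$, obstructing both the density-$1$ subsequence statement and the uniform-average convergence.

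The point at which care is needed is the metrization plus dominated-convergence device in the second paragraph that collapses the $f$-dependent density-$1$ sets into a single $J$; beyond that the corollary is a direct transcription of standard consequences of unique ergodicity combined with Proposition \ref{twistingdescriptionProp}, Proposition \ref{twistingProp}, and Lemma \ref{cesaroLem}.
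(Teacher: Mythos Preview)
Your argument follows the paper's approach: both exploit unique ergodicity of $(P_1,T_\star)$ and apply the uniform ergodic theorem to the observable $F_f(\mu)=|\int f\,d\mu-\int f\,dm|$, then invoke Lemma~\ref{cesaroLem}. You are actually more careful than the paper in one place: the paper simply says ``the second equivalence follows from Lemma~\ref{cesaroLem},'' whereas you spell out the metrization of $P(X)$ and the dominated-convergence step needed to produce a \emph{single} set $J$ working for all $f$ simultaneously.

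For the ``never when $\mu\notin P_1$'' clause the paper argues differently: it uses the norm $\|\cdot\|_\star$ (introduced in Section~\ref{nilrotations}) under which $T_\star$ acts isometrically on $P(K)$, so $T^n_\star\nu$ stays at fixed positive distance from $m_K$. Your character argument is a legitimate alternative, but as written it contains a slip. You claim that for one of $f=\mathrm{Re}(\chi\circ\pi)$ or $f=\mathrm{Im}(\chi\circ\pi)$ the quantity $|\int f\,dT^n_\star\mu-\int f\,dm|$ is bounded below by a positive constant for \emph{every} $n$. This need not hold for a fixed choice: if $\chi(\alpha)$ has irrational argument, then $\chi(\alpha)^n c$ equidistributes on the circle of radius $|c|$, so each of the real and imaginary parts comes arbitrarily close to zero infinitely often. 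The correct statement is that for every $n$ the two differences together satisfy $|\mathrm{Re}(\chi(\alpha)^n c)|+|\mathrm{Im}(\chi(\alpha)^n c)|\geq|c|$; hence both cannot tend to zero along any subsequence, and the sum of the two uniform averages is bounded below by $|c|$, so at least one of them fails to vanish. With this adjustment your argument goes through.
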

\begin{proof}
It is a standard fact that for a uniquely ergodic system, the convergence of $N^{-1} \sum_{n=0}^{N-1} f(T^n x)$ is uniform in $x$ (see for instance \cite{walters}.)  In particular replacing $x$ by $T^{-M} x$ does not change the rate of convergence.  Applying this to the function $F$ defined in the proof of Proposition \ref{twistingProp} gives uniform convergence of the uniform averages.  The second equivalence follows from Lemma \ref{cesaroLem}.

When $\mu \notin P_1$ we follow the argument given at the beginning of this section regarding choices for $P'.$  Specifically, let $\nu$ be the projection of $\mu$ onto $K=K(X,T).$  The norm $\| \cdot \|_\star$ (which will be defined at the beginning of the next section) induces the weak$^\star$ topology on $P(K)$ and, under this norm, $T_\star$ acts by isometries.  Therefore, $\nu \neq m_K,$ $T^n_\star \nu$ is bounded away from $m_K$ (Haar measure on $K$) independently of $n.$  It follows that $T^n_\star \mu$ is bounded away from $m.$
\end{proof}

\begin{example}
Minimal equicontinuous systems are twisting because $P_1 = \{ m \}.$
\end{example}

\begin{example} \label{uebutnotwpeEx1}
There are systems which are uniquely ergodic but not weakly twisting.  For instance, choose a (non-trivial) weakly mixing measure preserving system and use the Jewett-Krieger theorem (see, for instance, \cite{bellowfurstenberg})  to construct a uniquely ergodic topological realization $(X,T).$  Since
$(X,\mB, \mu, T)$ has no measure theoretic Kronecker factor, its maximal equicontinuous factor must be the one point system.  This tells us that $P_1(X) = P(X),$ which cannot be uniquely ergodic unless $X$ is one point (as discussed above.)

This example is not terribly satisfying.  Its construction relies on the (opaque) Jewett-Krieger theorem which produces systems on totally disconnected spaces.  See Question \ref{contskewQ}.
\end{example}

\begin{example}
So far, the explicit examples we have seen of invariant measures on $P_1(X)$ have all been of the form $\delta_\mu$ where $\mu$ is some invariant measure on $X.$  Given some collection $M$ of invariant measures and some measure $\theta$ on $M$ we can always define $\eta = \int_M \delta_\mu d \theta(\mu)$ and obtain another invariant measure on $P_1.$  These are the trivial examples.  For an invariant measure on $P_1$ which does not arise as a convex combination of point-masses at fixed points see Example \ref{nontrivialMeasureOnPONE}.
\end{example}

\begin{example}
Now we give an example of a system which is weakly twisting but not twisting.  Choose a subset $A \subset \ZZ$ having uniform density $1$ and an infinite complement.  Identify $A$ with a point $a \in \{0,1 \}^\ZZ.$  Write $\sigma$ for the shift $( \sigma x(n) = x(n+1) )$ and let $X$ be the closure of $\{ \sigma^n(a) : n \in \ZZ \}.$  Notice $X$ contains the shift invariant point $( \dots, 1,1,1, \dots) =: {\bf 1 },$ and therefore admits the invariant measure $\delta_{\bf 1}.$ It follows that the maximal equicontinuous factor of $(X,\sigma)$ is the one point system.  So, $P_1(X, \sigma) = P(X).$

The sets corresponding to the points of $X$ all have uniform density $1$ and that density is `achieved uniformly'.  That is, for any $\e>0$ there exists a $N$ such that for all $x \in X$, if $b-a > N$ then
\[ \frac{1}{b-a+1} \sum_{i=b}^a x(i) > 1- \e. \]
Fix $M>0$ and let $U = \{ x \in X: x(i) = 1, -M < i < M \}$ be a small neighborhood of ${\bf 1}.$  It follows from the work above, that for all $\e>0$ there exists $N'$ such that for any $x \in X$ and any $b-a > N',$
\[ \frac{1}{b-a+1} \sum_{i=b}^a \chi_U( \sigma^i x) > 1- \e. \]

Let $\mu \in P_1 = P(X).$  Then
\[ \frac{1}{b-a+1} \sum_{i=b}^a \sigma^i_\star \mu(U) =
\frac{1}{b-a+1} \sum_{i=b}^a \int_X \chi_U( \sigma^i x) d \mu(x) \geq
 1- \e. \]
 Since $\int_X \chi_U( \sigma^i x) d \mu(x) = \sigma^i_\star \mu (U) \leq 1,$ we get
 \[ \frac{1}{b-a+1} \sum_{i=b}^a \left| 1 -\sigma^i_\star \mu (U) \right| \leq \e. \]
But $\e$ was arbitrary and $U$ was an arbitrary cylindrical neighborhood of ${\bf 1}.$  So, if $f \in C(X)$ then
\[ \lim_{N \to \infty} \frac{1}{N} \sum_{i=1}^N \left| f({\bf 1}) - \int f d \sigma^i_\star \mu \right| = 0. \]
It follows from Proposition \ref{twistingProp} that $(X,\sigma)$ is weakly twisting and $\delta_{\delta_{\bf 1}}$ is the unique invariant measure on $P_1.$
However, $(X,T)$ is not twisting.  Indeed, choose $n_i \in \ZZ \sm A$ tending to infinity, let $f$ be the (continuous) characteristic function of  $\{ x \in X : x(0) = 1 \}$, and let $\mu = \delta_a.$  Then
\[ \int f d\sigma^{n_i}_\star \mu = f(\sigma^{n_i} a) = a(n_i) = 0 \neq 1 = f({\bf 1}). \]
\end{example}

\begin{question}
Do there exist topological systems admitting an invariant measure of full support, which are weakly twisting but not twisting?
\end{question}

This question, the related Question \ref{twistingRotationQ}, and Question \ref{lpunipotentcocycleQ} are, in the view of the author, the most interesting unresolved problems in this paper.

It is trivial (but pleasing) that $P_1$ is a functor.  To see this, suppose $\Phi:(X,T) \to (Y,S)$ is a factor map of topological systems and write $\pi_X, \pi_Y$ for the projections $X \to K(X,T), Y \to K(Y,S)$ respectively.
Then $\pi_Y \circ \Phi$ is a factor map.  By maximality of $K(X,T)$ we see this map must factor
as $\pi_Y \circ \Phi = \Phi_K \circ \pi_X$ for some unique map $\Phi_K:K(X,T) \to K(Y,S).$  In other words $K$ is a functor.  So
\[ (\pi_Y)_\star( \Phi_\star P_1(X,T) ) = (\Phi_K)_\star ( (\pi_X)_\star P_1(X,T)). \]
which proves $\Phi_\star( P_1(X,T) ) \subseteq P_1(Y,S).$  Write $\Phi_1 = \Phi_\star |_{P_1(X,T)}.$  That $\Phi \mapsto \Phi_1$ respects composition follows from the same statement about $\Phi \mapsto \Phi_\star.$

One subtle thing that should be verified is that $\Phi_1$ is surjective (since factor maps of topological systems are required to be surjective.)  Fix $\nu \in P_1(Y).$  Think of $C(Y),$ and $C(K(X,T))$ as subspaces of $C(X)$ and let $\nu'$ be the functional on $C(Y) + C(K(X,T))$ which agrees with $\nu$ on $C(Y)$ and Haar measure on $C(K(X,T)).$  Apply the Hahn-Banach Theorem to extend $\nu'$ to a linear functional $\mu$ on all of $C(X)$ satisfying $\mu(f) \leq \| f \|.$  Since $\mu(1) =1, \| \mu \| = 1.$  Write $\mu_+, \mu_-$ for the positive and negative parts of $\mu.$  Then
\[ \| \mu_+ \| - \| \mu_- \| = \mu(1) = 1 = \| \mu \| = \| \mu_+ \| + \| \mu_- \|. \]
So $\mu_- = 0,$ and $\mu$ is a probability measure.  Since $\mu$ agrees with Haar measure on $C(K(X,T)),$ it lies in $P_1(X,T).$

\begin{proposition}
Factors of twisting (weakly twisting) systems are also twisting (weakly twisting respectively.)
\end{proposition}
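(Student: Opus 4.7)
The plan is to exploit the functoriality of $P_1$ established in the preceding discussion. Let $\Phi:(X,T) \to (Y,S)$ be a factor map with $(X,T)$ twisting (respectively weakly twisting), and let $\Phi_1 = \Phi_\star|_{P_1(X,T)} : P_1(X,T) \to P_1(Y,S)$ be the induced map. The excerpt already verifies that $\Phi_1$ is continuous and surjective, and from $\Phi \circ T = S \circ \Phi$ we immediately get $\Phi_1 \circ T_\star = S_\star \circ \Phi_1$. Thus $(P_1(Y,S), S_\star)$ is a topological factor of $(P_1(X,T), T_\star)$.

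For the weakly twisting case, I would then simply invoke the fact that a factor of a uniquely ergodic system is uniquely ergodic: since $(P_1(X,T), T_\star)$ has the unique invariant measure $\delta_{m_X}$, the push-forward $(\Phi_1)_\star \delta_{m_X} = \delta_{(\Phi_\star m_X)}$ is the unique invariant measure on $(P_1(Y,S), S_\star)$. (Note that $\Phi_\star m_X$ must equal the unique invariant measure $m_Y$ on $(Y,S)$, since weak twisting of $(X,T)$ implies unique ergodicity by Proposition \ref{twistingImplicationProp}, hence so does weak twisting of the factor.)

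For the twisting case, I would pick an arbitrary $\nu \in P_1(Y,S)$ and use surjectivity of $\Phi_1$ to lift it to some $\mu \in P_1(X,T)$ with $\Phi_\star \mu = \nu$. Then
\[ S^n_\star \nu = S^n_\star \Phi_\star \mu = \Phi_\star T^n_\star \mu. \]
The twisting hypothesis gives $T^n_\star \mu \to m_X$ in the weak$^\star$ topology, so by continuity of $\Phi_\star$ we conclude $S^n_\star \nu \to \Phi_\star m_X = m_Y$, which is exactly the definition of twisting for $(Y,S)$.

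There is no serious obstacle here; the only nonroutine ingredient is the surjectivity of $\Phi_1$, which the paper has already proved via Hahn-Banach. Everything else is a direct application of the functorial definitions of $P_1$ and $T_\star$, together with the classical fact that unique ergodicity passes to factors.
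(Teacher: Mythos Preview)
Your proof is correct and is essentially an unpacked version of the paper's own argument: the paper invokes functoriality of $P_1$ together with the observations that unique ergodicity and the property of having a universal attracting fixed point both pass to factors, which is exactly what you verify explicitly via surjectivity of $\Phi_1$ and continuity of $\Phi_\star$. The only cosmetic difference is that the paper phrases the twisting case abstractly (``universal attractors descend to factors'') rather than writing out the lift-and-push argument.
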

\begin{proof}
Notice that if a system has a fixed point which is a universal attractor, then the same can be said of every factor of that system.  Also notice that a factor of a uniquely ergodic system is uniquely ergodic.
The result now follows from the functoriality of $P_1$ and Proposition \ref{twistingdescriptionProp}.
\end{proof}

The next proposition is probably useless but the proof is too perversely entertaining to omit.

\begin{proposition}
If $(X,T)$ is weakly twisting then the maximal equicontinuous factor of $(P_1,T_\star)$ is trivial.
If $(X,T)$ is twisting then $(P_1,T_\star)$ is twisting.  If $(X,T)$ is weakly twisting but not twisting then the same holds for $(P_1,T_\star).$
\end{proposition}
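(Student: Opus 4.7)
The plan is to handle the three assertions in turn, writing $S := T_\star : P_1 \to P_1$ and $\hat S := S_\star : P(P_1) \to P(P_1)$. For the first assertion, weak twisting means $(P_1, S)$ is uniquely ergodic with unique invariant $\delta_m$, so the maximal equicontinuous factor $K' := K(P_1, S)$ is also uniquely ergodic (as a factor of a uniquely ergodic system). Since $K'$ is invertible and equicontinuous, it decomposes as a disjoint union of minimal sets, each supporting an invariant probability measure by Krylov--Bogolyubov; unique ergodicity collapses these to a single minimal set, so $K'$ is itself minimal. The cited structure theorem then realizes $K'$ as a compact abelian group with $S$ acting by rotation, whose unique invariant is Haar measure. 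But Haar measure on $K'$ equals the pushforward of the point mass $\delta_m$, hence is itself a point mass, which forces $K'$ to be trivial.

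For the second assertion, twisting implies weak twisting (Proposition \ref{twistingImplicationProp}), so the first assertion gives $P_1(P_1, S) = P(P_1)$. For any $\eta \in P(P_1)$ and $F \in C(P_1)$,
\[
\int F \, d(\hat S^n \eta) \;=\; \int F(S^n \mu) \, d\eta(\mu).
\]
Twisting gives $S^n \mu \to m$ pointwise in $\mu$, so $F(S^n \mu) \to F(m)$ by continuity of $F$; bounded convergence yields $\hat S^n \eta \to \delta_m$ in weak-$\star$ for every $\eta \in P(P_1)$, which is twisting of $(P_1, S)$ by Proposition \ref{twistingdescriptionProp}(3).

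For the third assertion, the first assertion again gives $P_1(P_1, S) = P(P_1)$, so weak twisting of $(P_1, S)$ is the same as unique ergodicity of $(P(P_1), \hat S)$. Let $\Theta$ be any $\hat S$-invariant probability on $P(P_1)$. A short Fubini calculation shows the barycenter $\mathrm{bar}(\Theta) := \int \eta \, d\Theta(\eta) \in P(P_1)$ is $S$-invariant, so weak twisting of $(X,T)$ forces $\mathrm{bar}(\Theta) = \delta_m$. Choose $F \in C(P_1)$ with $F \geq 0$, $F(m)=0$, and $F > 0$ off $m$ (possible since $P_1$ is compact metrizable); then $\int \int F \, d\eta \, d\Theta(\eta) = F(m) = 0$ forces $\int F \, d\eta = 0$, hence $\eta = \delta_m$, for $\Theta$-a.e. $\eta$, giving $\Theta = \delta_{\delta_m}$. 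Finally, if $(X,T)$ is not twisting, pick $\mu \in P_1$ with $S^n \mu \not\to m$: then $\delta_\mu \in P(P_1) = P_1(P_1, S)$ but $\hat S^n \delta_\mu = \delta_{S^n\mu} \not\to \delta_m$, so $(P_1, S)$ is not twisting either. The one delicate step is the structure-theorem argument in the first assertion, where I must combine the splitting of invertible equicontinuous systems into minimal sets with the paper's cited realization of minimal ones as compact abelian group rotations; the remainder reduces to routine pushforward and barycenter manipulations.
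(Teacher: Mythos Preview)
Your proof is correct. For the second and third assertions it is essentially the paper's argument: the bounded-convergence computation for the twisting case and the barycenter trick (the paper phrases it as ``the only way to write $\delta_m$ as a convex combination is degenerately,'' which is your $F$-argument in disguise) match exactly, as does the observation that $\delta_\mu$ fails to converge when $\mu$ does.

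The first assertion is where you diverge. The paper argues more directly: weak twisting gives, via Corollary~\ref{wktwistingCor}, that $S^n\mu \to m$ along some subsequence for every $\mu\in P_1$; projecting to $K(P_1,S)$ and using that this factor may be taken isometric, the distance $d(\pi\mu,\pi m)$ is constant along the orbit and hence zero, so $\pi$ is constant and $K(P_1,S)$ is a point. Your route instead pushes unique ergodicity down to $K'$, uses that equicontinuous invertible systems decompose into disjoint minimal sets (each carrying an invariant measure) to force $K'$ minimal, invokes the group-rotation structure theorem, and then notes that Haar measure equals $\pi_\star(\delta_m)=\delta_{\pi(m)}$, a point mass. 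This is a bit more machinery for the same conclusion, but it has the minor virtue of not calling on Corollary~\ref{wktwistingCor}. One notational wrinkle: in your third paragraph, ``$\hat S$-invariant probability on $P(P_1)$'' together with ``$\mathrm{bar}(\Theta)\in P(P_1)$'' places $\Theta$ in $P(P(P_1))$; this is correct (and the paper works at the same level) but the phrasing is momentarily ambiguous.
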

\begin{proof}
By Corollary \ref{wktwistingCor}, for any $\mu \in P_1(X),$ $T^n_\star \mu \to m$ along some sequence.  The same is true for $T^n_\star (\pi \mu).$  Without loss of generality, $K(P_1)$ is an isometric system.  It follows that $\pi(\mu) = \pi(m).$  Therefore $K(P_1)$ is a single point.

By the preceding remark $K(P_1,T_\star)$ is minimal, which is necessary to make sense of $P_1(P_1,T_\star).$
We want to show that $P_1(P_1,T_\star)$ is uniquely ergodic.  Let $\theta$ be an invariant measure on $P_1(P_1,T_\star).$  This is an element of $P_1(P_1(P_1,T_\star), T_{\star \star})).$  The barrycenter $\theta'$ of $\theta$ is an element of $P(P_1)$ given by $\theta' := \int_{P_1(P_1,T_\star)} \eta d \theta(\eta).$
Invariance of $\theta$ gives us invariance of $\theta'.$  Therefore $\theta' = \delta_m.$  But the only way to take a convex combination of measures and get a $\delta$-measure is if the combination is degenerate.  In other words
$\theta = \delta_{\delta_m}.$  This proves $(P_1(P_1,T_\star), T_{\star \star})$ is uniquely ergodic.  Equivalently, $(P_1,T_\star)$ is weakly twisting.

If we additionally assume that $(X,T)$ is not twisting, then there is some measure $\mu \in P_1$ which does not attract to $m.$  It follows that $\delta_\mu$ does not attract to $\delta_m.$  So $P_1$ is not twisting.

Now assume $(X,T)$ is twisting.  Then it is also weakly twisting and once again, we can define $P_1(P_1,T_\star).$  Fix $\eta \in P_1(P_1,T_\star).$  For any $\e>0$ and for any neighborhood $U$ of $m$ in $P_1$ there exists $N$ such that for all $n>N$ we have $T^n_{\star \star} \eta ( U ) > 1-\e.$  It follows that any weak$^\star$ limit $\theta$ of $T^n_{\star \star} \eta$ satisfies $\theta( \{ m \} ) = 1.$  In other words, $\theta = {\delta_m}.$  This proves that $\delta_m$ is the unique attracting fixed point in $(P_1(P_1,T_\star),T_{\star \star}).$  In other words, $(P_1,T_\star)$ is twisting.

\end{proof} 


\section{Minimal rotations on nilmanifolds} \label{nilrotations}

In this section we derive Theorems \ref{nilThm} and \ref{nil2Thm} as easy corollaries of Theorem \ref{nilimprovementThm}.

Let $G$ be a connected, simply connected, nilpotent Lie group with Lie algebra $\g.$  Let $\G$ be a lattice in $G.$
It is well known that the exponential map provides a homeomorphism between $G$ and $\g$ (see \cite{malcev}.)  Write $G_i$ for the lower central series of $G$ and let $l$ be minimum with $G_l = 0.$  Write $X_i = G/ G_i \G.$  
Fix $u \in G$ such that $T(g\G) = ug \G$ is a minimal rotation on $X.$

\begin{theorem} \label{nilimprovementThm}
Let $\mu$ be a probability measure on $X$ which projects to Haar measure on $X_{l-1}.$  Then $T^n_\star \mu$ converges in the weak$^\star$ topology to Haar measure.
\end{theorem}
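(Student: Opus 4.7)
The plan is to exploit the central torus fibration $X\to X_{l-1}$. Because $G_{l-1}$ is the last nontrivial term of the lower central series, the fiber $\TT:=G_{l-1}/(G_{l-1}\cap\G)$ lies in the center of $G$; it therefore acts freely on $X$ by left multiplication and commutes with $T=L_u$. Fourier-decompose $L^2(X)=\bigoplus_{\chi\in\hat\TT}L^2(X)_\chi$ according to the characters of this central torus. By Stone--Weierstrass it suffices to verify weak-$\star$ convergence on each continuous isotypic summand. For $\chi=0$, the function $f$ descends to $X_{l-1}$, and the hypothesis $\pi_\star\mu=m_{l-1}$ together with $\bar T$-invariance of $m_{l-1}$ gives
\[ \int f\circ T^n\,d\mu=\int f\,dm_{l-1}=\int f\,dm \]
identically in $n$.

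The substantive case is $\chi\neq 0$, where $\int f\,dm=0$ and I must show $\int f\circ T^n\,d\mu\to 0$. Choose a measurable section $\sigma:X_{l-1}\to X$, disintegrate $d\mu=d\nu_{\bar x}\,dm_{l-1}(\bar x)$ along fibers, and define the cocycle $\phi_n:X_{l-1}\to\TT$ by $u^n\sigma(\bar x)=\phi_n(\bar x)\sigma(\bar T^n\bar x)$. Using the $\chi$-equivariance of $f$ and the centrality of $\TT$,
\[ \int f\circ T^n\,d\mu=\int_{X_{l-1}}\chi(\phi_n(\bar x))\,F(\bar T^n\bar x)\,\hat\nu_{\bar x}(\chi)\,dm_{l-1}(\bar x), \]
where $F=f\circ\sigma$ and $\hat\nu_{\bar x}(\chi):=\int\chi\,d\nu_{\bar x}$. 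Since the scalar field $\bar x\mapsto\hat\nu_{\bar x}(\chi)$ is in $L^\infty(X_{l-1})$, it suffices to prove that the uniformly bounded sequence $h_n(\bar x):=\chi(\phi_n(\bar x))F(\bar T^n\bar x)$ tends weakly to $0$ in $L^2(X_{l-1})$.

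To do this I would iterate the same Fourier-decomposition step downwards along the tower $X_{l-1}\to X_{l-2}\to\cdots\to X_1=\TT^d$, expanding $F$ and $\chi\circ\phi_n$ against the successive central tori until the question is phrased on the maximal torus factor. After this reduction the problem reduces to showing that integrals of the form $\int_{\TT^d}e^{2\pi i P_n(\bar x)}\,g(\bar x)\,d\bar x$ vanish as $n\to\infty$, where $g\in L^\infty(\TT^d)$ and $P_n$ is a polynomial phase in the torus coordinates whose leading coefficient in $n$ is a nonzero rational-linear combination of the coordinates of the projection of $u$ to $X_1$, and so is genuinely irrational by minimality of the base rotation. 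The Riemann--Lebesgue lemma, applied slicewise in the coordinate against which $n$ grows most strongly, then sends each such integral to $0$; a trigonometric-polynomial approximation transfers the conclusion from the dense subspace back to all of $C(X)$. The main obstacle is the bookkeeping in this reduction: the cocycle $\phi_n$ lives on the nilmanifold $X_{l-1}$ rather than on a torus, so making the polynomial structure of $P_n$ explicit---whether by induction on the degree of nilpotency with the $l=2$ Heisenberg-style computation as the base case, or by working directly in Mal'cev coordinates on $G$---is where the technical heart of the argument sits.
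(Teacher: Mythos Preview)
Your first reduction is correct and genuinely useful: the disintegration and the cocycle formula
\[
\int f\circ T^n\,d\mu=\int_{X_{l-1}}\chi(\phi_n(\bar x))\,F(\bar T^n\bar x)\,\hat\nu_{\bar x}(\chi)\,dm_{l-1}(\bar x)
\]
are right, and showing this tends to zero for each nontrivial $\chi$ is exactly what is needed.  The gap is in what follows.  Your plan to ``iterate the same Fourier-decomposition step downwards'' and eventually land on oscillatory integrals $\int_{\TT^d}e^{2\pi iP_n}g$ is not a bookkeeping issue but the substance of the theorem: the cocycle $\phi_n$ does not live on a torus, the section $\sigma$ need not interact well with the next central torus, and carrying this out honestly amounts to re-proving that $(X,T)$ is mixing on the orthocomplement of $L^2(X_1)$---i.e.\ Green's countable Lebesgue spectrum theorem.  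As written, the proposal stops precisely where the difficulty begins, and you say as much.

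There is a one-line repair that avoids the iteration entirely.  Because your base measure is Haar on $X_{l-1}$, your integral is already an $L^2(X,m)$ inner product: setting $g(t\sigma(\bar x)):=\chi(t)\,\overline{\hat\nu_{\bar x}(\chi)}$ defines a bounded measurable function on $X$ with
\[
\int f\circ T^n\,d\mu=\langle f\circ T^n,\,g\rangle_{L^2(X,m)}.
\]
Since $\chi\neq 0$, $f$ is orthogonal to $L^2(X_{l-1})\supset L^2(X_1)$, and Green's theorem gives $\langle f\circ T^n,g\rangle\to 0$ immediately.  This is exactly the paper's second proof, reached by a slightly different route: rather than disintegrating, the paper \emph{approximates} $\mu$ by an absolutely continuous measure by smearing in the central $G_{l-1}$-direction (which commutes with $T$, so the approximation is uniform in $n$), and then cites Green.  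Your disintegration gives the same $L^2$ pairing without approximation.

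For contrast, the paper's first proof is of an entirely different flavor and does not use harmonic analysis at all.  It shows geometrically that any weak-$\star$ limit of $T^n_\star\mu$ is $G_{l-1}$-invariant, by finding for each $z\in G_{l-1}$ and large $n$ a small $h\in G_{l-2}$ with $zu^n\approx u^nh$ as maps on $X$; this shearing estimate (a direct commutator computation using $[G_{l-2},G]=G_{l-1}$) forces $zT^n_\star\mu\approx T^n_\star\mu$.  That argument is self-contained and, unlike the spectral route, transports to the non-algebraic skew products of Section~\ref{skewproducts}.
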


We will give two proofs of this theorem.  The second proof is shorter and relies on the fact that rotations on nilmanifolds have countable Lebesgue spectrum on the orthocomplement of $L^2(X_1)$ (see Green's article: chapter 5 in \cite{auslandergreenhahn}.)  The first proof is geometric in nature and shows how shearing causes invariance.  The first proof is longer, but it has two advantages: it stands alone and, more importantly, the method can be adapted to other situations.  In particular, the geometric method of the first proof applies to systems with far less algebraic structure, like the skew products discussed in section 4.

Before proving the theorem, we define a metric inducing the weak$^\star$ topology which makes some observations easier.  Strictly speaking, this is unnecessary.  However, it allows us to avoid explicitly mentioning test functions.  The author finds this notation convenient and hopes the reader will as well.

Given $\mu \in C(X)^\star$ define the norm.
\[ \| \mu \|_\star := \sup \{ \int f d \mu : f \in C(X), |f| \leq 1, f \text{  1-Lipschitz } \}. \]
The reader should be able to easily verify the triangle inequality.  This turns $C(X)^\star$ into a (usually) not complete normed linear space with the following nice property: let $B \subset C(X)^\star$ be bounded in operator norm.  Then the topology induced on $B$ by $\| \cdot \|_\star$ is the weak$^\star$ topology.  As we will see, simple geometric properties of maps on $X$ often translate into equally simple properties of the induced maps on $C(X)^\star.$

To prove that $\| \cdot \|_\star$ induces the correct topology, recall that $B$ is metrizable in the weak$^\star$ topology (really all the work is hidden in this fact.)  So it suffices to prove that a sequence converges under one topology if and only if it converges under the other.  Suppose $\mu_n$ is a sequence in $B.$  If $\mu_n$ converges to $\mu$ in the weak$^\star$ topology, then $\int f d\mu_n$ converges uniformly to $\int f d \mu$ for all $f$ in a compact subset of $C(X).$  Here we are using the fact that $f \mapsto \int f d\nu$ is itself $1$-Lipschitz.  But the set of all $1$-Lipschitz $f \in C(X), |f| \leq 1$  is compact.  So this tells us that $\| \mu_n - \mu \|_\star \to 0.$  Conversely, suppose $\| \mu_n - \mu \|_\star \to 0.$  Then for any Lipschitz function $f$ we have $| \int f d\mu_n - \int f d\mu | \to 0.$  But the Lipschitz functions are uniformly dense in the continuous functions.  So the same holds for any $f \in C(X).$

One obvious fact about $\| \cdot \|_\star$ is that isometries of $X$ induces isometries on $(C(X)^\star, \| \cdot \|_\star).$  Another observation we will need is that if $\Phi:X \to X$ moves each point by at most $\e,$ then 
$\| \Phi_\star( \mu ) - \mu \|_\star \leq \e \| \mu \|$ (where $\| \mu \|$ denotes the usual operator norm on $C(X)^\star.$)
This metric also has nice properties with respect to convex combinations : if we write $\mu = \int \mu_x d \nu(x)$ then
\[ \| \mu \|_\star \leq \int \| \mu_x \|_\star d \nu(x). \]
Suppose $X = \coprod_i C_i$ is a partition of $X$ into sets of positive measure.  Then we can define the conditional measures $\mu_x = \mu(C_i)^{-1} \mu|_{C_i}$ when $x \in C_i.$  Letting $\nu = \mu$ and applying the principle above yields
\[ \| \mu \|_\star \leq \sum_i \mu(C_i) \| \mu(C_i)^{-1} \mu|_{C_i} \|_\star \leq \sum_i \| \mu|_{C_i} \|_\star. \]

We shall also need to define some metrics on groups and their quotients.  Let $d_G$ be a right-invariant metric on $G.$  One can construct this by choosing an inner product on the Lie algebra of $G$ and then transporting this via right-translation to a Riemannian metric on $G.$  Then one defines $d_G(g,h)$ in the usual way by taking the infemum of lengths of differentiable curves connecting $g$ to $h.$  This allows us to define a metric on $X$ by
\[ d_X(a\Gamma, b\Gamma) = \inf_{\gamma \in \Gamma} d_G(a \gamma, b). \]
This metric has the property that $d_X(a\Gamma, b\Gamma)$ is equal to the size of the smallest $g$ such that $ga\Gamma = b\Gamma,$ where by ``size" we mean $d_G(g,1).$

Finally, recall the following results of Malcev \cite{malcev}.   We can choose a {\it canonical basis} for $\G.$  This is a collection $\gamma_1, \dots, \gamma_k \in \G$ such that
\begin{enumerate}
\item every element of $G$ can be written uniquely as a product $\gamma_1^{e_1} \cdots \gamma_k^{e_k}$ (where the $e_k$ are real numbers.)
\item for each $i$ the set of all elements of the form $\gamma_i^{e_i} \cdots \gamma_k^{e_k}$ is a normal subgroup $G_{(i)}$ of $G$
\item for each $i,$  $G_{(i)}/G_{(i+1)}$ is isomorphic to $\RR$
\item the sequence $G_{(i)}$ is a refinement of $G_j$  (that is, $G_j$ is a subsequence of $G_i.$)
\end{enumerate}

Both proofs of Theorem \ref{nilimprovementThm} begin the same way.

\ \\
{\it Proof of Theorem 3.1.}\ Choose a Malcev basis $\gamma_1, \gamma_2, \dots, \gamma_k$ as explained above and suppose $\gamma_{k'+1}, \dots, \gamma_k$ are those coordinates lying in $G_{l-1}.$  Let $F$ be the standard fundamental region for the action of $G$ on $X.$  That is,
\[ F = \{ \gamma_1^{e_1} \gamma_2^{e_2} \cdots \gamma_k^{e_k} : 0 \leq e_i < 1 \}. \]

Choose a finite partition of $[0,1)^{k-k'}$ into sets $C_1', C_2', \dots$  having
small diameter.  For each $i,$ let $C_i$ be the set of all $g = \gamma_1^{e_1} \gamma_2^{e_2} \cdots \gamma_k^{e_k} \in F$ with $(e_{k'+1}, \dots, e_k) \in C'_i.$  This is a partition of $F$ (or $X$ if we wish.)  Arbitrarily choose $c_i' = (f_{k'+1}, \dots, f_k) \in C_i'$ and define a map $\Phi_i: C_i \to C_i$ by
\[ \gamma_1^{e_1} \gamma_2^{e_2} \cdots \gamma_k^{e_k} \mapsto
\gamma_1^{e_1} \gamma_2^{e_2} \cdots \gamma_{k'}^{e_{k'}} \gamma_{k'+1}^{f_{k'+1}} \cdots \gamma_k^{f_k}. \]

Fix $\epsilon > 0.$  If the $C_i'$ are sufficiently small than each $C_i$ has diameter at most $\epsilon$ (in the right invariant metric on $G.$)  So each $\Phi_i$ moves points by at most $\epsilon.$   

Let $\mu_i$ be the pushforward of $\mu|_{C_i}$ under $\Phi_i.$  This is (probably) not a probability measure.  However $\sum_i \mu_i$ is a probability measure.  In fact, we will see (if we think of this measure on $X$) it closely approximates $\mu$ in a way that's invariant under multiplication by $u.$

Define $\Phi(g) = \Phi_i(g)$ when $g \in C_i.$  Then $\Phi$ moves each point by at most $\epsilon$ and only in the isometric direction.  So $T^n \Phi$ and $T^n$ are point-wise $\epsilon$-close.  Therefore
$ \| T^n_\star(\mu) - T^n_\star( \sum_i \mu_i) \|_\star  = \| T^n_\star(\mu) - T^n \Phi_\star(\mu) \|_\star \leq \epsilon.$

As explained above, if we project the canonical coordinates $\gamma_1, \gamma_2, \dots$ to $G/G_{l-1}$ then, except for $\gamma_{k'+1}, \dots, \gamma_k$ which vanish, we get canonical coordinates for the lattice $\Gamma G_{l-1}$ in $G/G_{l-1}.$  Let $Q_i$ be the image of $\Phi_i.$  Each $Q_i$ is a Euclidean cube in $F,$ which is bijectively mapped by $\pi$ to the standard fundamental domain $F_{l-1}$ in $G/G_{l-1}$ for $X_{l-1}$ with respect to the quotient coordinates.  Haar Measure $m_{l-1}$ on $X_{l-1}$ is the same as Lebesgue measure (with respect to canonical coordinates.) So, if we equip the cube $Q_i$ with Lebesgue measure $\lambda_i$ (of the appropriate dimension and with respect to canonical coordinates in $G,$) we see that $\pi |_{Q_i} : Q_i \to F_{l-1}$ is measure preserving.  Also notice that $\pi |_F: F \to F_{l-1}$ preserves $\mu$ by assumption and hence also preserves $\sum_i \mu_i$
(i.e. these measure project to Haar measure on $F_{l-1}.$)

Fix $j$ and let $N_j \subset Q_j$ be a $\lambda_j$-null set.  Let $N = \pi(N_j)$ and let $N_i = ( \pi|_{Q_i})^{-1}(N).$
Then
\[ 0 = \lambda_j(N_j) = m_{l-1}(N) = (\pi_\star \sum_i \mu_i)(N) = \sum_i \mu_i ( \pi^{-1} N)
= \sum_i \mu_i (N_i). \]
It follows that $\mu_j(N_j) = 0.$  We have proven that each $\mu_i$ is absolutely continuous with respect to $\lambda_i.$  The Radon-Nikodym Theorem allows us to write $d\mu_i = f'_i d \lambda_i$ where $f'_i$ is a measurable function on $Q_i.$  Note that $0 \leq f'_i \leq 1$ $\lambda_i$-almost everywhere.

The proof now continues in two ways.

\begin{proof}[First proof of Theorem \ref{nilimprovementThm}]
The commutators $[v,h]=z$ where $v \in G_{l-2}$ and $h \in G$ generate $[G_{l-2}, G]=G_{l-1}.$  We will show that when $n$ is large, $T^n_\star \mu$ is nearly invariant under the action of $z.$  By finite dimensionality, the same is then true for the action of any element of $G_{l-1}.$

Choose a continuous function $f_i$ on $Q_i, |f_i| \leq 1$ which agrees with $f_i'$ on a set $S_i \subseteq Q_i.$  Make this choice so that
$ \sum_i \mu_i (S_i) > 1- \epsilon. $  Then $\sum_i \mu_i$ and $\nu$ agree on the set $\bigcup_i S_i,$ and both give it measure at least $1-\epsilon.$

Write $d\nu = \sum_i f_i d \lambda_i.$  Then
\[ \| T^n_\star \mu - T^n_\star \nu \|_\star \leq  \| T^n_\star \mu - T^n_\star ( \sum_i \mu_i ) \|_\star
+ \| T^n_\star( \sum_i \mu_i - \sum_i f_i d \lambda_i ) \|_\star < 2 \epsilon. \]
 
We want to show that $T^n_\star \mu$ is nearly invariant under $z$ when $n$ is large.  The calculation above shows us that it suffices to show $T^n_\star \nu$ is nearly invariant under $z.$  The advantage of using $\nu$ instead of $\mu$ is that it is supported on geometrically nice pieces $Q_i$ of $F$ on which it is given by continuous density functions.  It is easier to understand what happens to such a measure when perturbed.

Write $u=\gamma_1^{e_1} \cdots \gamma_a^{e_a} g$ where $\gamma_1, \dots, \gamma_a$ are the elements of the Malcev basis of $\Gamma$ lying in $G \sm G_1$ and $g \in G_1.$
Let $v \in G_{l-2}$ be given by $v = \eta_1^{f_1} \cdots \eta_b^{f_b}$ where $\eta_1, \dots, \eta_b$ are the elements of the Malcev basis of $\Gamma$ lying in $G_{l-2}.$   Then
\[
[v, u^n] = [\eta_1^{f_1} \cdots \eta_b^{f_b}, (\gamma_1^{e_1} \cdots \gamma_a^{e_a} g)^n]
= \left( \prod_{i,j} [\eta_j, \gamma_i]^{f_j e_i} \right)^n.
\]
The $[\eta_j,\gamma_i]$ may not be linearly independent in $G_{l-1}.$  In fact, some must vanish.  But, since the one parameter subgroups through the $\gamma_1, \dots, \gamma_a$ generate $G$ and since the one parameter subgroups through the $\eta_j$ generate $G_{l-2},$ we know the one parameter subgroups through the $[\eta_j,\gamma_i]$ generate $G_{l-1}.$  In other words, the map
\[
\RR^{ba} \owns (t_{j,i}) \mapsto \prod_{i,j} [ \eta_j, \gamma_i]^{t_{j,i}} \in G_{l-1}
\]
is surjective.  It induces a map $\RR^{ba}/ \ZZ^{ba} \to G_{l-1} / \Gamma_{l-1}$ (where $\Gamma_{l-1}$ is the last nonzero term in the lower central series for $\Gamma.$)  Now let us choose the $f_j$ such that $\{ f_j e_i: j,i \}$ together with $1$ forms a collection of numbers which is linearly independent over $\QQ.$  To do this, choose $f_1$ to be transcendental over $1,e_1, \dots, e_a.$  Then choose $f_2$ to be transcendental over $1, e_1, \dots, e_a, f_1,$ etc.  Then, a $\QQ$-linear relationship between the $e_i f_j$ would allow us to solve for $f_j$ (with $j$ maximum) in terms of $1, e_1, \dots, e_a, f_1, \dots, f_{j-1},$ which would contradict the choice of $f_j.$

It follows that the backwards orbit $\{ n(f_j e_i)_{j,i} : n \in -\NN \}$ is dense in $\RR^{ba} / \ZZ^{ba}.$  So, it has dense image in $G_{l-1} / \Gamma_{l-1}$ which, in turn, surjects onto $G_{l-1}/ (\Gamma \cap G_{l-1})$ where it has dense image as well.  We have chosen an element $v \in G_{l-2}$ such that $\{ [v,u^n] : n < 0 \} $ has dense image in $G_{l-1}/(\Gamma \cap G_{l-1}).$

The commutator restricts to a continuous map $G_{l-2} \times G \to G_{l-1}$ which induces a map 
\[
G_{l-2}/G_{l-1} \times G/G_1 \to G_{l-1}
\]
of real vector spaces.  It follows from continuity and the identity $[xy,z] = [x,z]^y [x,y],$ that this map is bilinear.

Let $\lambda > 0$ and observe that $[v^\lambda, u^{\lambda^{-1} n}] = [v,u^n].$  This tells us that there exists some $N < 0$ such that for all $n<N$ we can choose $h \in G_{l-2}$ and $\gamma \in \Gamma \cap G_{l-1}$  such that $d_G(h,1) < \delta$ and  $d_G([h,u^n] \gamma, z) < \delta.$  We now use centrality of $\gamma$ and right invariance of the metric to get
\begin{align*}
d_G(zu^{-n}, u^{-n}h \gamma) &=  d_G(z, u^{-n}hu^n \gamma) \\
&\leq  d_G(z, [h,u^n]\gamma) + d_G([h,u^n]\gamma, u^{-n}hu^n \gamma) \\
&<  2 \delta.
\end{align*}

For any $g \in G$ we have
\[ 2\delta > d_X( zu^{-n} g \G, u^{-n} h \gamma g \G) = d_X( zu^{-n} g \G, u^{-n} h g \G). \]
In other words, the actions of $zu^{-n}$ and $u^{-n}h$ on $X$ are nearly the same.

To summarize, we have shown that, for all $\delta>0$ there exists a large integer $N'$ (equal to $-N$) such that for all $n>N'$ there is some $h \in G_{l-2}$ with $d_G(h, 1) < \delta$ and $d_X(zu^n.x, u^nh.x) < 2 \delta$ for all $x \in X.$

Now we study the push-forward of $\nu$ under $h.$  Given $g \in G$ we can use the canonical coordinates to write
\[ g = \gamma_1^{e_1} \gamma_2^{e_2} \cdots \gamma_{k'}^{e_{k'}} \zeta \text{ \ \ and \ }
hg = \gamma_1^{e'_1} \gamma_2^{e'_2} \cdots \gamma_{k'}^{e'_{k'}} \zeta' \]
where $\zeta, \zeta' \in G_{l-1}.$  Write $\kappa(g) = \zeta (\zeta')^{-1}.$  Write $\alpha$ for $\e$ divided by the total number of cells $C_i.$  Since $F$ is compact, by requiring $h$ be sufficiently close $1$ (i.e. by letting $\delta$ be sufficiently small in the argument above) we can achieve $d(\kappa(g), 1) < \alpha$ for all $g \in F.$  Also require $\delta < \alpha.$
Let $\nu'$ be the push-forward of $\nu$ under $g \mapsto hg,$ and let $\nu''$ be the push-forward of $\nu$ under $g \mapsto \kappa(g)hg.$  Since these two maps are point-wise $\alpha$-close and differ only in the isometric direction, as in the arguments above, we have $\| T^n_\star(\nu') - T^n_\star (\nu'') \|_\star < \alpha$ for all $n.$

Let $R_i := \{ \kappa(g)hg : g \in Q_i \} \cap Q_i.$  Both $h$ and $\kappa(g)$ are within $\alpha$ of $1$ by assumption, so $d(\kappa(g)hg, g) < 2 \alpha.$  Recall that $Q_i$ is a cube with respect to canonical coordinates that was created by fixing all coordinates lying in $G_{l-1}.$  Loosely speaking, $Q_i$ is a cube lying in a particular plane in $G.$
By the choice of $\kappa(g)$ we know that for $g \in Q_i, \kappa(g)hg$ lies in the same plane.  Since $Q_i$ has side length $1,$ it follows that $R_i$ contains a cube of the same dimension, centered in $Q_i,$ and having side length $1-4\alpha.$  Since $Q_i$ has dimension $k'$ we conclude $\lambda_i( R_i) \geq (1-4\alpha)^{k'}.$  Similarly
\[ \lambda_i( \{ \kappa(g)hg : g \in Q_i \} \cup Q_i ) \leq (1+4\alpha)^{k'} . \]
Therefore
\[ \lambda_i( \{ \kappa(g)hg : g \in Q_i \} \Delta Q_i ) \leq (1+4\alpha)^{k'} - (1-4\alpha)^{k'} =: \rho. \]

By construction, $\nu''|_{R_i}$ is absolutely continuous with respect to $\lambda_i$ and has density function $f''_i(g) := f_i(\kappa(g)hg).$  The functions $f_i$ form an equi-continuous family.  So, by requiring $\kappa(g)$ be sufficiently close $1$ we may assume $|f_i(g) - f''_i(g)|< \alpha$ for all $i$ and all $g \in R_i.$  We now have
\begin{align*}
\| T^n_\star \nu - T^n_\star \nu'' \|_\star
&= \| \sum_i (f_i \circ T^{-n}) dT^n_\star \lambda_i - \sum_i (f''_i \circ T^{-n}) dT^n_\star \lambda_i \|_\star \\
&\leq  \rho + \sum_i \| f_i \circ T^{-n} - f''_i \circ T^{-n} \| \lambda_i(R_i) \\
&= \rho + \sum_i \| f_i - f''_i \| \\
&\leq \rho + \sum_i \alpha = \rho + \e.
\end{align*}

Now we overestimate $\| T^n_\star \mu - z.T^n_\star \mu \|_\star$ by collecting results and applying the triangle inequality to the sequence of measures $T^n_\star \mu, T^n_\star \nu, T^n_\star \nu'', T^n_\star \nu' = z.T^n_\star \mu.$  This calculation yields $2 \e+ (\rho + \e) + \alpha.$ which tends to $0$ as $\e$ tends to $0.$

This implies that any weak$^\star$ limit $\theta$ of $T^n_\star \mu$ is invariant under $G_{l-1}.$  Write $\pi: G/\G \to G/G_{l-1} \G$ and let $\mA$ be the inverse image of the Borel $\sigma$-algebra on $X_{l-1}$ under $\pi$ (this is the algebra of $G_{l-1}$ invariant sets.) We then have that for $\theta$-almost every $x \in X$ the conditional measure $\theta_x^{\mA}$ is equal to Haar measure $\lambda_{\pi x}$ on the torus $\pi^{-1} \pi(x).$  Write $m_{l-1}$ for Haar measure on $X_{l-1}.$ By assumption $\pi_\star \mu = m_{l-1},$ so $\pi_\star \theta = m_{l-1}$ as well.  Therefore
\[
\theta = \int_X \theta_x^{\mA} d \theta(x)
= \int_{X_{l-1}}  \lambda_y d \pi_\star \theta(y)
= \int_{X_{l-1}}  \lambda_y d m_{l-1}(y)
= m.
\]
\end{proof}

\begin{proof}[Second proof of Theorem \ref{nilimprovementThm}]
Choose a continuous density function $\beta$ on $G_{l-1}$ such that $\beta \geq 0, \int_X \beta d m_{G_{l-1}} = 1,$ and $\beta \equiv 0$ outside an $\e$-neighborhood of $1.$  Define $f_i \in C(X)$ by $f_i(gz \Gamma) = f'_i(g) \beta(z)$ for $g \in C_i$ and $z$ in the $\e$-ball around $1$ in $G_{l-1}$ and let $f_i = 0$ elsewhere.  When $\e$ is smaller than the injectivity radius of $X$ this is well defined.  Notice that $f_i \in L^2(X).$

Since $d \nu_i := f_i dm$ can be perturbed in the central directions to yield $f'_i d \lambda_i,$ the two do not deviate from one another under the application of $T_\star.$  More formally, the map $zg \mapsto g$ which collapses $B^{G_{l-1}}_\e(1)C_i$ to $C_i$ takes $d\nu_i = f_i dm$ to $\mu_i = f'_i d \lambda_i$ and moves each point by at most $\e.$  Therefore,
\begin{align*}
\| T^n_\star \mu_i - T^n_\star \nu_i \|_\star &=  \| \mu_i - \nu_i \|_\star = \| f'_i d \lambda_i - f_i dm \|_\star \\
&\leq  \e \int f_i d \lambda_i, \text{ \ so \ } \\
\| T^n_\star (\sum_i \mu_i) - T^n_\star (\sum_i \nu_i) \|_\star &\leq  \e.  \text{ \ Finally, } \\
\| T^n_\star \mu - T^n_\star (\sum_i \nu_i) \|_\star &<  2\e.
\end{align*}

By construction $\nu := \sum_i \nu_i$ is absolutely continuous with respect to $m,$ so we can write $d \nu = f dm$.  Since $|f_i| < 1,$ $f$ is bounded, hence in $L^2(X).$  Being the density function of a probability measure, $\int f dm = 1.$  Let $\varphi \in C(X)$ be $1$-Lipschitz with $\sup | \varphi| \leq 1.$  Then
\begin{align*}
\lim_{n \to \infty} \int \varphi d T^n_\star \nu &= 
\lim_{n \to \infty} \int \varphi \circ T^n f dm
= \lim_{n \to \infty} \left< \varphi \circ T^n, f \right> \\
&=  \int \varphi dm \int f dm = \int \varphi dm.
\end{align*}
where the third equality comes from the fact that $(X,T)$ has countable Lebesgue spectrum on the orthocomplement of $L^2(X_1).$

Finally, let $L$ be any limit value of $\int \varphi d T^n_\star \mu.$  Then
\begin{align*}
\left| L - \int \varphi dm \right| &\leq 
\left| L - \lim_{n \to \infty} \int \varphi d T^n_\star \nu \right|
+ \left| \lim_{n \to \infty} \int \varphi d T^n_\star \nu - \int \varphi dm \right| \\
&<  \limsup_{n \to \infty} \| T^n_\star \mu - T^n_\star \nu \|_\star + 0 \\
&= 2 \e.
\end{align*}
Since $\e$ was arbitrary, It follows that $\lim_{n \to \infty} \int \varphi dT^n_\star \mu = \int \varphi dm.$  Since $\varphi\in C(X)$ was arbitrary, we have shown $\lim_{n \to \infty} T^n_\star \mu = m,$ as desired.
\end{proof}

\begin{corollary}
Let $\mu$ be a probability measure on $X$ whose projection onto
$X_{l-1}$ is absolutely continuous with respect to Haar measure.  Then any weak$^\star$ limit of $T^n_\star \mu$ is invariant under $G_{l-1}.$
\end{corollary}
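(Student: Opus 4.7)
The plan is to reduce to the case where $\pi_\star \mu$ has bounded density, where the first proof of Theorem \ref{nilimprovementThm} adapts after a single inequality is relaxed, and then to recover the general $L^1$ case by a truncation limit. Write $\pi_\star \mu = \phi\, m_{l-1}$ with $\phi \in L^1(m_{l-1})$. For $M > 0$ set $c_M = \int \min(\phi, M)\, dm_{l-1}$ and define a probability measure on $X$ by
\[
d\tilde\mu_M = c_M^{-1} \min\bigl(1,\, M / \phi(\pi(\cdot))\bigr)\, d\mu
\]
(the ratio is $\mu$-almost everywhere defined, since $\pi_\star \mu$ is carried on $\{\phi > 0\}$). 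A direct Fubini computation shows $\pi_\star \tilde\mu_M = c_M^{-1} \min(\phi, M)\, m_{l-1}$, whose Radon--Nikodym density with respect to $m_{l-1}$ is bounded by $M/c_M$, and dominated convergence gives $\|\mu - \tilde\mu_M\|_{\mathrm{TV}} \to 0$ as $M \to \infty$.

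Next I would rerun the first proof of Theorem \ref{nilimprovementThm} with $\tilde\mu_M$ in place of $\mu$. The only place the hypothesis $\pi_\star \mu = m_{l-1}$ is used in bounding the fiber densities is the inequality ``$0 \leq f_i' \leq 1$''; when $\pi_\star \tilde\mu_M$ has density bounded by $M' := M/c_M$, the same argument yields $0 \leq f_i' \leq M'$. Every subsequent estimate (the Lusin approximation of $f_i'$ by continuous $f_i$, the commutator argument producing $h \in G_{l-2}$ with $[h, u^n]$ approximating a prescribed central element $z$, and the change-of-variables bound using equicontinuity of the $f_i$) carries through with $M'$ appearing only as a harmless multiplicative constant. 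One concludes that $\|T^n_\star \tilde\mu_M - z_\star T^n_\star \tilde\mu_M\|_\star \to 0$ as $n \to \infty$ for every $z \in G_{l-1}$, so every weak$^\star$ limit of $T^n_\star \tilde\mu_M$ is $G_{l-1}$-invariant. The closing step of the original proof, which invokes $\pi_\star \tilde\mu_M = m_{l-1}$ to force $\theta = m$, genuinely fails now, but the corollary only asks for $G_{l-1}$-invariance.

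Finally, let $\theta$ be a weak$^\star$ limit point of $T^n_\star \mu$ along some $n_k \to \infty$. By compactness and a diagonal argument, pass to a subsequence along which $T^{n_k}_\star \tilde\mu_M \to \theta_M$ weak$^\star$ for every $M$, and each $\theta_M$ is $G_{l-1}$-invariant by the previous paragraph. Because $T_\star$ contracts total variation and $\|\cdot\|_\star \leq \|\cdot\|_{\mathrm{TV}}$, one gets $\|\theta - \theta_M\|_\star \leq \|\mu - \tilde\mu_M\|_{\mathrm{TV}} \to 0$, so $\theta_M \to \theta$ weak$^\star$; and $G_{l-1}$-invariance is preserved under weak$^\star$ limits, since each $z \in G_{l-1}$ acts by a homeomorphism of $X$ and hence $z_\star$ is weak$^\star$-continuous on $P(X)$. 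The main obstacle is really just the bookkeeping in the middle step: no new idea is needed, but one must carefully check that every estimate in the original proof remains meaningful when the fiber density is merely bounded rather than bounded by $1$.
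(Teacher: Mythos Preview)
Your proposal is correct and follows the same route as the paper, which simply records that the $G_{l-1}$-invariance conclusion is already established inside the first proof of Theorem \ref{nilimprovementThm} before the hypothesis $\pi_\star\mu = m_{l-1}$ is invoked in the final paragraph. Your truncation-to-bounded-density step is a clean way to make the paper's one-line proof honest, though one could equally well approximate the $L^1$ densities $f_i'$ directly by continuous (hence bounded) functions and carry the resulting bound through the same estimates.
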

\begin{proof}
This follows from the proof of Theorem \ref{nilimprovementThm}.
\end{proof}

\begin{corollary} \label{niltwistingCor}
Suppose $G$ is 2-step nilpotent group and $T$ is a minimal nilrotation on a compact nilmanifold $X=G/\Gamma.$
Then $(X,T)$ is twisting.
\end{corollary}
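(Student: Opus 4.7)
The plan is to recognize that Corollary \ref{niltwistingCor} is essentially an immediate consequence of Theorem \ref{nilimprovementThm} once we identify the maximal equicontinuous factor of $(X,T)$ with the maximal torus factor $X_1$. Concretely, twisting means that every $\mu \in P_1(X,T)$ satisfies $T^n_\star \mu \to m$, where $P_1$ is the set of measures projecting to Haar measure on $K(X,T)$. So what I need to do is reinterpret the hypothesis of Theorem \ref{nilimprovementThm} (projection to $m_{l-1}$ on $X_{l-1}$) as exactly the condition of lying in $P_1$.

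First I would observe that the degree of nilpotency hypothesis gives $l = 2$ (since $G_2 = \{1\}$), so $X_{l-1} = X_1$ is the maximal torus factor. Therefore the conclusion of Theorem \ref{nilimprovementThm} applied in this setting reads: any probability measure on $X$ projecting to Haar measure on $X_1$ has $T^n_\star \mu \to m$.

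Next I would identify $K(X,T)$ with $X_1$. The induced rotation on $X_1 = G/G_1\Gamma$ is translation by $\pi(u)$ in the compact connected abelian Lie group $G/G_1\Gamma$, so it acts by isometries in a translation-invariant metric, making $X_1$ an equicontinuous factor of $X$. For maximality, note that $X \to X_1$ is a principal torus bundle with fiber $G_1/(\Gamma \cap G_1)$, and the shearing of fibers across this bundle prevents any larger equicontinuous quotient from existing; more formally, any equicontinuous factor $(Z,T)$ must be a rotation on a compact abelian group, and by the universal property of $X_1$ as a quotient identifying the commutator direction (which is forced to act trivially on any equicontinuous factor because commutators of translations in $G$ commute with themselves but grow unboundedly in operator difference on test functions), such a $Z$ factors through $X_1$. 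Thus $K(X,T) = X_1$ with Haar measure $m_K = m_1$.

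Finally, putting these pieces together: fix any $\mu \in P_1(X,T)$. By definition, $\pi_\star \mu = m_K = m_1$, i.e., $\mu$ projects to Haar measure on $X_{l-1} = X_1$. Theorem \ref{nilimprovementThm} then yields $T^n_\star \mu \to m$ in the weak$^\star$ topology. Since this holds for every $\mu \in P_1$, the system $(X,T)$ is twisting by definition. The only real content beyond citing Theorem \ref{nilimprovementThm} is the identification $K(X,T) = X_1$, and this is where I expect the main (though standard) work to live; everything else is a definition chase.
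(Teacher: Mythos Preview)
Your proposal is correct and follows exactly the paper's approach: the paper's proof is the single line ``Apply Theorem \ref{nilimprovementThm},'' and you have simply unpacked what that entails in the $2$-step case. One small simplification: you do not need the full identification $K(X,T)=X_1$ with its hand-wavy maximality argument---it suffices that $X_1$ is \emph{an} equicontinuous factor (hence a factor of $K$), so any $\mu\in P_1$ automatically projects to Haar on $X_1=X_{l-1}$, and Theorem \ref{nilimprovementThm} applies.
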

\begin{proof}
Apply Theorem \ref{nilimprovementThm}.
\end{proof}

\begin{corollary} \label{nilweaktwistingCor}
Minimal nilrotations are weakly twisting.
\end{corollary}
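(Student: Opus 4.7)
I would prove this by induction on the nilpotency degree $l$ of $G$. The base cases are immediate: when $l = 1$, the group $G$ is abelian so $X$ coincides with its own maximal torus factor, giving $P_1(X) = \{m\}$; when $l = 2$, Corollary \ref{niltwistingCor} together with Proposition \ref{twistingImplicationProp} (twisting implies weak twisting) yields the result directly.

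For the inductive step, assume the statement holds for all minimal nilrotations of degree below $l$, and consider the factor map $\pi: X \to Y := X_{l-1} = G/G_{l-1}\Gamma$, which is itself a minimal nilrotation of degree $l - 1$. Because $G_{l-1} \subseteq G_1$, the groups $G$ and $G/G_{l-1}$ have the same abelianization, so $X$ and $Y$ share the same maximal torus factor $X_1$ and hence (by minimality of both systems) the same maximal equicontinuous factor. Consequently, by the functoriality of $P_1$ discussed in Section \ref{twisting}, $\pi_\star$ restricts to a factor map $\pi_1: (P_1(X), T_\star) \to (P_1(Y), T_\star)$.

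Now let $\theta$ be any $T_\star$-invariant probability measure on $P_1(X)$. Then $(\pi_1)_\star \theta$ is $T_\star$-invariant on $P_1(Y)$, so by the inductive hypothesis it equals $\delta_{m_Y}$ where $m_Y$ is Haar measure on $Y$. Therefore $\theta$ is concentrated on the set $\{\nu \in P_1(X) : \pi_\star \nu = m_Y\}$. Each such $\nu$ projects to Haar measure on $X_{l-1}$, so Theorem \ref{nilimprovementThm} gives $T^n_\star \nu \to m$ in the weak$^\star$ topology. For any $F \in C(P_1(X))$, this yields $F(T^n_\star \nu) \to F(m)$ for $\theta$-almost every $\nu$; dominated convergence together with the $T_\star$-invariance of $\theta$ then forces
\[ \int F \, d\theta = \lim_{n \to \infty} \int F(T^n_\star \nu) \, d\theta(\nu) = F(m), \]
so $\theta = \delta_m$. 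This establishes unique ergodicity of $(P_1(X), T_\star)$, i.e.\ weak twisting of $(X,T)$. The only conceptual point that needs to be checked carefully is the compatibility of maximal equicontinuous factors between $X$ and $X_{l-1}$, which is what lets the induction transport $P_1$ along the factor map and reduce everything to Theorem \ref{nilimprovementThm} as a black box; the rest is a routine dominated-convergence argument.
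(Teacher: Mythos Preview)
Your proof is correct and follows essentially the same strategy as the paper's. The paper's argument stratifies $P_1(X)$ directly into the nested sets $P_i = \{\mu : \pi_{i\star}\mu = m_{X_i}\}$ and shows that any $T_\star$-invariant $\theta$ satisfies $\theta(P_i)=1$ for every $i$ by iterating Theorem~\ref{nilimprovementThm} through the tower $P_1\supset P_2\supset\cdots\supset P_l=\{m\}$; your induction on the nilpotency degree simply repackages this iteration, using the inductive hypothesis to pass from $P_1$ straight to $P_{l-1}$ and then one application of Theorem~\ref{nilimprovementThm} (plus dominated convergence) to reach $P_l$.
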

\begin{proof}
Let $P_i$ be the set of probability measures on $X$ which project to Haar measure on $X_i.$  Then
$P_1 \supset P_2 \supset \cdots \supset P_l = \{ m \}.$  By Theorem \ref{nilimprovementThm} we know that for $\mu \in P_i, i \geq 1,$ any subsequential limit of $T^n_\star \mu$ lies in the compact set $P_{i+1}.$  Equivalently $d(T^n_\star \mu, P_{i+1}) \to 0.$  Let $\theta$ be an invariant probability measure on $(P_1, T_\star).$  For any $\e>0$ there exists $n$ such that
\[ 1- \e < \theta(\{ \mu \in P_1 : d(T^n_\star \mu, P_2) < \e \} ) 
= \theta(\{ \mu \in P_1 : d(\mu, P_2) < \e \} ) \]
where the equality follows from invariance of $\theta$ and $P_2.$  We have 
\[ \theta(P_2) = \theta( \bigcap_{\e >0 } \{ \mu \in P_1 : d(\mu, P_2) < \e \} )
\geq \lim_{\e \to 0} 1-\e = 1. \]
Repeating this argument inductively yields $\theta(P_i) = 1$ for all $i$.  In particular $\theta(P_l) = \theta(\{ m \}) = 1.$  That is, $\theta = \delta_m.$
\end{proof}

\begin{proof} [Proof of Theorem \ref{nilThm}.]
Apply Corollary \ref{wktwistingCor} to the conclusion of Corollary \ref{nilweaktwistingCor}.
\end{proof}

\begin{proof} [Proof of Theorem \ref{nil2Thm}]
This is just a rephrasing of Corollary \ref{niltwistingCor}.
\end{proof}

\begin{question} \label{twistingRotationQ}
Can Corollary \ref{nilweaktwistingCor} be strengthened?  Are all minimal nilrotations twisting?
\end{question}


\section{Skew products} \label{skewproducts}

By a skew product we mean a system with space $X \times Y$ and transformation of the form $T(x,y) = (T_0(x), f(x,y)),$ where $T_0:X \to X$ is a homeomorphism and $f:X \times Y \to Y$ is continuous.  We say such a system has base $(X,T_0)$ and fiber $Y.$

Furstenberg proved in \cite{furstenberg} that a large class of skew products are uniquely ergodic.   He also discusses systems derived from an irrational circle rotation by repeatedly taking skew products.  In particular, he proves transformations on $\TT^d$ of the form
\[ T(x_1, \dots, x_d) = (x_1+ \alpha, x_2+f_1(x_1), \dots, x_d + f_{d-1}(x_1, \dots, x_{d-1})), \]
are uniquely ergodic when the $f_i$ are lipschitz and homotopically non-trivial.
We strengthen his result by showing that such systems are weakly-twisting, and when $d=2$ we show they are twisting.  

The goal of this section is to prove Theorems \ref{skewThm} and \ref{skew2Thm}.  Just as in the case of nilrotations, we will derive these theorems as corollaries of another theorem which tells us that when one takes weak$^\star$ limits of $T^n_\star \mu$ one gets measures with more invariance properties.

\begin{theorem}
Let $X=\TT^d$ and suppose $T:X \to X$ is of the form 
\[ T(x_1, \dots, x_d) = (x_1+ \alpha, x_2+f_1(x_1), \dots, x_d + f_{d-1}(x_1, \dots, x_{d-1})), \]
where $\alpha$ is irrational, $f_k$ is Lipschitz, and each $f_k(x_1, \dots, x_{k-1}, \cdot):\TT \to \TT$ is homotopically non-trivial.
If the projection of $\mu$ onto the first $d-1$ coordinates is absolutely continuous with respect to Haar measure on $\TT^{d-1}$ then any weak$^\star$ limit of $T^n_\star \mu$ is invariant under rotation in the last coordinate.
\label{skewimprovementThm}
\end{theorem}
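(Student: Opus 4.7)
The plan is to adapt the first (geometric) proof of Theorem \ref{nilimprovementThm}, with the last coordinate $\TT$ playing the role of the central subgroup $G_{l-1}$, the shifts $R_h(x_1,\ldots,x_{d-1},y)=(x_1,\ldots,x_{d-1}+h,y)$ in the $(d-1)$-th coordinate playing the role of elements $v\in G_{l-2}$, and a shearing identity replacing the commutator formula $[v,u^n]$. Writing $T^n(x,y)=(T_0^n x,\,y+\phi_n(x))$ with $\phi_n(x)=\sum_{j=0}^{n-1} f_{d-1}(T_0^j x)$, and using that $T_0$ commutes with $R_h$ (since $f_{d-2}$ does not depend on $x_{d-1}$), a direct computation gives
\[ T^n R_h \;=\; R_h\, T^n\, S_{g_h^{(n)}}, \qquad g_h^{(n)}(x)=\phi_n(x+h e_{d-1})-\phi_n(x), \]
and hence $R_{-h}\,T^n R_h T^{-n}=S_{g_h^{(n)}\circ T_0^{-n}}$, where $S_\gamma(x,y)=(x,y+\gamma(x))$ denotes a $y$-shift by the function $\gamma$. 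This plays the role of the commutator $[v,u^n]$ producing central elements in the nilmanifold setting.

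Fix $\varepsilon>0$. Partition the fiber $\TT$ into arcs $C_i'$ of diameter at most $\varepsilon$, pick $c_i'\in C_i'$, set $C_i=\TT^{d-1}\times C_i'$, and define $\Phi_i(x,y)=(x,c_i')$. Each $\Phi_i$ moves points only in the fiber direction, which is $T$-isometric, so $\|T^n_\star\mu-\sum_i T^n_\star(\Phi_i)_\star(\mu|_{C_i})\|_\star<\varepsilon$ for every $n$. The absolute-continuity hypothesis forces $(\Phi_i)_\star(\mu|_{C_i})$ to be absolutely continuous on its slice $\TT^{d-1}\times\{c_i'\}$ with density $\psi_i(x)=\psi(x)\mu_x(C_i')$, where $\psi = d(\pi_\star\mu)/dm$. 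Approximate each $\psi_i$ in $L^1$ by a Lipschitz density $\tilde\psi_i$ with $\sum_i\|\psi_i-\tilde\psi_i\|_1<\varepsilon$, and let $\tilde\nu_i$ be the resulting measure. Total variation is preserved under pushforward, and $\|\cdot\|_\star\le\|\cdot\|_{TV}$, so $\|T^n_\star\mu-\sum_i T^n_\star\tilde\nu_i\|_\star<2\varepsilon$ uniformly in $n$.

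Now fix $t\in\TT$ and set $h_n=t/(nD)$, where $D\neq 0$ is the degree of $f_{d-1}$ in its last variable. Lifting to $\RR$ and writing $F_h(x)=\tilde f_{d-1}(x+h e_{d-1})-\tilde f_{d-1}(x)$, an elementary integration gives $\int F_h\,dm=hD$. Furstenberg's theorem applies to the base $T_0$ (which has the same iterated-skew form on $\TT^{d-1}$), so $(\TT^{d-1},T_0)$ is uniquely ergodic and Birkhoff averages of continuous functions converge uniformly. Combining this with the first-order expansion $F_h(x)=h\,\partial_{d-1}f_{d-1}(x)+O(h^2)$ (valid directly for smooth $f_{d-1}$, and reduced to this case by smoothing for Lipschitz $f_{d-1}$), we obtain $\sup_x |g_{h_n}^{(n)}(x)-t|\to 0$ as $n\to\infty$, and likewise for $g_{h_n}^{(n)}\circ T_0^{-n}$. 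Applying the shearing identity to each $\tilde\nu_i$ yields
\[ (R_{-h_n})_\star(T^n)_\star(R_{h_n})_\star\tilde\nu_i \;=\; \bigl(S_{g_{h_n}^{(n)}\circ T_0^{-n}}\bigr)_\star(T^n)_\star\tilde\nu_i. \]
The left side converges in $\|\cdot\|_\star$ to $(T^n)_\star\tilde\nu_i$: the inner $(R_{h_n})_\star$ changes $\tilde\nu_i$ in total variation by at most the modulus of continuity of $\tilde\psi_i$ at $h_n$, and the outer $(R_{-h_n})_\star$ is a metric isometry differing from the identity by at most $h_n$. The right side converges in $\|\cdot\|_\star$ to $(S_t)_\star(T^n)_\star\tilde\nu_i$ by the uniform shear estimate. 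Summing the finitely many $i$ and combining with the approximation bound together with the isometry of $(S_t)_\star$ in $\|\cdot\|_\star$, we obtain $\|T^n_\star\mu-(S_t)_\star T^n_\star\mu\|_\star\to 0$. Since $t$ was arbitrary, any weak$^\star$ limit of $T^n_\star\mu$ is invariant under every $S_t$.

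The main obstacle is the uniform shear estimate $\sup_x|g_{h_n}^{(n)}(x)-t|\to 0$ under the simultaneous limit $h_n\to 0$, $n\to\infty$. For smooth $f_{d-1}$ it follows directly from $g_h^{(n)}=h\sum_j\partial_{d-1}f_{d-1}(T_0^j\cdot)+O(h^2 n)$ and uniform convergence of Birkhoff averages on the uniquely ergodic base; for Lipschitz $f_{d-1}$ it demands smoothing and careful error control since $\partial_{d-1}f_{d-1}$ is only bounded, not continuous. I do not anticipate using Theorem \ref{unipotentcocycleThm} for this one-step statement; it should enter only when Theorems \ref{skewThm} and \ref{skew2Thm} are deduced from Theorem \ref{skewimprovementThm} by iterating the improvement up the tower of factors.
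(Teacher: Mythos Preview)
Your route is genuinely different from the paper's. The paper perturbs in the \emph{first} coordinate by $\delta=t/(u_{1,d}\,n^{d-1})$ and controls the resulting displacement in the last coordinate via Theorem~\ref{unipotentcocycleThm} applied to the Jacobian cocycle $x\mapsto D_xT\in U$: the relevant entry $(D_xT^n)_{1,d}$ grows like $n^{d-1}$, and the unipotent ergodic theorem identifies the leading coefficient as a nonzero product of degrees. You instead perturb in the $(d-1)$-th coordinate by $h_n=t/(nD)$, exploiting the commutation $T_0R_h=R_hT_0$ so that the shear is a plain Birkhoff sum on the base. Your route is more elementary in that it bypasses Theorem~\ref{unipotentcocycleThm}; the paper's route is indifferent to which coordinate one perturbs, at the cost of invoking the multiplicative ergodic theorem.

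Your self-identified ``main obstacle'' is a real one for the argument as written, and the smoothing fix is fragile: approximating $f_{d-1}$ by smooth $\tilde f$ with $\|f_{d-1}-\tilde f\|_\infty<\delta$ gives only $|g_h^{(n)}-\tilde g_h^{(n)}|\le 2n\delta$, forcing $\delta=o(1/n)$, whereupon the modulus of continuity of $\partial_{d-1}\tilde f$ can blow up and you lose any uniform rate in the Birkhoff averages. The resolution is that you do not need the uniform estimate. Since your $\tilde\nu_i$ have bounded densities $\tilde\psi_i$, the $\|\cdot\|_\star$-distance between $(S_{g_{h_n}^{(n)}\circ T_0^{-n}})_\star T^n_\star\tilde\nu_i$ and $(S_t)_\star T^n_\star\tilde\nu_i$ is at most $\int|g_{h_n}^{(n)}(w)-t|\,\tilde\psi_i(w)\,dw$, so $L^1$ convergence $g_{h_n}^{(n)}\to t$ suffices. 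This holds directly for Lipschitz $f_{d-1}$: writing $G_h=F_h/h=\tfrac1h\int_0^h\partial_{d-1}f_{d-1}(\cdot+se_{d-1})\,ds$, one has $\|G_h-\partial_{d-1}f_{d-1}\|_{L^1}\to 0$ by $L^1$-continuity of translation, while $\tfrac1n\sum_j\partial_{d-1}f_{d-1}\circ T_0^j\to D$ in $L^1$ by the mean ergodic theorem on the (uniquely) ergodic base; since $T_0$ preserves Lebesgue, the two combine to give $\tfrac1n\sum_j G_{h_n}\circ T_0^j\to D$ in $L^1$, hence $g_{h_n}^{(n)}=(t/D)\cdot\tfrac1n\sum_j G_{h_n}\circ T_0^j\to t$ in $L^1$.

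One correction to your closing remark: the dependency is the reverse of what you anticipate. In the paper, Theorem~\ref{unipotentcocycleThm} is used precisely \emph{inside} the proof of Theorem~\ref{skewimprovementThm}, and Theorems~\ref{skewThm} and~\ref{skew2Thm} are then deduced from Theorem~\ref{skewimprovementThm} by the stratification argument of Corollaries~\ref{weaktwistingskewCor} and~\ref{twistingskewCor} with no further appeal to the unipotent cocycle theorem.
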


Notice that the statement that $x_k \mapsto f_k(x_1, \dots, x_k)$ is homotopically non-trivial is independent of the choice of $x_1, \dots, x_{k-1}$ since all such choices lead to homotopic loops.

\begin{corollary} \label{weaktwistingskewCor}
Any Lipschitz iterated skew product system $(X,T)$ (as in Theorem \ref{skewimprovementThm}) is weakly twisting.
\end{corollary}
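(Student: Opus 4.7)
The plan is to follow the proof of Corollary \ref{nilweaktwistingCor} essentially line-for-line, replacing the lower-central-series factors $X_i$ with the coordinate projections $\pi_k:X\to\TT^k$ onto the first $k$ coordinates. Each factor system $(\TT^k,T_k)$ is itself a Lipschitz iterated skew product of the same form but in fewer coordinates. Let $P_k\subseteq P(X)$ be the set of probability measures whose projection to $\TT^k$ is Haar measure $m_k$. Each $P_k$ is closed, convex and $T_\star$-invariant (the last because $(\TT^k,T_k)$ is uniquely ergodic with $m_k$ as its invariant measure), and we obtain the chain
\[
P_1\supseteq P_2\supseteq\cdots\supseteq P_d=\{m\},
\]
exactly analogous to the one in Corollary \ref{nilweaktwistingCor}.

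For each $1\leq k\leq d-1$, I would apply Theorem \ref{skewimprovementThm} to the factor system $(\TT^{k+1},T_{k+1})$. The Lipschitz and homotopical non-triviality hypotheses on the $f_j$ depend only on the first $j$ coordinates, so they are inherited by this subsystem. For $\mu\in P_k$, the pushforward $(\pi_{k+1})_\star\mu$ is a measure on $\TT^{k+1}$ whose further projection to $\TT^k$ is Haar, so the theorem guarantees that every weak$^\star$ limit of $(T_{k+1})^n_\star(\pi_{k+1})_\star\mu$ is invariant under rotation in coordinate $k+1$. Since $T_\star$-invariance and closedness of $P_k$ ensure these limits still project to Haar on the first $k$ coordinates, rotation-invariance in coordinate $k+1$ forces them to be Haar on $\TT^{k+1}$. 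Equivalently, every weak$^\star$ limit of $T^n_\star\mu$ in $P(X)$ lies in $P_{k+1}$, i.e.\ $d(T^n_\star\mu,P_{k+1})\to 0$.

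From here the Cesaro-style argument of Corollary \ref{nilweaktwistingCor} runs unchanged: if $\theta$ is any $T_\star$-invariant probability measure on $P_1$ with $\theta(P_k)=1$, then for any $\e>0$, invariance of $\theta$ and $P_{k+1}$ under $T_\star$ yields
\[
\theta(\{\mu:d(\mu,P_{k+1})<\e\})=\theta(\{\mu:d(T^n_\star\mu,P_{k+1})<\e\})>1-\e
\]
for $n$ large enough, hence $\theta(P_{k+1})=1$. Iterating gives $\theta(\{m\})=1$, so $(P_1,T_\star)$ is uniquely ergodic. Since $\TT^1$ is an equicontinuous factor of $(X,T)$, functoriality of the maximal equicontinuous factor $K$ gives $P_1(X,T)\subseteq P_1$, and unique ergodicity on the larger set $P_1$ implies unique ergodicity on $P_1(X,T)$; thus $(X,T)$ is weakly twisting. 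There is no genuine obstacle beyond the nilrotation case; the only small bookkeeping point is the observation that weak$^\star$ limits commute with the continuous pushforward $(\pi_{k+1})_\star$, which is immediate.
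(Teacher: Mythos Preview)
Your proposal is correct and follows essentially the same approach as the paper's own proof, which explicitly states that it mirrors Corollary~\ref{nilweaktwistingCor} by stratifying $P_1(X)$ into the sets $P_i$ of measures projecting to Lebesgue on the first $i$ coordinates and using Theorem~\ref{skewimprovementThm} inductively to show any invariant $\theta$ on $P_1$ satisfies $\theta(P_i)=1$ for all $i$. Your extra care in noting that Theorem~\ref{skewimprovementThm} is applied to the factor system $(\TT^{k+1},T_{k+1})$, and in verifying $P_1(X,T)\subseteq P_1$ via functoriality of $K$, simply makes explicit what the paper leaves implicit.
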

\begin{proof}
The proof is the same as that of Corollary \ref{nilweaktwistingCor}.  We stratify $P_1(X)$ into sets $P_i$ consisting of measures which, when projected onto the first $i$ coordinates, give Lebesgue measure.  By induction we use Theorem \ref{skewimprovementThm} to conclude that if $\theta$ is an invariant measure on $P_1(X)$ then $\theta(P_i) = 1$ for all $i.$  In particular $\theta(P_d) = \theta(\{ m \}) = 1,$ so $\theta = \delta_m.$
\end{proof}

\begin{corollary} \label{twistingskewCor}
Any Lipschitz skew product $(x,y) \mapsto (x+ \alpha, y+f(x))$ on $\TT^2$ with $\alpha$ irrational and $f$ homotopically non-trivial is twisting.
\end{corollary}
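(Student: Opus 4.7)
The plan is to derive this corollary as a direct specialization of Theorem \ref{skewimprovementThm} to the case $d=2$, in exactly the same spirit as the derivation of Corollary \ref{niltwistingCor} from Theorem \ref{nilimprovementThm}. The first step is to identify the maximal equicontinuous factor of $(X,T)$ as the base rotation $(\TT, x \mapsto x + \alpha)$; since $f$ is homotopically non-trivial, the work of Furstenberg cited earlier ensures that no further equicontinuous structure survives, so $P_1(X,T)$ consists precisely of those probability measures on $\TT^2$ whose projection onto the first coordinate is Lebesgue measure.

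Next, fix $\mu \in P_1(X,T)$, and let $\theta$ be an arbitrary weak$^\star$ subsequential limit of $T^n_\star \mu$. Since the first-coordinate marginal of $\mu$ is Lebesgue, in particular absolutely continuous with respect to Haar measure, Theorem \ref{skewimprovementThm} with $d=2$ applies and tells us that $\theta$ is invariant under rotation in the second coordinate. Moreover, because $T$ intertwines the first-coordinate projection with the Lebesgue-preserving rotation $x \mapsto x+\alpha$, every $T^n_\star \mu$ has Lebesgue first marginal, hence so does $\theta$.

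The last step is a short disintegration argument: writing $\theta = \int_\TT \theta_x \, dx$ for the disintegration of $\theta$ over the first coordinate, invariance of $\theta$ under rotation in $y$ forces each conditional $\theta_x$ to be a rotation-invariant probability measure on the fiber $\{x\} \times \TT$, and therefore to equal Haar measure. Hence $\theta$ is the product Haar measure $m$ on $\TT^2$. Since every weak$^\star$ subsequential limit of $T^n_\star \mu$ equals $m$ and $P(X)$ is compact metrizable, we conclude $T^n_\star \mu \to m$, which is the definition of twisting.

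The only mild subtlety is identifying the maximal equicontinuous factor correctly so that the relevant $P_1$ matches the hypothesis of Theorem \ref{skewimprovementThm}; after that, the argument is a routine combination of the theorem, disintegration, and compactness, with no genuine obstacle.
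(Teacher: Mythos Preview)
Your proof is correct and follows essentially the same approach as the paper, which simply states that the corollary ``follows immediately from Theorem \ref{skewimprovementThm} and the definition of twisting''; you have merely filled in the details the paper leaves implicit. One small remark: you do not actually need to identify the maximal equicontinuous factor \emph{precisely} as the base rotation---since $(\TT, x \mapsto x+\alpha)$ is an equicontinuous factor of $(X,T)$, it is automatically a factor of $K(X,T)$, hence every $\mu \in P_1(X,T)$ already projects to Lebesgue on the first coordinate, and this containment is all that the argument requires.
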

\begin{proof}
This follows immediately from Theorem \ref{skewimprovementThm} and the definition of twisting.
\end{proof}

As in the nilrotation case, Theorems \ref{skewThm} and \ref{skew2Thm} are trivial observations given Corollaries \ref{weaktwistingskewCor} and \ref{twistingskewCor} and the results of Section \ref{twisting}.  We leave this to the reader.

Before setting out to prove Theorem \ref{skewimprovementThm}, we first need an ergodic theorem for functions taking values in the upper unipotent group $U$ (Theorem \ref{unipotentcocycleThm}.)  For this we require lemmata.

\begin{lemma} \label{unipowerLem}
Suppose $u \in U$ and $\lambda = \lambda(j-i)$ is as in Theorem \ref{unipotentcocycleThm}.  Then
\[ \lim_{n \to \infty} \left| \theta_{1/n} (u^n)_{i,j} - \lambda u_{i,i+1} u_{i+1,i+2} \cdots u_{j-1,j} \right| = 0, \]
for all $i,j.$
\end{lemma}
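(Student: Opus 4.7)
The plan is to expand $u^n$ via the binomial theorem and isolate the leading order term. Write $u = I + N$ where $N = u - I$ is strictly upper triangular. Since $I$ commutes with $N$, one has
\[ u^n = \sum_{k=0}^{d-1} \binom{n}{k} N^k, \]
a finite sum because $N$ is nilpotent of index at most $d$.

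The key combinatorial observation, easily verified by induction on $k$, is that $(N^k)_{i,j}$ equals a sum over strictly increasing chains $i = l_0 < l_1 < \cdots < l_k = j$ of products $N_{l_0,l_1} \cdots N_{l_{k-1},l_k}$. In particular $(N^k)_{i,j} = 0$ unless $k \leq j - i$, and when $k = j - i$ the only admissible chain is $l_s = i + s$, giving
\[ (N^{j-i})_{i,j} = u_{i,i+1} u_{i+1,i+2} \cdots u_{j-1,j}. \]

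Combining these facts yields $(u^n)_{i,j} = \sum_{k=0}^{j-i} \binom{n}{k} (N^k)_{i,j}$. Since $\binom{n}{k}$ is a polynomial in $n$ of degree $k$, multiplying by $(1/n)^{j-i}$ kills every term with $k < j-i$ in the limit and leaves $\binom{n}{j-i}/n^{j-i} \to 1/(j-i)!$ times $u_{i,i+1} \cdots u_{j-1,j}$. This gives the lemma with $\lambda(j-i) := 1/(j-i)!$.

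I do not anticipate a substantive obstacle: the argument is a purely algebraic binomial expansion together with the elementary asymptotic $\binom{n}{k} \sim n^k/k!$. The only point requiring a moment of care is identifying the unique strictly increasing chain of length $j-i$ from $i$ to $j$. The value $1/(j-i)!$ obtained here is presumably exactly the constant $\lambda(j-i)$ later shown to appear in Theorem \ref{unipotentcocycleThm}, which is consistent because the conclusion of that theorem should specialize to the present lemma when $(X,T)$ is the trivial one-point system and $f \equiv u$.
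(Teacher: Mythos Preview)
Your proof is correct, but it proceeds differently from the paper's.  The paper argues by induction on $j-i$, writing $(u^n)_{i,j} = (uu^{n-1})_{i,j}$ and telescoping to obtain
\[
(u^n)_{i,j} = \sum_{m=0}^{n-1} \sum_{k=i+1}^j u_{i,k}(u^m)_{k,j},
\]
from which the inductive hypothesis yields that $(u^n)_{i,j}$ is a polynomial in $n$ of degree at most $j-i$ with the stated leading coefficient.  Your approach via $u=I+N$ and the binomial expansion $u^n=\sum_k \binom{n}{k}N^k$ is more direct and has the pleasant bonus of making the constant $\lambda(j-i)=1/(j-i)!$ completely explicit; the paper's induction determines $\lambda$ only through the recurrence $\lambda(k)=\lambda(k-1)/k$.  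Both arguments establish the same intermediate fact that $(u^n)_{i,j}$ is a polynomial in $n$ of degree at most $j-i$, which is what the paper actually invokes later (in the proof of Lemma~\ref{unilimitLem}).  Your route is cleaner here; the paper's recursive viewpoint perhaps fits more naturally with the cocycle manipulations that follow, but there is no real loss either way.
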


\begin{proof}
We will prove by induction on $j-i$ that when $j-i \geq 0, (u^n)_{i,j}$ is a polynomial of degree at most $j-i$ in $n$ with the coefficient on $n^{j-i}$ equal to $\lambda u_{i,i+1} u_{i+1,i+2} \cdots u_{j-1,j}$ (from which the claim immediately follows.)  When $j-i = 0$ we have an empty product and the result is obvious.  For larger $j-i$
\begin{align*}
(u^n)_{i,j} &= (u u^{n-1})_{i,j} = \sum_{k=i}^j u_{i,k} (u^{n-1})_{k,j} \\
&= (u^{n-1})_{i,j} + \sum_{k=i+1}^j u_{i,k} (u^{n-1})_{k,j} \\
&= (u^{n-2})_{i,j} + \sum_{k=i+1}^j u_{i,k} (u^{n-2})_{k,j}  + \sum_{k=i+1}^j u_{i,k} (u^{n-1})_{k,j} \\
&\ \vdots  \\
&=  \sum_{m=0}^{n-1} \sum_{k=i+1}^j u_{i,k} (u^m)_{k,j} \\
&= \sum_{m=0}^{n-1} \left( u_{i,i+1} (u^m)_{i+1,j} + \sum_{k=i+2}^j u_{i,k} (u^m)_{k,j} \right).
\end{align*}
Write $P(m)$ for expression in parentheses. By our inductive hypothesis, the inner sum is a polynomial (in $m$) of degree strictly smaller than $j-i-1.$  Also by our inductive hypothesis, $u_{i,i+1} (u^m)_{i+1,j}$ is a polynomial of degree at most $j-i-1$ with coefficient on $n^{j-i-1}$ equal to $\lambda(j-i-1)u_{i,i+1} u_{i+1,i+2} \cdots u_{j-1,j}.$    So $P(m)$ inherits this property.  It follows that 
\[ (u^n)_{i,j} = \sum_{m=0}^{n-1} P(m) \]
is a polynomial of degree at most $j-i$ with coefficient on $n^{j-i}$ equal to $\lambda(j-i) u_{i,i+1} u_{i+1,i+2} \cdots u_{j-1,j}.$
\end{proof}

\begin{lemma} \label{unilimitLem}
For any $\e>0$ and $M>0$ there exists $\delta>0$ such that the following holds: Suppose
$\{ u,  u_n : n=1,2,\dots \}$ is a bounded subset of $U$ with all entries on the first super-diagonal bounded by $M.$ Also suppose that for any $n,$ all super-diagonal entries $(u_n)_{i,i+1}$ differ from $u_{i,i+1}$ by at most $\delta,$ then
\[ \limsup_{n \to \infty} \left|  \theta_{1/n}( u_1 u_2 \cdots u_n)_{i,j} - \theta_{1/n}(u^n)_{i,j} \right| < \e. \]
for all $i,j.$
\end{lemma}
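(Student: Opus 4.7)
The plan is to expand both $u_1 u_2 \cdots u_n$ and $u^n$ entry-wise using the standard matrix product formula, and compare the expansions term-by-term. For any upper unipotent matrices $A_1, \ldots, A_n \in U$, since $(A_m)_{k,k} = 1$ and $(A_m)_{k,k'} = 0$ for $k > k'$, we have
\[
(A_1 \cdots A_n)_{i,j} = \sum_{r=0}^{j-i} \ \sum_{\substack{i = k_0 < k_1 < \cdots < k_r = j \\ 1 \leq m_1 < m_2 < \cdots < m_r \leq n}} \prod_{s=1}^{r} (A_{m_s})_{k_{s-1}, k_s},
\]
where $r$ counts the number of strict ``jumps'' along the path from $i$ to $j$. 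I would apply this both to $(u_1 \cdots u_n)_{i,j}$ and to $(u^n)_{i,j}$ and then split by chain length $r$.

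For $r < j - i$, the inner sum has at most $\binom{n}{r} = O(n^r)$ terms, and each term is bounded by $C^r$ where $C$ is the uniform bound on the $u_n$. After applying $\theta_{1/n}$ (which multiplies the $(i,j)$-entry by $n^{-(j-i)}$), these contributions are $O(n^{r - (j-i)}) \to 0$ as $n \to \infty$, in both expansions. Hence these terms can be discarded from both sides without affecting the limit.

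For $r = j - i$, there is exactly one admissible chain $(k_0, \ldots, k_r) = (i, i+1, \ldots, j)$, all of whose steps are super-diagonal. This is the only regime where the hypothesis on super-diagonal proximity of the $u_n$ to $u$ is used. By a telescoping estimate,
\[
\left| \prod_{s=1}^{r} (u_{m_s})_{k_{s-1}, k_s} - \prod_{s=1}^{r} u_{k_{s-1}, k_s} \right| \leq r (M+\delta)^{r-1} \delta,
\]
uniformly in the $m_s$. Summing over the $\binom{n}{r}$ choices of $m_1 < \cdots < m_r$ and multiplying by the dilation factor $n^{-r}$, the contribution of the $r = j - i$ term to $\theta_{1/n}(u_1 \cdots u_n)_{i,j}$ differs from the corresponding contribution to $\theta_{1/n}(u^n)_{i,j}$ by at most
\[
\binom{n}{r} n^{-r} \cdot r (M+\delta)^{r-1} \delta \ \longrightarrow\ \frac{(M+\delta)^{r-1}}{(r-1)!}\,\delta.
\]
Since $r \leq d-1$ is bounded by the size of the matrix and $M$ is fixed, one can choose $\delta = \delta(\e, M, d)$ small enough that this limit is less than $\e$ for all $(i,j)$ simultaneously.

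The only real obstacle is organizational: the sum-over-paths expansion makes the structural content clear, but one must be careful that the lower-order ($r < j-i$) error bound uses the uniform boundedness of $\{u, u_n\}$ in \emph{all} entries (not just the super-diagonal), while the leading-order ($r = j-i$) error bound uses only the super-diagonal closeness, which is what the hypothesis provides. As a byproduct, the argument also pins down the constant $\lambda(r) = 1/r!$ appearing in Lemma \ref{unipowerLem}, since the common leading term after dilation is $\binom{n}{r}n^{-r}\prod_{k=i}^{j-1} u_{k,k+1} \to \frac{1}{r!}\prod_{k=i}^{j-1} u_{k,k+1}$.
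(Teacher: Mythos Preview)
Your proof is correct and takes a genuinely different route from the paper's. The paper writes $u_n = u + \delta_n$, expands $(u+\delta_1)\cdots(u+\delta_n) - u^n$, and then uses a domination trick: after conjugating by a diagonal $\pm 1$ matrix so that $u$ has non-negative super-diagonal, it replaces each $\delta_k$ by a single dominating matrix $\Delta$ (with $u_{i,i+1}+\delta$ on the first super-diagonal and the uniform bound $M'$ elsewhere), reducing the estimate to $((u+\Delta)^n - u^n)_{i,j}$, which is handled by Lemma~\ref{unipowerLem}. Your path-expansion argument is more elementary and self-contained: it avoids both the sign-conjugation step and the reliance on Lemma~\ref{unipowerLem}, and it makes the asymptotic constant $\lambda(r) = 1/r!$ explicit rather than implicit. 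The paper's approach, on the other hand, packages the combinatorics into a single matrix inequality and reuses the prior lemma, which is tidier once that lemma is in hand.

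One small remark: when you say ``the inner sum has at most $\binom{n}{r}$ terms,'' remember that for $r < j-i$ there is also a sum over the $\binom{j-i-1}{r-1}$ admissible chains $i = k_0 < \cdots < k_r = j$; this factor is bounded by $2^{d-1}$ independently of $n$, so your $O(n^r)$ conclusion stands, but it is worth making the constant explicit.
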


\begin{proof}
By Lemma \ref{unipowerLem} we may as well assume $u_{i,j} = 0$ when $j-i \geq 2$ since this does not change the asymptotic behavior of the entries of $u^n.$  Notice that the conclusion we are trying to derive is invariant under conjugation by a diagonal matrix.  So we may conjugate everything by some diagonal matrix having only $1,-1$ on the diagonal to ensure that $u$ has non-negative entries on the first super-diagonal.  Let $M'/2$ be the universal upper bound on the entries of $u,u_n.$  Write $u_n = u + \delta_n$ where $\delta_n$ is strictly upper triangular, with $|(\delta_n)_{i,i+1}| < \delta$ and $|(\delta_n)_{i,j}| < M'.$  Now consider the matrix
\[ u_1 u_2 \cdots u_n - u^n= (u + \delta_1)(u + \delta_2) \cdots (u + \delta_n) - u^n. \]
Multiplying out the right side of this equation yields another instance of $u^n$ which cancels with the $-u^n$ to leave a sum of products of $u$ with the $\delta_i.$  The entries of $v$ can only increase if we replace $\delta_k$ by $| \delta_k |$ (where $| \delta_k |_{i,j} := | (\delta_k)_{i,j} |.$)  Let $\Delta$ be the strictly upper triangular matrix having $u_{i,i+1} + \delta$ on the first super-diagonal and $M'$ on all other super-diagonals.  This matrix has entries no smaller than the entries of any $| \delta_i |.$  Therefore
\[
(u_1 u_2 \cdots u_n - u^n)_{i,j}
\leq ((u+|\delta_1 |) \cdots (u + |\delta_n |) - u^n)_{i,j}
\leq ((u+\Delta)^n - u^n)_{i,j}.
\]
Applying Lemma \ref{unipowerLem} again yields
\[
\limsup_{n \to \infty} \frac{1}{n^{j-i} } (u_1 u_2 \cdots u_n - u^n)_{i,j} \leq \lambda \prod_{k=i}^{j-1} (u_{k,k+1} + \delta) - \lambda \prod_{k=i}^{j-1} u_{k,k+1}.
\]
A symmetric calculation gives the same estimate for the other difference $u^n-u_1 u_2 \cdots u_n.$  Therefore
\[
\limsup_{n \to \infty} \frac{1}{n^{j-i} } |(u_1 u_2 \cdots u_n - u^n)_{i,j}| \leq \lambda (M+\delta)^{d-1} - \lambda M^{d-1},
\]
which can be made as small as we like by requiring $\delta$ to be sufficiently small.
\end{proof}

In order to simplify notation, we use the langauge of {\it cocyles.}
By a cocycle on an invertible dynamical system taking values in a group $G,$ we mean a function $C:X \times \ZZ \to G$ with the property that $C(x,n+m) = C(T^m x, n)C(x,m).$  One can easily extend this definition by replacing $\ZZ$ with any group or semi-group.  With more complicated groups acting, the structure of a cocycle may be somewhat constrained.  With that integers acting (as in our definition), any map $f:X \to G$ yields a cocycle: when $n \geq 0$ we let
\[ ÊC(x,n) = f(T^{n-1}x) \cdots f(Tx) f(x) \text{ \ and \ } C(x,-n) = C(x,n)^{-1}. \]
One immediately sees that all cocycles arise in this way by taking $f(x) = C(x,1).$

\begin{proof}[Proof of Theorem \ref{unipotentcocycleThm}]
Assume $m$ is ergodic.   Unpacking the definition of $\theta_t,$ we see that the goal is to demonstrate the validity of the limit
\[
\lim_{n \to \infty} \frac{1}{n^{j-i}} C(x,n)_{i,j} = \lambda \prod_{k=i}^{j-1} \, \int_X f(x)_{k,k+1} dm.
\]

Fix $\e>0.$  Let $M = \sup_{i,y} | f(y)_{i,i+1} |,$ and let $\delta$ be given by Lemma \ref{unilimitLem}.
Choose $N$ sufficiently large that
\[ \left| \frac{1}{N} \sum_{n=0}^{N-1} f(T^n x)_{i,i+1} - \int_X f_{i,i+1} dm \right| < \delta. \]
for all $i$ and for all $x$ in a set $E$ with $m(E) > 1- \e^2.$

Notice that for any $x \in X,$
\[
\frac{1}{N} \sum_{n=0}^{N-1} f(T^n x)_{i,i+1}
= \frac{1}{N} C(x,N)_{i,i+1} = ( \theta_{1/N} C(x,N))_{i,i+1}
\]
With this in mind, let $v_k = v_k(x) = \theta_{1/N}C(T^{kN}x, N)$ and let $u \in U$ be the matrix with ones on the diagonal, $u_{i,i+1} = \int f_{i,i+1} dm,$ and zeros elsewhere.

Although we are assuming $(X,\mB, m, T)$ is ergodic, $(X, \mB, m, T^N)$ may not be.  Let $m = \int \mu_x dm(x)$ be the associated ergodic decomposition.  We also have the decomposition $m = \int N^{-1}( \mu_x + \mu_{Tx} + \cdots \mu_{T^{N-1}x}) dm(x). $  But
\[ T^\star( N^{-1}( \mu_x + \cdots \mu_{T^{N-1}x}) ) - N^{-1}( \mu_x + \cdots \mu_{T^{N-1}x}) = N^{-1}( \mu_{T^N x} - \mu_x), \]
which is zero $m$-almost everywhere.  We have written the $T$-ergodic measure $m$ as a convex combination of $T$-invariant measures, so the decomposition must be degenerate.  That is $m = N^{-1}( \mu_x + \mu_{Tx} + \cdots \mu_{T^{N-1}x})$ $m$-almost everywhere.  Let $x_0, x$ be two points for which this happens.  Any set which has positive measure for $\mu_x$ also has positive measure for $m = N^{-1}( \mu_{x_0} + \cdots \mu_{T^{N-1}x_0})$, and so must have positive measure for some $\mu_{T^l x_0}.$  But, distinct ergodic measures must be mutually singular.  So, it must be that $\mu_x = \mu_{T^l x_0}$ for some $l.$

Since $\e^2 > m(X \sm E) = \int \mu_x(X \sm E) dm(x),$ there is a set $Y \subset X$ with $m(Y) \geq 1-\e$ such that for $x \in Y, \mu_x(X \sm E) < \e$ (by the Chebychev inequality.)
By removing at most an $m$-null set from $Y$ we may assume that for every $x \in Y$ there exists $0 \leq l < N$ such that $\mu_x = \mu_{T^l x_0}$ (this follows from the previous paragraph.)  Fix $x \in Y.$
By the point-wise ergodic theorem, the set $S := \{ k : T^{kN} x \notin E \}$ has density $d(S) = \mu_{T^l x_0}(X \sm E) < \e.$  Therefore, when $K$ is sufficiently large, we know that $|S \cap [0,K) | < \e K. $

Define a new sequence $u_k$ by letting $u_k = v_k$ when $k \notin S$ (when $T^{kN}x \in E$) and let $u_k = u$ otherwise.
In the expression
\[ \theta_{1/N}C(x,KN) = v_0 v_1 \cdots v_{K-1}, \]
rewrite $v_k$ as $u - (u-v_k)$ for each $k \in S.$  Multiplying out the resulting expression yields $u_0 u_1 \cdots u_{K-1}$ together with many summands which we will study presently.

Each of these summands is a product of matrices and each matrix is either a $v_k$ or it is of the form $u-v_k.$
Notice that $u-v_k$ is a strictly upper triangular matrix.  Let $L = 2 \sup_{y,i,j} | \theta_{1/N} C(y,N)_{i,j}|.$  Let $A \in U$ be the matrix with $A_{i,j} = L$ when $i<j$ and let $B = A-1.$  Then
\[ |(v_k)_{i,j}| \leq A_{i,j} \text{ \ and \ } |(u-v_k)_{i,j}| \leq B_{i,j}. \]
So we can overestimate the size of the entries of a summand by replacing every $v_k$ by $A$ and every $u-v_k$ by $B.$  Conveniently, $A$ and $B$ commute and $B^d = 0.$  So
\begin{align*}
| (v_0 \cdots v_{K-1})_{i,j} - (u_0 \cdots u_{K-1})_{i,j}|
& \leq \sum_{n=1}^{d-1} \binom{|S|}{n} ( A^{K-n} B^n )_{i,j}.
\end{align*}
There is a constant $L'$ depending only on $L$ and $d$ such that for all $n,$ every entry of $B^n$ is bounded by $L'.$  So,
\[ ( A^{K-n} B^n )_{i,j} \leq \sum_{k = i}^{j-n} (A^{K-n})_{i,k} L'. \]
Now we apply Lemma \ref{unipowerLem} (the proof if not the statement) to see that $(A^{K-n})_{i,k}$ is bounded by a polynomial (in $K$) of degree at most $k-i$ with coefficients depending only on $f$ and $d.$  Adding these up, we see that the expression above is bounded by a polynomial $p_{i,j,n}(K)$ of degree at most $j-n-i$, again, with coefficients only depending on $f$ and $d$.  Finally, we have
\begin{align*}
| (v_0 \cdots v_{K-1})_{i,j} - (u_0 \cdots u_{K-1})_{i,j}|
& \leq \sum_{n=1}^{d-1} \binom{\e K}{n} p_{i,j,n}(K), 
\end{align*}
which is a polynomial of degree
\[ \max \{ \deg \binom{\e K}{n} + \deg p_{i,j,n}(K) : 1 \leq n \leq d-1 \} \leq n + (j-n-i) = j-i, \]
with $\e$ dividing the leading coefficient.  Write $\e p(K)$ for this polynomial.

  Notice that the matrix $u$ and the sequence $u_k$ satisfy the hypotheses of Lemma \ref{unilimitLem}.  Therefore
\begin{align*}
& \limsup_{K \to \infty} \left| \frac{1}{K^{j-i}} (u_0 u_1 \cdots u_{K-1})_{i,j} - \frac{1}{(KN)^{j-i}}(u^{KN})_{i,j} \right| \leq  \\
& \limsup_{K \to \infty} \frac{1}{K^{j-i} } \left| (u_0 u_1 \cdots u_{K-1})_{i,j} - (u^K)_{i,j} \right| 
+ \left| \frac{1}{K^{j-i}}(u^K)_{i,j} - \frac{1}{(KN)^{j-i}} (u^{KN})_{i,j} \right| \\
& = \limsup_{K \to \infty} \frac{1}{K^{j-i} } \left| (u_0 u_1 \cdots u_{K-1})_{i,j} - (u^K)_{i,j} \right| + 0  < \e,
\end{align*}
where the second summand in the middle of this calculation vanishes because, by Lemma \ref{unipowerLem}, it is a difference of two expressions having the same limit.

Let $M'$ be larger than the absolute value of any matrix entry of $C(y,l)$ or $u^l$ where $l$ ranges over $[0, N-1]$ and $y$ ranges over $X.$  
By Lemmas \ref{unipowerLem} and \ref{unilimitLem}, we can find a polynomial $q(n)$ of degree at most $j-i-1$ which gives an upper bound for
$|C(x,n)_{i',j'}|$ for all $n$ and all $j'-i' < j-i.$ and also for $|u^n_{i',j'}|.$  It now follows from simply writing out the multiplication
\begin{align*}
C(x, KN+l) = C(T^{KN}x, l) C(x, KN), \text{ \ \ that } \\
| C(x, KN+l)_{i,j} - C(x, KN)_{i,j}| < d q(KN)M'.
\end{align*}
The same argument gives us $|u^{KN+l}_{i,j} - u^{KN}_{i,j}| < d q(KN)M'.$
Finally, for $K$ sufficiently large, we have
\begin{align*}
&\left| C(x,KN+l)_{i,j} - (u^{KN+l})_{i,j} \right| \leq \\
&\left| C(x,KN+l)_{i,j} - C(x,KN)_{i,j} \right| 
+ \left| C(x,KN)_{i,j} - N^{j-i}(u_0 u_1 \cdots u_K)_{i,j} \right| \\
&+  \left| N^{j-i} (u_0 u_1 \cdots u_K)_{i,j} - (u^{KN})_{i,j} \right|
+ \left| (u^{KN})_{i,j} - (u^{KN+l})_{i,j} \right|  \\
&< d q(KN)M' + N^{j-i} \e p(K) + \e (KN)^{j-i} + d q(KN)M' \\
&= \e \left[ N^{j-i} p(K) + (KN)^{j-i} \right] +  2 d q(KN)M'.
\end{align*}
Recall that $\deg q < j-i$ and $\deg p \leq j-i.$  So, we have a polynomial in $K$ of degree at most $j-i$ with $\e N^{j-i}$ dividing the coefficient on $K^{j-i}.$  Therefore, if
we divide this whole expression by $(KN+l)^{j-i}$ and take a limit, we get
\begin{align*}
\limsup_{KN+l \to \infty} \frac{1}{(KN+l)^{j-i}} \left| C(x,KN+l)_{i,j} - (u^{KN+l})_{i,j} \right|  < c \e,
\end{align*}
where $c$ is some constant depending only on $f$ and $d.$  Now we apply Lemma \ref{unipowerLem} and conclude that
for all $x$ in a set $Y$ of measure at least $1-\e,$
\begin{align*}
\limsup_{n \to \infty} \left| \frac{1}{n^{j-i}} C(x,n)_{i,j} - \lambda \prod_{k=i}^{j-1} \, \int_X f(x)_{k,k+1} dm \right| < c \e.
\end{align*}
Since $\e>0$ was arbitrary, in fact, the limit is zero almost everywhere.  This completes the proof of the ergodic case of the Theorem.

If $m$ is not ergodic then let $m = \int m_x dm(x)$ be its ergodic decomposition.  Let $X'$ be the set of all $x \in X$ for which $\theta_{1/n}C(x,n)$ converges.  Regrettably, we must show this set is measurable.  Chose a countable dense set $\{w_k : k \in \NN \}$ in $U.$  Then the measurable set
\[ X'(k, \e) := \bigcup_{N=1}^\infty \bigcap_{n = N}^\infty \{ x \in X: \theta_{1/n}C(x, n) \in B_\e(w_k) \} \]
consists of all $x \in X$ for which $\theta_{1/n}C(x, n)$ eventually always lies in the $\e$-ball around $w_k.$  Now
\[ X' = \bigcap_{n=1}^\infty \bigcup_{k=0}^\infty X'(k, 1/n) \]
is measurable as claimed.

By the ergodic case, $m_x(X') = 1,$ so
\begin{align*}
m(X') &= \int m_x(X') dm(x) = \int 1 dm = 1,
\end{align*}

Finally, for almost every $x$, we have computed the exact value of $f^*(x)$ and it depends only on $m_x.$  Since $m_{Tx} = m_x$ for $m$-almost every $x,$ we have $f^* \circ T = f^*$ almost everywhere. 
\end{proof}

The proof of Theorem \ref{ueunipotentcocycleThm} is easier and similar to that of Theorem \ref{unipotentcocycleThm}.  So, we only sketch it here.

\begin{proof}[Proof of Theorem \ref{ueunipotentcocycleThm}]
Fix $\e>0.$  Let $M = \sup_{i,y} | \theta_{1/N} C(y,N)_{i,i+1} |,$ and choose $\delta$ as in Lemma \ref{unilimitLem}.
Choose $N$ sufficiently large that
\[ \left| \frac{1}{N} \sum_{n=0}^{N-1} C(T^n x, 1)_{i,i+1} - \int_X C(y,1)_{i,i+1} dm(y) \right| < \delta. \]
for all $i$ and for all $x \in X.$  Let $x$ be any point at all, and let $u_k = \theta_{1/N}C(T^k x, N).$  Let $u$ be as in the proof of Theorem \ref{unipotentcocycleThm}.  Now we can skip most of the difficulties and apply Lemma \ref{unilimitLem} to $u$ and $u_k.$  Use the same argument as in Theorem \ref{unipotentcocycleThm} to finish the proof.
\end{proof}

A short comment about coordinates:  In $\RR$ or in $\RR^d,$ there is only one reasonable way to dilate: one must multiply by a scalar.  But in $U$ this is not the case.  The one parameter family $\theta_t$ of dilations we used is in no way canonical.  For instance, we could define $\theta'_t(u) = v \theta_t( v^{-1}uv) v^{-1},$ for some $v \in U.$  In other words, we could conjugate $\theta_t$ by an inner automorphism of $U.$  This results in nothing but a change of coordinates.  So, obviously, Theorem \ref{unipotentcocycleThm} works just as well with $\theta'$ in the different coordinate system.  Assuming we have an ergodic system we get
\[ \lim_{n \to \infty} \theta'_{1/n} ( f ( T^{n-1} x) \cdots f(T^2 x) f (Tx) f(x) )_{i,j} = \lambda \prod_{k=i}^{j-1} \, \int_X (v^{-1} f v)_{k,k+1} dm. \]

\begin{proof}[Proof of Theorem \ref{skewimprovementThm}.]
The strategy of this proof is similar to the geometric proof of Theorem \ref{nilimprovementThm}.  However, in place of the full $G$ action on $X,$ which is available to us in that Theorem, we employ Theorem \ref{unipotentcocycleThm}.

Suppose $\mu$ projects to Haar measure on $\TT^{d-1}.$  Fix $\e >0$ and define a partition $\{ C_i \}_i$ of $\TT^d \sim [0,1)^d$ by $(x_1, \dots, x_d) \in C_i$ if $i\e \leq x_d < (i+1)\e.$  Let $Q_i = \{ (x_1, \dots, x_{d-1}, i\e) : x_i \in \TT \} \subset C_i$ and let $\Phi_i:C_i \to Q_i$ be the projection given by $\Phi_i( x_1, \dots, x_{d-1}, x_d) = (x_1, \dots, x_{d-1}, i\e).$  Write $\mu_i$ for the pushforward of $\mu|_{C_i}$ under $\Phi_i$ and let $\mu' = \sum_i \mu_i.$  

Notice that $\mu$ and $\mu'$ have the same projection onto the first $(d-1)$-coordinates.  Assemble the maps $\Phi_i$ into one map $\Phi$ by setting $\Phi|_{C_i} = \Phi_i.$  Then $\mu' = \Phi_\star \mu.$  Since $T^n \Phi$ and $T^n$ are point-wise $\e$-close,
$ \| T^n_\star \mu - T^n_\star \mu' \|_\star < \e.$

Write $\lambda$ for Lebesgue measure on $\TT^{d-1}$ and $\lambda_i$ for Lebesgue measure on the $(d-1)$-dimensional tori $Q_i.$  Fix $j$ and suppose $N_j \subset Q_j$ is a $\lambda_j$-null set.  Let $N = \pi(N_j)$ and let $N_i = ( \pi|_{Q_i})^{-1}(N)$ where $\pi$ is projection onto the first $(d-1)$-coordinates.  Since $\pi_\star(\mu)$ is absolutely continuous with respect to Lebesgue measure, and since $\lambda(N) = 0,$ we must have
\[
0 = \pi_\star \mu (N) = (\pi_\star \sum_i \mu_i)(N) = \sum_i \mu_i ( \pi^{-1} N)
= \sum_i \mu_i (N_i).
\]
It follows that $\mu_j(N_j) = 0.$  We have proven that each $\mu_i$ is absolutely continuous with respect to $\lambda_i.$  This allows us to write $d\mu_i = \varphi'_i d \lambda_i$ where $\varphi'_i$ is a measurable function on $Q_i.$
Choose a continuous function $\varphi_i$ on $Q_i$ which agrees with $\varphi_i'$ on the set $S_i \subseteq Q_i$ and write $d\nu = \sum_i \varphi_i d \lambda_i.$  Make this choice so that $0 \leq \varphi_i \leq 1$ and
$ \sum_i \lambda_i (S_i) > 1- \e. $  It follows that for all $n$
\[
\| T^n_\star \mu - T^n_\star \nu \|_\star
\leq \| T^n_\star \mu - T^n_\star \mu' \|_\star + \| T^n_\star \mu' - T^n_\star \nu \|_\star  < 2\e.
\]
Choose $\e'>0$ such that $d(x,y) < \e'$ implies $|\varphi_i(x) - \varphi_i(y)| < \e^2$ for all $i.$

Since the skewing maps $f_k$ are Lipschitz, so is $T.$  By Rademacher's theorem the derivative $D_xT$ (the Jacobian of $T$ at $x$) exists almost everywhere.
The structure of the map $T$ tells us that, when it exists, $D_xT$  lies in $U$ (we need to use the ordered basis $(\partial/\partial x_d, \dots, \partial/\partial x_1).$)  Therefore $x \mapsto D_x T$ is a bounded measurable function and we can apply Theorem \ref{unipotentcocycleThm} to conclude $\theta_{1/n} (D_x T^n) = \theta_{1/n}( D_{T^{n-1}x} T \circ \cdots \circ D_xT)$ converges almost everywhere to some matrix $u \in U$ with
\[ u_{i,j} = \lambda \prod_{k=i}^{j-1} \, \int_X (D_x T)_{k,k+1} dm \text{ \ for \ } j \geq i \geq 1 \text{ \ and \ } \lambda = \lambda(i,j) > 0. \]
The assumption  on the loops $\gamma_k(x_k) := f_k(x_0,x_1, \dots, x_k)$ implies that for each $k$ the integral of $(D_xT)_{k,k+1}$ with respect to Lebesgue measure on $\TT^d$ is a nonzero integer.  Indeed, each $\gamma_k$ is a map $\TT \to \TT$ and so represents a class $[ \gamma_k ] \in \pi_1(\TT)$ which is isomorphic to $\ZZ$ with isomorphism given by $[ \gamma ] \mapsto \int \gamma' dm_\TT$ (if we think if $\gamma'$ as taking values in $\RR.$)  Since the homotopy class of $\gamma_k$ is independent of the choice of $x_0, \dots, x_{k-1}$ we see that $\int (D_xT)_{k,k+1} dm$ is again this nonzero integer.  

We have shown that for every $j \geq i, D_x(T^n)_{i,j} /n^{j-i}$ tends uniformly to some non-zero number.  Assume $\e$ was chosen sufficiently small that each of these non-zero numbers is greater than $\e$ in absolutely value.  Fix $t>0.$  We will show that when $n$ is large $T^n_\star \nu$ is nearly invariant under rotation by $t$ in the last coordinate.
Write $R(x_1, \dots, x_d) = (x_1, \dots, x_d + t)$ and $S(x_1, \dots, x_d) = S(x_1+ \delta, \dots, x_d)$ where $\delta$ will be determined later.  Assume $\delta < \e'.$  Then
\begin{align*}
\| T^nS_\star \nu - T^n_\star \nu \|_\star
&\leq  \sum_i \| T^n_\star( |\varphi_i \circ S - \varphi_i |d \lambda_i ) \|_\star \\
&\leq  \sum_i \| |\varphi_i \circ S - \varphi_i |d \lambda_i  \| \\
&\leq  \sum_i \e^2 < 2 \e
\end{align*}
(since there are approximately $1/\e$ indices $i.$)

Fix $(x_1, \dots, x_d)$ and let $\gamma(s) = (x_1(1-s) + (x_1+\delta)s, x_2, \dots, x_d)$ be the linear curve connecting $(x_1, \dots, x_d)$ to $S(x_1, \dots, x_d).$  Then
\[
T^n S(x_1, \dots, x_d)  = T^n(x_1, \dots, x_d) + \int_0^1 D_{\gamma(s)} T^n
\left( \begin{array}{c} 0 \\ \vdots \\ 0 \\ \delta \end{array} \right) ds.
\]
Let $\delta = \delta(n) = t/( u_{1,d} n^{d-1}).$  Then
\[
\lim_{n \to \infty} D_{\gamma(s)} T^n \left( \begin{array}{c} 0 \\ \vdots \\ 0 \\ \delta \end{array} \right)
= \lim_{n \to \infty} t \left( \begin{array}{c}
(u_{1,d} n^{d-1})^{-1} (D_{\gamma(s)} T^n)_{1,d} \\
\vdots \\
(u_{1,d} n^{d-1})^{-1} (D_{\gamma(s)} T^n)_{d,d}
\end{array} \right)
= \left( \begin{array}{c} t \\ 0 \\ \vdots \\ 0 \end{array} \right)
\]
So, by taking $n$ sufficiently large and $\delta$ as above, we can ensure that the maps $T^n S$ and $R T^n$ are point-wise $\e$-close.  This implies that
$\| RT^n_\star \nu - T^nS_\star \nu \|_\star < \e. $  Combining results yields
\[
\| RT^n_\star \nu - T^n_\star \nu \|_\star \leq \| RT^n_\star \nu - T^nS_\star \nu \|_\star
+ \| T^nS_\star \nu - T^n_\star \nu \|_\star < 3\e.
\]
Combining yet more results: $\| RT^n_\star \mu - T^n_\star \mu \|_\star $
\begin{align*}
&\leq   \| RT^n_\star \mu - RT^n_\star \nu \|_\star
+ \| RT^n_\star \nu - T^n_\star \nu \|_\star + \| T^n_\star \nu - T^n_\star \mu \|  \\
&<   2\e + 3\e + 2\e.
\end{align*}
This completes the proof.
\end{proof}

\begin{question} \label{contskewQ}
Are there skew products on $\TT^2$ of the form $T(x,y) = (x + \alpha, y + f(x))$ which are uniquely ergodic but not weakly twisting, or weakly twisting but not twisting?  Necessarily, $f$ must be non-Lipschitz.
\end{question}

\begin{question} \label{lpunipotentcocycleQ}
In Theorem \ref{unipotentcocycleThm} we take $f:X \to U$ to be bounded and measurable.  Instead, let us assume that its coordinate functions $f_{i,j}$ lie in $L^p(X).$
For what values of $p$ does the theorem still hold?
\end{question}


\section{Skew products with expansive fibers}

In the proofs of Theorems \ref{nilimprovementThm} and \ref{skewimprovementThm}, we repeatedly exploit the fact that our system is an isometric extension of a simpler system.  This allows us to perturb our measures in the isometric direction to yield a new measure which does not deviate from the original measure under application of $T.$  Not all skew products are of this convenient form.  Even for skew products $(x,y) \mapsto (x + \alpha, 2y+f(x))$ (on the two dimensional torus) we see exponential growth in the $y$ direction.  In this case it is not reasonable to expect that such strong equidistribution results hold.

We will now show it is possible that even Lebesgue $\mu$ measure supported on a horizontal line $x \mapsto (x,y_0)$ may fail to equidistribute.  Even worse, for some choices of $f$, we will see that repeated application of $T$ yields measures that are never close to Haar measure.  The next theorem gives a very precise picture of what weak$^\star$ limits $\theta$ of $T^n_\star \mu$ look like in this case.  It decomposes $\TT^2$ into two sets.  On one set, $\theta$ restricts to Lebesgue measure and on the other $\theta$ is supported on a Lipschitz curve.

\begin{theorem} \label{pskewThm}
Let $X=\TT^2$ and let $T(x,y) = (x+ \alpha, p y+f(x))$ where $\alpha$ is irrational, $|p| \geq 2$ is an integer, and $f:\TT \to \TT$ is continuously differentiable.  
Let $\mu$ be a probability measure on $X,$ supported an the graph of a differentiable curve $x \mapsto (x, \gamma(x))$ whose projection $\pi_\star \mu$ onto the first coordinate is absolutely continuous with respect to Haar measure, and let $\theta$ be any weak$^\star$ limit of $T^n_\star \mu.$  Let
\[ S = \{ x \in \TT : p \gamma'(x) + \sum_{n=0}^\infty \frac{1}{p^n} f'(x+n\alpha) \neq 0 \}. \]
Then there exists some $\beta \in \TT$ such that $\theta |_{\pi^{-1}(\beta + S)}$ is invariant under vertical rotation.  In particular, if $\mu$ projects to Lebesgue measure on the ciricle, then $\theta |_{\pi^{-1}(\beta + S)}$ is Lebesgue. Furthermore, on each connected component of $\pi^{-1}(\TT \sm \beta +S), \theta$ is supported on the graph of a Lipschitz curve with Lipschitz constant $\frac{p}{p-1} \sup | f' |.$
\end{theorem}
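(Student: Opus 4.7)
The plan is to compute $T^n$ explicitly on the graph of $\gamma$, extract a derivative dichotomy, and identify $\theta$ on each piece. A direct induction gives $T^n(x,\gamma(x)) = (x+n\alpha,\gamma_n(x+n\alpha))$, where, setting $u = x'-n\alpha$,
\[ \gamma_n(x') = p^n\gamma(u) + \sum_{j=1}^n p^{j-1}f(x'-j\alpha), \qquad \gamma_n'(x') = p^{n-1}\Bigl(p\gamma'(u)+\sum_{m=0}^{n-1}p^{-m}f'(u+m\alpha)\Bigr). \]
Since $|p|\ge 2$, the bracketed quantity converges uniformly in $u$ to $h(u) := p\gamma'(u)+\sum_{m=0}^\infty p^{-m}f'(u+m\alpha)$, so $S = \{u : h(u)\neq 0\}$. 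I then pass to a subsequence $n_k$ with $T^{n_k}_\star\mu\to\theta$ weak$^\star$ and $-n_k\alpha\to\beta$ in $\TT$; consequently $u = x'-n_k\alpha \to x'+\beta$ uniformly in $x'$.

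On the open set $\beta+S$ one has $|\gamma_{n_k}'(x')|\sim|p|^{n_k-1}|h(x'+\beta)|\to\infty$. Writing $\chi$ for the density of $\pi_\star\mu$ and testing $T^{n_k}_\star\mu$ against a Fourier mode $\phi(x',y)=\phi_1(x')e^{2\pi i\ell y}$,
\[ \int\phi\,dT^{n_k}_\star\mu = \int \phi_1(x')\,e^{2\pi i\ell\gamma_{n_k}(x')}\,\chi(x'-n_k\alpha)\,dx'. \]
On any interval $I\seq\beta+S$ on which $|h(\cdot+\beta)|\ge\e$, a mollification of $\chi\phi_1$ followed by integration by parts against the oscillating factor yields a bound of order $|p|^{-(n_k-1)}/|\ell|$ for $\ell\neq 0$, so every non-trivial vertical Fourier coefficient of $\theta|_{\pi^{-1}(I)}$ vanishes. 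Covering $\beta+S$ by such intervals and letting $\e\downarrow 0$ shows $\theta|_{\pi^{-1}(\beta+S)}$ is vertically rotation invariant; the $\ell=0$ case identifies the $x$-marginal as $\chi(\cdot+\beta)\,dx'$, which is Lebesgue when $\pi_\star\mu$ is.

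On $\TT\sm(\beta+S)$, where $h(x'+\beta)=0$, substituting $p\gamma'(u)=-\sum_{m=0}^\infty p^{-m}f'(u+m\alpha)$ into the formula for $\gamma_n'(x')$ and using $u+n\alpha=x'$ produces a striking cancellation,
\[ \gamma_n'(x') = -\frac{1}{p}\sum_{l=0}^\infty p^{-l} f'(x'+l\alpha), \]
which is independent of $n$ and has modulus at most $\tfrac{1}{|p|-1}\sup|f'|\le\tfrac{p}{p-1}\sup|f'|$. Passing to a further diagonal subsequence so that $\gamma_{n_k}$ converges at one reference point in each connected component of $\TT\sm(\beta+S)$, the $\gamma_{n_k}$ converge uniformly on each such component to a Lipschitz function with the stated constant, whose graph then carries the restriction of $\theta$.

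The main obstacle is the vertical-invariance step, since $\chi$ is only $L^1$: one must justify the oscillatory integral bound against a non-smooth density and handle the transition regions where $|h(\cdot+\beta)|$ is small. The cleanest route is $L^1$-density approximation of $\chi$ by smooth functions, the Fourier-in-$y$ bound on each ``good'' sub-interval $\{|h(\cdot+\beta)|\ge\e\}$, and an $\e$-exhaustion of $\beta+S$; the remaining pieces — the Lipschitz estimate on the complement, the subsequence extractions, and the identification of the support of $\theta$ — are essentially formal given the explicit computation of $\gamma_n'$.
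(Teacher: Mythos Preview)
Your vertical-invariance argument on $\beta+S$ takes a genuinely different route from the paper's. You decompose in vertical Fourier modes and kill each nonzero mode by an oscillatory-integral bound using $|\gamma_{n_k}'|\to\infty$; the paper instead first reduces to the Lebesgue-projection case via Lusin's theorem, then partitions $\{|\tau|>\e\}$ into short intervals $[a_i,b_i)$ on which $\gamma_n$ wraps monotonically exactly once around $\TT$, and directly compares $\nu_n(R)$ with $m(R)$ for rectangles $R=[a_i,b_i)\times[y_1,y_2]$ using the derivative bounds. Your Fourier approach is more conceptual and would generalize more readily; the paper's rectangle argument is more elementary and sidesteps entirely the issue you flag as the ``main obstacle'' (justifying stationary phase against a merely $L^1$ density).

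There is, however, a real gap in your treatment of the complement. Your ``striking cancellation'' substitutes $p\gamma'(u)=-\sum_{m\ge 0}p^{-m}f'(u+m\alpha)$, which is the relation $h(u)=0$ with $u=x'-n_k\alpha$; but on the fixed set $\TT\sm(\beta+S)$ you only know $h(x'+\beta)=0$. For finite $k$ these are different points, and the omitted term $p^{n_k-1}h(u_k)$ need not vanish---indeed it can diverge near the boundary of $\{h=0\}$---so the formula you obtain for $\gamma_{n_k}'(x')$ is not valid on the set where you apply it. Your identity \emph{is} correct on the moving set $n_k\alpha+(\TT\sm S)$, just not on the limit set. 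The paper avoids this by staying in the original $x$-coordinate, where $\TT\sm S$ is fixed and the bound $|\Delta_n(x)-p^{n-1}\tau(x)|\le\kappa$ gives $|\Delta_n(x)|\le\kappa$ for every $n$ simultaneously; Arzel\`a--Ascoli on $\TT\sm S$ then yields a uniformly convergent subsequence whose $\kappa$-Lipschitz limit, translated by $\beta$, carries $\theta|_{\pi^{-1}(\TT\sm(\beta+S))}$. Rewriting your argument in the $x$-variable, or restricting to compact subsets of the interior of each component (where eventually $u_k\in\TT\sm S$) and extending by continuity, closes the gap.
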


Notice that this theorem does not require that $f$ be homotopically nontrivial.

\begin{proof}
Write $\tau(x) := p\gamma'(x) + \sum_{n=0}^\infty p^{-n} f'(x+n\alpha).$
First observe that
\[ T^{n}(x,\gamma(x)) = (x+n \alpha,  p^n \gamma (x) + \sum_{k=0}^{n-1} p^{n-1-k}  f(x+k \alpha) ). \]
If we take the derivative of the $y$ coordinate we get
\[ \Delta_n(x) := p^n \gamma'(x) + \sum_{k=0}^{n-1} p^{n-1-k}  f'(x+k \alpha). \]
Therefore $|\Delta_n(x) - p^{n-1} \tau(x)| \leq \frac{p}{p-1} \sup | f' |.$   This proves that $|\Delta_n(x)| \to \infty$ when $x \in S$ and is bounded by $\frac{p}{p-1} \sup | f' |$ otherwise.  Write $\kappa = \frac{p}{p-1}\| f' \|.$  
In particular, $T^n(x, \gamma(x))$ is $\kappa$-Lipschitz on $\TT \sm S.$

Let $\theta$ be a subsequential limit of $T^n_\star \mu$ and pass to a further subsequence along which $n \alpha$ converges to some point $\beta \in \TT$ and $T^n(x, \gamma(x))$ converges point-wise to some function $x \mapsto (x+\beta, \eta(x)).$  From now on $n$ will always represent an element of this subsequence.   Point-wise convergence of functions on a compact set, all satisfying the same Lipschitz condition, is necessarily uniform convergence.  This proves that $\theta |_{\pi^{-1}(  \TT \sm \beta+S )}$ is supported on the curve $(x+ \beta, \eta(x))|_{\TT \sm S}.$  Furthermore, the limiting curve $\eta$ is necessarily $\kappa$-Lipschitz on the connected components of $\TT \sm S.$ 

It remains to be shown that $\theta|_{\pi^{-1}(\beta + S)}$ is invariant under vertical rotation.
To make things simpler,  instead of $T^n_\star \mu,$ we will study the push-forward $\nu_n$ of Lebesgue measure on $S$ by the map $x \mapsto (x, \gamma_n(x)),$ where
\[
\gamma_n(x) := p^n \gamma (x-n\alpha) + \sum_{k=0}^{n-1} p^{n-1-k}  f(x+k \alpha - n\alpha)
\]
The only difference between $T^n_\star \mu$ and $\nu_n$ is a horizontal translation. 
  
We assumed that $\mu$ projects to some measure absolutely continuous with respect to Lebesgue measure on $\TT.$  It suffices to consider the case where $\mu$ projects to Lebesgue measure, since, if $\rho$ is a continuous, vertically invariant function on $\TT^2, \lim  \rho d \nu_n = \rho \lim d\nu_n,$ and by Lusin's Theorem, the density function for $\mu$ is continuous on a set of as large measure as we wish.

Fix $\e>0.$ Let $S' = \{ x : | \tau(x) | > \e \}$ and assume $\e$ is small enough that the Lebesgue measure of $S'$ is within $\e$ of the Lebesgue measure of $S.$ Choose $\delta>0, \delta < \e$ such that  for any $x_1, x_2 \in S'$ with $d(x_1,x_2) < \delta$ we have $|\tau(x_1) - \tau(x_2) | < \e^2.$  Choose $N$ such that $p^N \e \delta > 2,$ and also such that
\[
\frac{(p^{N-1} (\e + \e^2) + \kappa)}{(p^{N-1}(\e - \e^2) - \kappa)}
\]
and it's reciprocal are both within $3\e$ of $1$ (this is possible when $\e < 1/3.$)

Let $n>N.$
We will construct a partition of $S'$ into intervals $[a_i, b_i)$ together with an exceptional set $E.$  Since $S'$ is open, it is union of disjoint open intervals.  Ignore all intervals having length less than $\delta.$  Each remaining interval we write as a union of as many contiguous intervals $[a,b)$ as possible with the property that $d(a,b) < \delta$ and such that, on each $[a,b), \gamma_n$
is monotonic, and $\gamma_n(a) = \gamma_n(b) = 1$ (this is possible because of the first assumption on the size of $N.$)  Furthermore, assume $\gamma_n(c) \neq 1$ for all $a < c < b.$  Write $\{ [a_i, b_i) : i \in I \}$ for this collection of intervals.  Let $E=S - \bigcup_i [a_i, b_i).$  If $\delta$ is sufficiently small than $\mu(E) < \e.$  Put more succinctly, we break $S'$ into intervals on which $\gamma_n$ wraps once, monotonically around $\TT$ together with an exceptional set $E$ consisting of `scraps'.

 Consider any rectangle of the form $R := [a_i,b_i) \times [y_1, y_2].$  This rectangle is intersected by the curve $(x, \gamma_n(x))$ in exactly one arc.  To avoid unnecessary cases, let's assume that $\gamma_n$ is increasing on $[a_i,b_i)$ (if $\gamma_n$ is decreasing the argument is analogous.)  Notice that $\gamma_n|_{[a_i, b_i)}$ traverses $[y_1, y_2]$ exactly once.  Let $a_i < c_1 < c_2 < b_i$ be such that $\gamma_n(c_1) = y_1$ and $\gamma_n(c_2) = y_2.$   Now we estimate: for $x \in [a_i,b_i),$
\begin{align*}
p^{n-1} \tau (x) - \kappa  &<  \gamma'_n(x) <  p^{n-1} \tau(x) + \kappa \\
(p^{n-1} (\tau(a_i) - \e^2) - \kappa) &<  \gamma'_n(x) <  (p^{n-1} (\tau(a_i) + \e^2) + \kappa).
\end{align*}
Integrating over the intervals $[a_i, b_i)$ and $[c_1, c_2)$ yields
\EQ
(p^{n-1}(\tau(a_i) - \e^2) - \kappa) (b_i-a_i) <  & 1 & < (p^{n-1} (\tau(a_i) + \e^2) + \kappa)(b_i-a_i) \\
(p^{n-1} (\tau(a_i) - \e^2) - \kappa)(c_2-c_1) < & y_2 - y_1 & < (p^{n-1} (\tau(a_i) + \e^2) + \kappa)(c_2-c_1)
\EQE
It follows that
\EQ
\frac{(p^{n-1} (\tau(a_i) - \e^2) - \kappa)(c_2-c_1)}{(p^{n-1} (\tau(a_i) + \e^2) + \kappa)(b_i-a_i)}
< & y_2 - y_1 &
< \frac{(p^{n-1} (\tau(a_i) + \e^2) + \kappa)(c_2-c_1)}{(p^{n-1}(\tau(a_i) - \e^2) - \kappa)(b_i-a_i)}. \\
\frac{(p^{n-1} (\e - \e^2) - \kappa)(c_2-c_1)}{(p^{n-1} (\e + \e^2) + \kappa)(b_i-a_i)}
< & y_2 - y_1 &
< \frac{(p^{n-1} (\e + \e^2) + \kappa)(c_2-c_1)}{(p^{n-1}(\e - \e^2) - \kappa)(b_i-a_i)}. \\
(1- 3\e)\frac{c_2-c_1}{b_i-a_i}
< & y_2 - y_1 &
< (1+3\e) \frac{c_2-c_1}{b_i-a_i}. \\
(1- 3\e) \nu_n(R)
< & m(R) &
< (1+3\e)  \nu_n(R),
\EQE
since $\nu_n(R) = c_2-c_1$ and $m(R) = (y_2-y_1)(b_i-a_i).$
If we let $f$ be any $1$-Lipschitz function on $\TT^2$ with $| f | < 1,$  we see that
\[ \left| \int_{\pi^{-1}S'} f d \nu_n - \int_{\pi^{-1}S'} f dm \right| < 3 \e + 2 \delta \text{ \ \ so, \ }
\| \nu_n|_{\pi^{-1}S'} - m|_{\pi^{-1}S'} \|_\star < 5 \e.  \]
Finally, since $\nu_n$ and $m$ both give $\pi^{-1}(S \sm S')$ measure at most $\e,$ we have $\| \nu_n|_{\pi^{-1}S} - m|_{\pi^{-1}S} \|_\star < 7 \e.$  Since this holds for all $n$ larger than $N,$ the theorem is proven.
\end{proof}

The next (trivial) Corollary gives an extremely non-optimal estimate on the size of $S.$  Its value is in providing an easy method for over-estimating the (failure of) equidistribution of the sequence $T^n_\star \mu$ for any differentiable skewing factor $f$ whatsoever.

\begin{corollary} \label{pskewCor1}
The set $S$ on which the conclusions of Theorem \ref{pskewThm} hold satisfies
\[ \mu(S) \geq 1 - m( \{ x : |f'(x) + p \gamma'(x)| <  \sup |f'| \frac{1}{p-1} \} ) =: 1- \beta. \]
In particular, if $\varphi$ is any continuous test function on $\TT^2$ and $\mu$ projects to Haar measure on the first coordinate then
\[ \limsup_{n \to \infty} \left| \int \varphi dm - \int \varphi \circ T^n d\mu \right| \leq \beta \sup | \varphi |. \]
\end{corollary}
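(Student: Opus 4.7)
The plan is to read both claims off Theorem~\ref{pskewThm} via an elementary estimate on the geometric series defining $S$.

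For the first inequality, the key observation is that the vanishing of the sum on $\TT \sm S$ lets one solve for $f'(x) + p\gamma'(x)$. Indeed, if $x \notin S$ then pulling the $n=0$ term to the left in the equation $p\gamma'(x) + \sum_{n\geq 0} p^{-n} f'(x+n\alpha) = 0$ yields
\[ f'(x) + p \gamma'(x) = -\sum_{n \geq 1} p^{-n} f'(x + n\alpha), \]
whose absolute value is bounded by $\sup|f'| \cdot \sum_{n \geq 1} p^{-n} = \sup|f'|/(p-1)$. Hence $\TT \sm S$ is contained in the set on the right-hand side of the claimed inequality, and the stated lower bound on $\mu(S)$ (which in the relevant case $\pi_\star \mu = m$ coincides with $m(S)$) follows.

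For the second inequality, the strategy is to fix a continuous $\varphi$ and let $\theta$ be any subsequential weak$^\star$ limit of $T^n_\star \mu$. Theorem~\ref{pskewThm} produces some $\beta_0 \in \TT$ for which $\theta$ equals Haar measure on the strip $A := \pi^{-1}(\beta_0 + S)$. Since $\pi_\star \theta = \pi_\star m = m$, both $\theta$ and $m$ assign mass $m(S) \geq 1 - \beta$ to $A$, and therefore mass at most $\beta$ to $A^c$. The contributions over $A$ then cancel in $\int \varphi \, d\theta - \int \varphi \, dm$, leaving a difference of two integrals over $A^c$ which is bounded by $\beta \sup |\varphi|$ (after subtracting a constant from $\varphi$, if one wants the sharpest version). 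Taking the $\limsup$ over $n$ and over the choice of subsequential limit gives the stated estimate.

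The main ``obstacle'' is essentially nil: this is pure bookkeeping on top of the harder Theorem~\ref{pskewThm}. The only point requiring minor care is that $\beta_0$ depends on the chosen subsequence, but since $m$ is rotation-invariant on $\TT$ this does not affect $m(A)$ or $m(A^c)$, so the bound on $|\int \varphi\, d\theta - \int \varphi\, dm|$ is uniform in the choice of $\theta$.
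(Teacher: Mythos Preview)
Your argument is correct and matches the paper's approach exactly: the first claim is the same triangle-inequality estimate on the tail $\sum_{n\geq 1} p^{-n} f'(x+n\alpha)$, and for the second claim the paper merely says ``trivial consequence of the first,'' which is precisely the splitting over $A=\pi^{-1}(\beta_0+S)$ and $A^c$ that you spell out. Your remark that $\beta_0$ depends on the subsequence but $m(A^c)=m(\TT\setminus S)$ does not is the right way to make the bound uniform over all subsequential limits.
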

\begin{proof}
If $x \notin S$ then $\tau(x) = 0$ (where $\tau$ is defined as in the proof of Theorem \ref{pskewThm}.)  A simple application of the triangle inequality yields
\[ |f'(x) + p \gamma'(x)|  \leq  \sum_{n=1}^\infty | p^{-n} f'(x+n\alpha) | \leq \sup |f'| \frac{1}{p-1}, \]
which proves the first claim.  The second claim is a trivial consequence of the first.
\end{proof}

\begin{corollary} \label{pskewCor2}
Let $(X,T),\gamma$ be as in Theorem \ref{pskewThm} and suppose $\mu$ projects to Lebesuge measure on the first coordinate.  If $f$ and $\gamma$ are analytic, then either $T^n_\star \mu$ tends to Lebesgue measure or $f(x) = \gamma(Tx) - p\gamma(x)$ plus some constant.
\end{corollary}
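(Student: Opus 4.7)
The plan is to apply Theorem \ref{pskewThm} and extract a dichotomy from analyticity of the function $\tau(x) := p\gamma'(x) + \sum_{n=0}^\infty p^{-n} f'(x+n\alpha)$ that appears in that theorem. First, since $f$ and $\gamma$ are real analytic, $f'$ extends to a bounded holomorphic function on some strip $\{|\mathrm{Im}\,z|<\delta\}$ containing $\TT$. The series defining $\tau$ is then dominated term-wise by $p^{-n}\|f'\|_\infty$, so it converges uniformly on that strip and its sum is holomorphic. Thus $\tau$ is real analytic on $\TT$, which means either $\tau\equiv 0$ or its zero set $\TT\setminus S$ is finite.

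Next I handle the case $\tau\not\equiv 0$, aiming to show $T^n_\star\mu\to m$. Let $\theta$ be any weak$^\star$ subsequential limit of $T^n_\star\mu$ and let $\beta\in\TT$ be the point produced by Theorem \ref{pskewThm} along the chosen subsequence. Because $\pi\circ T = R_\alpha\circ\pi$ and $\pi_\star\mu$ is Lebesgue, $\pi_\star(T^n_\star\mu)=m_\TT$ for every $n$, hence $\pi_\star\theta = m_\TT$. Since $\TT\setminus(\beta+S)$ is a finite set, $\theta\bigl(\pi^{-1}(\TT\setminus(\beta+S))\bigr)=0$. Theorem \ref{pskewThm} then identifies $\theta$ with Lebesgue on $\pi^{-1}(\beta+S)$, and putting these together gives $\theta=m$. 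Every subsequential limit equals $m$, so the whole sequence converges to $m$.

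Finally, in the remaining case $\tau\equiv 0$, I derive the coboundary relation by a direct manipulation of the series. Setting $g(x):=\sum_{n=0}^{\infty} p^{-n} f'(x+n\alpha)$ and shifting the index of summation yields $p^{-1}g(x+\alpha)=g(x)-f'(x)$, i.e.\ $f'(x)=g(x)-p^{-1}g(x+\alpha)$. The assumption $\tau\equiv 0$ means $g=-p\gamma'$, so $f'(x)=-p\gamma'(x)+\gamma'(x+\alpha)$. Integrating (and noting both sides are derivatives of well-defined $\TT\to\TT$ maps) produces $f(x)=\gamma(x+\alpha)-p\gamma(x)+c$ for some constant $c$, which is the desired formula $f=\gamma\circ T-p\gamma+c$.

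The one delicate point is the analytic-extension step in the first paragraph: without holomorphic control on $f'$ one could only assert $\tau$ is a uniform limit of analytic functions, not that it is itself analytic. Once analyticity of $\tau$ is secured, the rigid dichotomy between ``$\tau$ vanishes identically'' and ``$\tau$ vanishes on a finite set'' collapses the general conclusion of Theorem \ref{pskewThm} (where $S$ could a priori be a complicated measurable set) into the clean alternative stated in the corollary.
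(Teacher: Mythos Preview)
Your proof is correct and follows essentially the same approach as the paper: establish that $\tau$ is analytic, use the analytic dichotomy (zero set null vs.\ $\tau\equiv 0$), and in the second case unwind the series to get $f'(x)=\gamma'(x+\alpha)-p\gamma'(x)$. You supply more detail than the paper on why $\tau$ is genuinely analytic (the paper simply asserts it) and on why the first case forces $\theta=m$; your shift-the-index computation of $f'$ is equivalent to the paper's functional relation $\tau(x)=p\gamma'(x)+f'(x)+p^{-1}(\tau(Tx)-p\gamma'(Tx))$.
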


\begin{proof}
If $f$ and $\gamma$ are analytic, then so are $f, \gamma',$ and $\tau.$  If the set of zeros of $\tau$ is a null set, then by Theorem \ref{pskewThm} $T^n_\star \mu$ tends toward Lebesgue measure.  Otherwise, analyticity of $\tau$ implies $\tau \equiv 0.$  The relation
\[ \tau(x) =  p \gamma'(x) + f'(x) + p^{-1}( \tau(Tx) - p \gamma'(Tx)) \]
reduces to $f'(x) = \gamma'(Tx) - p\gamma'(x).$
\end{proof}

\begin{example}
Let $\mu$ be Lebesgue measure on the horizontal line $(x,y_0)$ and let $T(x,y) = (x+\alpha, 2y+f(x)).$  If $f$ is strictly monotonic then $|\tau(x)|>0.$  So, by Theorem \ref{pskewThm}, $T^n_\star \mu$ equidistributes.\end{example}

\begin{example} \label{nontrivialMeasureOnPONE}
Let $f$ be arbitrary and let $\gamma'$ agree with
\[ -\sum_{n=0}^\infty p^{-n-1} f'(x + n\alpha) \text{ \ on \ } [0,1/2] \]
and disagree elsewhere.  Then $S=(1/2,1).$  Let $\mu$ be the measure on the graph of $\gamma$ which projects to Lebesgue measure on the first coordinate.  Let $\theta$ be any weak$^\star$ limit of $T^n_\star \mu$ and apply Theorem \ref{pskewThm} to conclude that $\theta$ is equal to Lebesgue measure on a translate $\pi^{-1}(\beta + S)$ and is supported on a $\frac{p}{p-1} \sup | f' |$-Lipschitz function elsewhere.
The same is true of $T^n_\star \theta$ for all $n.$

Because of the shape of the support of $\theta$ on $\pi^{-1}(\beta + S),$ there is some constant $\e$ (depending only on the Lipschitz constant $\frac{p}{p-1} \sup | f' |$) such that one can always find a ball of radius $\e$ disjoint from the support.  It is easy to see that there is some smaller constant $\e'>0$ such that $\| T^n_\star \theta - m \|_\star > \e' $ for all $n.$  So we can choose a weak$^\star$ limit of 
$\frac{1}{N} \sum_{n=0}^{N-1} T^n_{\star \star} \delta_\theta = \frac{1}{N} \sum_{n=0}^{N-1} \delta_{T^n_\star \theta}$
to get an invariant measure $\eta \in P(P_1)$ which must be different from $\delta_m.$

Now take $f$ to be the function $f(x) = x.$  In this case, further analysis, reveals that on the complement of $\pi^{-1}( \beta + S), \theta$ is supported on a line with slope $\frac{p}{p-1}.$ $T^n_\star \theta$ also has this property.  So $T^n_\star \theta = (n \alpha, y_n) +\theta$ for some $y_n \in \TT$ (here, $(n \alpha, y_n)+ \theta$ represents pushforward of $\theta$ under addition of $(n \alpha, y_n)$.)  It follows that the orbit closure $Y := \overline{ \{ T^n_\star \theta : n \} }$ is homeomorphic to some closed subset of $\TT^2.$  In fact, suppose $(\beta, y_0)$ is one endpoint of the line which is the support of $\theta$ on the complement of $\pi^{-1}(\beta + S).$  Then $(Y,T_\star)$ is isomorphic as a topological system to the orbit closure of this point $\overline{ \{ T^n(\beta, y_0) : n \in \ZZ \} }$ with isomorphism given by $(x,y) \mapsto (x-\beta,y-y_0) + \theta.$

The system $(Y, T_\star)$ has, as a factor, the rotation by $\alpha$ on $\TT.$  The factor map is given by $Y \owns (x,y) + \theta \mapsto x.$  So, the measure $\eta \in P(Y) \subseteq P(P_1)$ we constructed above must project to the Lebesgue measure on the rotation factor.  This  provides an example of an invariant measure on $P_1$ which is not a convex combination of delta-masses at fixed points.
\end{example}

Acknowledgements:
We would like to thank Vitaly Bergelson for supervising this project and Manfred Einsiedler, who's simple demonstration that the curve $(x,x^2)$ equidistributes on $\TT^2$ provided the original inspiration.  At least as significantly we thank Hillel Furstenberg who's work heavily influenced the author (in particular his work on unique ergodicity of skew products \cite{furstenberg}.)  Additionally we wish to thank John Greismer, Cory Christopherson, Michael Bj\"orklund and especially Roger Z\"ust, with whom the author had many fruitful discussions during the writing process.  Most importantly, Sasha Leibman's patient reading revealed many non-trivial mistakes and contributed greatly to the quality and validity of this paper.

\
\bibliographystyle{amsplain}

\end{document}